\newcommand{\prd}{\mathcal{P}^p(\rd)}
\newcommand{\probspace}{(\Omega,\mathcal{F},\mathbb{P})}
\newcommand{\expect}{\mathbb{E}}
\newcommand{\pp}{\mathbb{P}}
\newcommand{\up}{\mathcal{U}^p}
\newcommand{\eup}[1]{\mathscr{U}^p_E[#1]}
\newcommand{\erp}[1]{\mathscr{V}^p_E[#1]}
\newcommand{\norm}[2]{\|#1\|_{#2}}
\newcommand{\xp}{\mathcal{X}^p}
\newcommand{\Lpspace}[4]{L^{#1}([#2,#3]\times \Omega;\mathcal{B}_\lambda([#2,#3])\otimes\mathcal{F},\lambda\otimes\pp;#4)}
\newcommand{\Lpspacedouble}[4]{L^{#1}([#2,#3]\times \Omega\times\Omega;\mathcal{B}_\lambda([#2,#3])\otimes\mathcal{F}\otimes\mathcal{F},\lambda\otimes\pp\otimes\pp;#4)}
\DeclareSymbolFontAlphabet{\mathbb}{AMSb}
\DeclareSymbolFontAlphabet{\mathbbl}{bbold}
\title{Pontryagin maximum principle for the deterministic mean field type  optimal control problem via the Lagrangian approach}
\author{Yurii Averboukh}\address{Krasovskii Institute of Mathematics and Mechanics, \\ & Yekaterinburg, Russia} \email{averboukh@gmail.com}
\author{Dmitry Khlopin}\address{Krasovskii Institute of Mathematics and Mechanics, \\ & Yekaterinburg, Russia}
\email{khlopin@imm.uran.ru}
\date{}
\begin{document}

	\maketitle
	
	\begin{abstract}
		We study  necessary optimality conditions for the deterministic mean field type free-endpoint optimal control problem. Our study relies on the Lagrangian approach that treats the mean field type control system as a crowd of infinitely many agents who are labeled by elements of some probability space. First, we derive the Pontryagin maximum principle in the Lagrangian form. Furthermore, we consider the Kantorovich and Eulerian formalizations which describe  mean field type control systems via distributions on the set of trajectories and nonlocal continuity equation respectively. We prove that  local minimizers in the Kantorovich or Eulerian formulations determine  local minimizers within the Lagrangian approach. Using this, we deduce the Pontryagin maximum principle in the Kantorovich and Eulerian forms. To illustrate the general theory, we examine a model system of mean field type linear quadratic regulator. We show that the optimal strategy in this case is determined by a linear feedback.
			\keywords{mean field type control, Pontryagin maximum principle, Lagrangian approach, Kantorovich approach, Eulerian approach, Pontryagin minimum}
			\msccode{49N80, 49K21, 49K15, 93C25, 34K27}
	\end{abstract}
	
\tableofcontents

\section{Introduction}
The main object of the paper is a system consisting of many identical agents who interacts via some external media and try to achieve a common goal. We study this system using the mean field approach that comes from  the statistical physics and examine the limit system where the number of agents tends to infinity. The latter can be regarded as a dynamical system in the space of probability measures. First, the mean field interacting dynamical systems appeared as models of plasma \cite{Vlasov, Vlasov_book} (see also \cite{McKean, Sznitman} for the mathematical theory of the mean field interacting systems). Recently, such models found applications in studies of crowds and flocks behavior, opinion dynamics, etc. \cite{Bellomo2012, Bullo2009, Colombo2011, Colombo2005, Cristiani2014}. 

The many agent systems with mean field interaction in the presence of controls can be treated in three ways. First, one can assume that each agent chooses their  control  to optimize  their own utility. This assumption leads to the mean field game theory proposed by Lasry, Lions \cite{Lions01, Lions02} and (independently) by Huang,  Malhame, Caines \cite{Huang5}. The second approach appears if we consider the many agent systems affected by one external control. Notice that this class includes systems where the control of each agent depends on their state in a smooth way. To see this, it suffices to consider a smooth profile of control as an external control. The mean field type control is a mixture of these aforementioned approaches. On one hand, it implies that each agent has their own control. On the other hand, the mean field type control theory assumes that the agents behave collectively to achieve a common goal. Equivalently, one can imagine a  central planner who tells the agents what to do in order to optimize some objective function.  Thus, the main object of mean field type control theory is a system of intellectual agents acting cooperatively. Moreover, since the agents move independently, one can expect in this case discontinuous dependences of controls and velocities on the agent's state.

The mean field type control theory inherited such problems as existence of optimal control, dynamic programming and necessary optimality conditions in the Pontryagin maximum principle form from the classical optimal theory. Papers \cite{ahmed_ding_controlld, Bahlali_Mezerdi_Mezerdi_existence} provide the existence of the mean field type optimal control. The dynamical programming principle and the representation of the value function of the mean field type control problem as the solution of a Bellman equation in the space of probability measures is discussed in  \cite{Badreddine_Frankowska,Bayraktar_Cosso_Pham_randomized, Bensoussan_Frehse_Yam_book, Bensoussan_Frehse_Yam_equation, Cavagnari_et_al_approaches, Marigonda_et_al_2015, Cavagnari_Marigonda_Piccoli1, Lauriere_Pironneau_DPP_MF_control, Pham_Wei_2015_DPP_2016, Pham_Wei_2015_Bellman}. The Pontryagin maximum principle for the stochastic mean field type control problem was derived \cite{Andersson_Djehiche_2011, Buckdahn_Boualem_Li_PMP_SDE}. Surprisingly,  the derivation of the necessary optimality conditions for the deterministic mean field type optimal control is more involved than this problem for the pure stochastic case. Nowadays, the Pontryagin maximum principle is obtained for the case when all agents are affected by the same control \cite{Bonnet2021, Pogodaev2016a} or when the control depends smoothly on the agent's state \cite{Bonnet2019,  Bonnet2019a}. The latter case, as mentioned above, can be reduced to systems with an external force if one regards the entire dependence of the control on the state variable as a new control.
Additionally,  paper \cite{Bongini2017} gives the Pontryagin maximum principle for a system consisting of finite-dimensional and mean field parts affected by the same external control.   Finally, the mean field type control theory raises its own questions. Among them  is the finite agent approximation problem \cite{Fornasier_Lisini_Orrieri_Savar,Fornasier_Solombrino, Gangbo2021, Lacker_limit} that provides the consistency of the mean field type control theory. 

Notice that the dynamic programming principle, Pontryagin maximum principle as well as finite agent approximations of the mean field type control problems require the technique of differential and sub-differential calculus in the space of probability measures. We refer to papers  \cite{Gangbo_Tudorascu, Master_cdll} for the detailed exposition of various approaches of this field.

The paper is concerned with the necessary optimality condition for a mean field type optimal control problem, where the evolution of each agent is driven by an ordinary differential equation.
We aims to derive the Pontryagin maximum principle for the general deterministic mean field type optimal control problem including, in particular, the case of unbounded control space. To this end, we use the Lagrangian approach \cite{Cavagnari_et_al_approaches} that implies the labeling of agents by elements of some probability space and, formally, reduces  the original problem to the certain control problem on the  space of functions.
Recall that the deterministic mean field type control problems can be also formalized within the Eulerian and Kantorovich approaches \cite{Cavagnari_et_al_approaches}.  

The Eulerian approach relies on the description of the evolution of the distribution of agents through the nonlinear continuity equation and regards the mean field type control problem as a control problem in the space of probability measures. In the case of mean field type control systems, the velocity field may be discontinuous. Notice that the Eulerian approach is even more natural for the case of systems affected by an external force.  It leads to a controlled continuity equation with regularity conditions on the velocity field.  Moreover, a  continuity equation with a control that smoothly depends on the agent's state can be treated as a system affected by an external control if one chooses the control space to be an appropriate class of functions of the state.

Finally, one can consider the mean field type optimal control problem as an optimization problem for  distributions on the set of curves under  constrain that these distributions are concentrated on the set of admissible curves. This idea leads to the Kantorovich approach. The equivalence between the Kantorovich and Eulerian approaches was proved in \cite[Theorem 1]{Jimenez2020} under the convexity assumption.  The value functions within all these aforementioned approaches coincide  under the same assumption \cite{Cavagnari_et_al_approaches}.

In this paper, we consider the deterministic mean field type optimal control problem with free-endpoint assuming that the dynamics and the payoff functions are continuously differentiable w.r.t. the state of each agent and the measure describing the distribution of all agents. We adopt the concept of intrinsic derivative w.r.t. probability measure proposed in \cite{Master_cdll}. The key result of the paper is the Pontryagin maximum principle for the Lagrangian formulation of the mean field type optimal control problem. In this case, the costate variable is described by a  process coupled with the original mean field type control process.  Furthermore, we extend the results of \cite{Cavagnari_et_al_approaches} and prove that the local minimizers within the Kantorovich and Eulerian approaches correspond to  local minimizers in the Lagrangian framework. Using this, we obtain the Pontryagin maximum principle for the Kantorovich and Eulerian approaches. In the latter case, the costate equation is replaced by the continuity equation both on state and costate variables. Additionally, we apply  the Pontryagin maximum principle in the Lagrangian framework to analyze  the mean field type linear-quadratic regulator. In this model, we assume that the motion of each agent is given by a linear differential equation while the payoff combines the averaged cost of the agents' controls and the terms describing the collective behavior of all agents. We show that the optimal control in this model problem can be chosen in the feedback form. Moreover, the control of each agent is determined by the mean state of all agents and the deviation of the agent's state from this  mean.

Notice that  originally the Pontryagin maximum principle was obtained as the necessary condition for  strong extrema \cite{Pontryagin1964a}. Later, it was shown that the Pontryagin maximum principle  corresponds to the more subtle notion of extremum called a Pontryagin extremum \cite{Dubovitskiui1967}. It lies between the strong and weak extrema. We follow this approach and extend the notion of Pontryagin extremum to the Lagrangian formulation of mean field type optimal control problem. As for the finite dimensional case, the Pontryagin maximum principle serves as a necessary condition for the Pontryagin minimizer. This reveals some similarities between finite-dimensional control systems and the Lagrangian formalization of mean field type control systems.

The paper is organized as follows. In Section~\ref{sect:preliminaries}, we introduce the general notation, the state and control spaces. Additionally, in that section we recall the definition of the intrinsic derivative w.r.t. measure variable. Section~\ref{sect:lagrangian} is concerned with the Lagrangian approach to the mean field type control systems. Here, in particular,  we introduce the concepts of strong and Pontryagin local minima for the Lagrangian formalization of the mean field type control problem. In Section~\ref{sect:PMP_Lagrangian}, we give the statement of the Pontryagin maximum principle in this case. The next two sections are concerned with the proof of this result. To this end, we study  spike variations of the mean field type optimal control processes within the Lagrangian approaches (see Section~\ref{sect:Spike}). In Section~\ref{sect:proof_PMP_Lagrangian}, we derive the costate equation, transversality and maximization conditions which constitute the Pontryagin maximum principle for the Lagrnagian formalization. The Kantorovich approach is examined in Section~\ref{sect:Kantorovich}. Here we study the relationship between  strong local extrema within the Kantorovich and Lagrangian frameworks and derive the Pontryagin maximum principle in the Kantorovich formulation. Using the same scheme, we show that  local Eulerian minimizers correspond to  Lagrangian ones and deduce the Eulerian version of the Pontryagin maximum principle in Section~\ref{sect:Euler}. Finally, Section~\ref{sect:MFT_LQR} provides the analytical study of the model mean field type linear quadratic regulator. 

\section{Preliminaries}\label{sect:preliminaries}
\subsection{General notation}
\begin{itemize}
	\item If ${\mathbbm{X}}_1,\ldots, {\mathbbm{X}}_n$ are sets, $i_1,\ldots,i_k$ are some indices from $\{1,\ldots,n\}$, then we denote by
	$\operatorname{p}^{i_1,\ldots,i_k}$ the natural projector from ${\mathbbm{X}}_1\times\ldots\times {\mathbbm{X}}_n$ onto ${\mathbbm{X}}_{i_1}\times\ldots\times {\mathbbm{X}}_{i_k}$, i.e.,
	\[\operatorname{p}^{i_1,\ldots,i_k}(x_1,\ldots,x_n)=(x_{i_1},\ldots,x_{i_k}).\]
	\item If $\mathbbm{X}$ is a set, $\Upsilon\subset \mathbbm{X}$, then $\mathbbm{1}_\Upsilon$ stands for the indicator function of the set $\Upsilon$.
	\item If $(\Omega',\mathcal{F}')$, $(\Omega'',\mathcal{F}'')$ are measurable spaces, $m$ is a probability on $\mathcal{F}'$, $h:\Omega'\rightarrow\Omega''$ is a $\mathcal{F}'/\mathcal{F}''$-measurable function, then we denote by $h\sharp m$ the push-forward measure that is the probability on $\mathcal{F}''$ defined by the rule: for $\Upsilon\in \mathcal{F}''$,
	\[(h\sharp m)(\Upsilon)\triangleq m(h^{-1}(\Upsilon)).\]
	\item If $(\Omega,\mathcal{F})$ is a measurable space, $m$ is a measure on $\mathcal{F}$, then $\mathcal{F}_{m}$ stands for the completion of $\mathcal{F}$ w.r.t. to the measure $m$. The extension of this measure onto $\mathcal{F}_m$ is still denoted by $m$.
	\item If $(\Omega,\mathcal{F},\pp)$ is a probability space, $({\mathbbm{X}},\rho_{\mathbbm{X}})$ is a metric space, while $g:\Omega\rightarrow {\mathbbm{X}}$ is a $\mathcal{F}/\mathcal{B}({\mathbbm{X}})$-measurable function, then we denote by $\mathbb{E}g$ the expectation of $g$ according to the probability $\pp$, i.e.,
	\[\mathbb{E}g\triangleq \int_\Omega g(\omega)\pp(d\omega).\]
	
	\item If $(\Omega,\mathcal{F})$ is a measurable space, $\mathbbm{Y}$ is a closed subset of a normed space $(\mathbbm{X},\norm{\cdot}{\mathbbm{X}})$, then we denote by $B(\Omega,\mathcal{F};\mathbbm{Y})$ the set of all $\mathcal{F}/\mathcal{B}(\mathbbm{Y})$-measurable functions from $\Omega$ to $\mathbbm{Y}$. If, additionally, $m$ is a measure on $\mathcal{F}$, $p\geq 1$, then we denote by $L^p(\Omega,\mathcal{F},m;{\mathbbm{Y}})$ the set of functions $g\in B(\Omega,\mathcal{F};{\mathbbm{Y}})$ such that
	\[\expect\|g\|^p=\int_\Omega\norm{g(\omega)}{{\mathbbm{X}}}^pm(d\omega)<+\infty.\] Notice that $L^p(\Omega,\mathcal{F},m;{\mathbbm{Y}})\subset L^p(\Omega,\mathcal{F},m;{\mathbbm{X}})$. The norm of an element $g\in L^p(\Omega,\mathcal{F},m;{\mathbbm{Y}})$ is given by
	\[\|g\|_{L^p}\triangleq  \bigg[\int_\Omega\|g\|_{\mathbbm{X}}^pm(d\omega)\bigg]^{1/p}.\]
	\item If $({\mathbbm{X}},\rho_{\mathbbm{X}})$, $({\mathbb{Y}},\rho_{\mathbb{Y}})$ are Polish spaces,  $C({\mathbbm{X}};{\mathbb{Y}})$ stands for the set of continuous function from ${\mathbbm{X}}$ to~${\mathbb{Y}}$. Furthermore, $C_b({\mathbbm{X}};{\mathbb{Y}})$ denotes the set of all continuous and bounded function. We will consider the usual $\sup$-norm on $C_b({\mathbbm{X}};{\mathbb{Y}})$. If ${\mathbb{Y}}=\mathbb{R}$, we omit the second argument. 
	\item If $({\mathbbm{X}},\rho_{\mathbbm{X}})$ is a Polish space, then $\mathcal{B}({\mathbbm{X}})$ denotes  the Borel $\sigma$-algebra on ${\mathbbm{X}}$. Moreover, if $m$ is a measure on $\mathcal{B}(X)$, then $\mathcal{B}_m(\mathbbm{X})$ is the $m$-completion of $\mathcal{B}(\mathbbm{X})$. 
	\item When, as above, $({\mathbbm{X}},\rho_{\mathbbm{X}})$ is a Polish space, we denote by $\mathcal{P}({\mathbbm{X}})$ the space of all Borel probabilities on it. We endow $\mathcal{P}({\mathbbm{X}})$ with the topology of narrow convergence. Recall that a sequence $\{m_n\}_{n=1}^\infty\subset \mathcal{P}({\mathbbm{X}})$ narrowly converges to $m$, iff, for every $\phi\in C_b({\mathbbm{X}})$,
	\[\int_{\mathbbm{X}}\phi(x)m_n(dx)\rightarrow \int_{\mathbbm{X}}\phi(x)m(dx)\text{ as }n\rightarrow\infty.\]
	\item If $(\Omega_1,\mathcal{F}_1)$ and $(\Omega_2,\mathcal{F}_2)$ are measurable  spaces,  then $\mathcal{F}_1\otimes\mathcal{F}_2$ is the product  $\sigma$-algebra, i.e., the $\sigma$-algebra generated by the family $\{\Upsilon_1\times\Upsilon_2:\, \Upsilon_1\in\mathcal{F}_1,\, \Upsilon_2\in\mathcal{F}_2\}$. Furthermore, if $m_1$ and $m_2$ are measures on $\mathcal{F}_1$ and $\mathcal{F}_2$ respectively, then $m_1\otimes m_2$ stands for the  product of measures defined by the rule: for every  $\Upsilon_1\subset \mathcal{F}_1$, $\Upsilon_2\subset \mathcal{F}_2$,
	\[(m_1\otimes m_2)(\Upsilon_1\times \Upsilon_2)\triangleq m_1(\Upsilon_1)\cdot m_2(\Upsilon_2).\] 
	\item When $(\Omega_1,\mathcal{F}_1)$, $(\Omega_2,\mathcal{F}_2)$ are measurable spaces, $m_1$ is a finite measure on $\mathcal{F}_1$ and   $h:\Omega_1\times\mathcal{F}_2\rightarrow \mathbb{R}$ is such that, for every $\omega_1 \in \Omega_1$, $h(\omega_1,\cdot)$ is a probability on $\mathcal{F}_2$, while, for each $\Upsilon_2\in\mathcal{F}_2$, the mapping $\Omega_1\ni\omega_1\mapsto h(\omega_1,\Upsilon_2)$ is measurable w.r.t. $\mathcal{F}_1$, we denote by $m_1\otimes (h(\omega_1))_{\omega_1\in\Omega_1}$ the measure $\mu$ on $\mathcal{F}_1\otimes\mathcal{F}_2$ such that, for every $\Upsilon_1\in\mathcal{F}_1$, $\Upsilon_2\in\mathcal{F}_2$,
	\[\mu(\Upsilon_1\times\Upsilon_2)=\int_{\Upsilon_1}h(\omega_1,\Upsilon_2)m_1(d\omega_1). \] The existence and uniqueness of such measure directly follows from \cite[Theorem 10.7.2]{Bogachev}. Moreover, if $\phi$ is measurable w.r.t. $\mathcal{F}_1\otimes\mathcal{F}_2$, then 
	\[\begin{split}
		\int_{\Omega_1\times\Omega_2}\phi(\omega_1,&\omega_2)(m_1\otimes (h(\omega_1))_{\omega_1\in\Omega_1})(d(\omega_1,\omega_2))\\&=\int_{\Omega_1}
		\int_{\Omega_2}\phi(\omega_1,\omega_2)h(\omega_1,d\omega_2)m_1(d\omega_1).\end{split} \]
	Notice that the direct product of measures appears when one chooses $h(\omega_1,\Upsilon_2)\triangleq m_2(\Upsilon_2)$.
	\item If $({\mathbbm{X}},\rho_{\mathbbm{X}})$ is a Polish space, $p\geq 1$, then we denote by $\mathcal{P}^p({\mathbbm{X}})$ the set of probability measures with the finite $p$-th moment, i.e., $m\in\mathcal{P}({\mathbbm{X}})$ lies in $\mathcal{P}^p({\mathbbm{X}})$ if, for some $x_*\in {\mathbbm{X}}$,
	\[\mathcal{M}_p^p(m)\triangleq \int_{\mathbbm{X}}(\rho(x,x_*))^pm(dx)<+\infty.\] If ${\mathbbm{X}}$ is Banach, we will choose $x_*=0$. Below, $\mathcal{M}_p(m)$ denotes the $p$-th root of $\mathcal{M}_p^p(m)$. 
	\item The space $\mathcal{P}^p({\mathbbm{X}})$ is endowed with the $p$-th Wasserstein metric defined by the rule: for $m',m''\in\mathcal{P}^p({\mathbbm{X}})$,
	\[\begin{split}
		W_p(&m',m'') \\&\triangleq\inf\bigg\{\left[\int_{{\mathbbm{X}}\times {\mathbbm{X}}}\big(\rho_{\mathbbm{X}}(x',x'')\big)^p\pi(d(x',x''))\right]^{1/p}:\pi\in\Pi(m',m'')\bigg\},\end{split}\] where $\Pi(m',m'')$ stands for the set of all plans between $m'$ and $m''$, i.e., $\pi\in \mathcal{P}({\mathbbm{X}}\times {\mathbbm{X}})$ if $\operatorname{p}^1\sharp \pi=m'$ and $\operatorname{p}^2\sharp \pi=m''$. Recall that the sequence $\{m_n\}_{n=1}^\infty\subset \mathcal{P}^p({\mathbbm{X}})$ converges to $m\in \mathcal{P}^p({\mathbbm{X}})$ in the $p$-th Wasserstein metric iff $m_n$ converges to $m$ narrowly and $\{m_n\}_{n=1}^\infty$ has uniformly integrable $p$-th moment~\cite{Ambrosio}. 
	\item We assume that $\rd$ is the Euclidean space of column-vectors, when  $\rds$ stands for the space of row-vectors.
	\item If $\phi:\rd\rightarrow\mathbb{R}$ is a $C^1$-function, then $\nabla_x\phi(x)$ denotes the row-vector of its partial derivatives. In the case where $\phi$ takes values in $\rd$,  $\nabla_x\phi$ is assumed to be a matrix.
	\item $\lambda$ stands for the Lebesgue measure on the time interval $[0,T]$, $T>0$, $\mathscr{B}_T$ denotes the Lebesgue $\sigma$-algebra on $[0,T]$, i.e., $\mathscr{B}_T\triangleq \mathcal{B}_\lambda([0,T])$. Additionally, $L^p([0,T];{\mathbbm{X}})\triangleq L^p([0,T],\mathscr{B}_T,\lambda;{\mathbbm{X}})$;
	\item If $({\mathbbm{X}},\rho_{\mathbbm{X}})$ is a Polish space, $p\geq 1$, we denote by $\operatorname{AC}^p([0,T];{\mathbbm{X}})$  the set of absolutely continuous functions from $[0,T]$ to ${\mathbbm{X}}$ with the metric derivative lying in $L^p([0,T];\mathbb{R})$ (see \cite[\S 1.1]{Ambrosio} for details). 
	\item Below we fix $p>1$ and denote by $q$ the exponent dual to $p$, i.e., $1/p+1/q=1$.
\end{itemize}

\subsection{Calculus on the space of probability measures} 
In the paper, we consider the concept of intrinsic derivative. Let $\Phi:\mathcal{P}^p(\rd)\rightarrow \mathbb{R}$. The following definition is borrowed from  \cite[Definition 2.2.1.]{Master_cdll}.
\begin{definition}\label{def:var_derivatve}
	The function $\Phi$ is called  of the class $C^1$ if there exists a continuous function $\frac{\delta\Phi}{\delta m}:\prd\times\rd\rightarrow\mathbb{R}$ such that, for any $m'\in\prd$,
	\[\lim_{s\downarrow 0}\frac{\Phi((1-s)m+sm')-\Phi(m)}{s}=\int_{\rd}\frac{\delta\Phi}{\delta m}(m,y)\big[m'(dy)-m(dy)\big].\]
\end{definition}
The function $\frac{\delta\Phi}{\delta m}$ is called the flat derivative of the function $\Phi$.

For the $C^1$-function $\Phi$, we, in particular, have the following equality:
\begin{equation}\label{equality:Phi_integral}
	\Phi(m')-\Phi(m)=\int_0^1\int_{\rd}\frac{\delta\Phi}{\delta m}((1-s)m+sm',y)[m'(dy)-m(dy)]ds.
\end{equation}

Notice that the function $\frac{\delta\Phi}{\delta m}$ is defined up to an additive constant. Following \cite[Definition 2.2.2]{Master_cdll}, we assume the normalization: for each $m\in\prd$,
\[\int_{\rd}\frac{\delta \Phi}{\delta m}(m,y)m(dy)=0.\] 

The following definition also is proposed in \cite{Master_cdll} (see Definition 2.2.2 there).
\begin{definition}\label{def:L_derivative}
	If the function $\rd\ni y\mapsto \frac{\delta\Phi}{\delta y}(m,y)$ is $C^1$, then the function $\nabla_m \Phi$ defined by the rule
	\[\nabla_m\Phi(m,y)\triangleq \nabla_y\frac{\delta\Phi}{\delta m}(m,y)\] is called an intrinsic derivative of the function $\Phi$.
\end{definition} In the following, we assume that $\nabla_m\Phi$ takes values in the space of row-vectors $\rds$. When $\nabla_m\Phi$ exists and is continuous, we say that $\Phi$ is continuously differentiable.

Similarly to the finite dimensional case, the boundness  of the derivative w.r.t. probability implies the Lipschitz continuity w.r.t. to the Wasserstein distance. This property is proved in Proposition~\ref{prop:lipschitz} (see~\ref{appendix:derivative}). Additionally, in that Appendix, we compute the intrinsic derivative for two basic examples of functionals over measures, and find  the Gateaux derivative  of a function that depends on a distribution of a random variable.

\subsection{State and control spaces} 

As we mentioned above,  the state space for each agent is $\rd$. We follow approach first proposed by Gamkrelidze \cite{Gamkrelidze} and assume that an adjoint variable lies the dual space to $\rd$ that is the space of row-vector denoted by $\rds$. 

We denote the set of all trajectories on $[0,T]$ by $\Gamma$, i.e.,
\[\Gamma\triangleq C([0,T];\rd).\] We endow $\Gamma$ with the usual $\sup$-norm denoted by $\|\cdot\|_{\infty}$. The set of continuous functions defined on $[0,T]$ with values in $\rds$ will be denoted by $\Gamma^\star$. As above, on $\Gamma^\star$ we consider the $\sup$-norm still denoted by~$\|\cdot\|_{\infty}$.

We denote the evaluation operator by $e_t$, i.e, for each $t\in [0,T]$, $e_t:\Gamma\rightarrow\rd$ acts by the rule:
\[e_t(\gamma)\triangleq\gamma(t).\] With some abuse of notation, we use the same symbol $e_t$ for the evaluation operators defined on $\Gamma^\star$ and $\Gamma\times\Gamma^\star$. In those case, $e_t$ takes values either in $\rds$ or in $\rd\times\rds$.

If $b:[0,T]\times \rd\rightarrow\rd$, then we say that $x(\cdot):[0,T]\rightarrow \rd$ satisfies the differential equation
\[\frac{d}{dt}x(t)=b(t,x(t))\] if, for every $t\in [0,T]$,
\[x(t)=x(0)+\int_0^t b(\tau,x(\tau))d\tau.\] 

In the paper, we primarily deal with the Lagrangian approach which describe the motion and open-loop strategy of the mean field type control system as   processes $X$ and $u$ respectively defined on some standard probability space $\probspace$. Throughout the paper, we  follow the conventions of  probability theory and  omit the dependence on $\omega$  when no confusion  arises. Additionally, as it was mentioned above, if $g$ is a random variable, we primarily write $\expect g$ instead of $\int_\Omega g(\omega)\pp(d\omega)$.

We assume that a process describing a motion of the system has continuous paths with the $\sup$-norms lying in $L^p$ for some $p>1$, i.e, we  work with the space $\xp$ that contains all functions $X$ defined on $[0,T]\times \Omega$ with values in~$\rd$ satisfying the following condition: the mapping $\widehat{X}$ that assigns to $\omega\in\Omega$ the whole path $X(\cdot,\omega)$ takes values in $\Gamma$  $\pp$-a.s. and lies in $L^p(\Omega,\mathcal{F},\pp;\Gamma)$.
The norm on $\xp$ is equal to
\[\norm{X}{\xp}\triangleq \big(\expect[\norm{\widehat{X}}{\infty}^p]\big)^{1/p}=\left(\int_{\Omega}\sup_{t\in [0,T]}\|X(t,\omega)\|^p\pp(d\omega)\right)^{1/p}.\]
Notice that $X\in\xp$ is entirely determined by an element of $L^p(\Omega,\mathcal{F},\pp;\Gamma)$. To show this, it suffices, given $\widehat{X}\in L^p(\Omega,\mathcal{F},\pp;\Gamma)$,  let $X(t,\omega)\triangleq \widehat{X}(\omega)(t)$. Moreover, each $X\in\xp$ is measurable w.r.t. $\mathscr{B}_T\otimes \mathcal{F}$.


If $X\in \xp$, then, for each $t\in [0,T]$, the mapping $\Omega\ni\omega \mapsto X(t,\omega)$ is an element of $L^p(\Omega,\mathcal{F},\pp;\rd)$. Due to the convention of probability theory, we will widely use $X(t)$ both to denote $X(t,\omega)$ and the mapping $X(t,\cdot)$ when their meanings are clear. In particular, $X(t)\sharp P$ means the push-forward measure of the probability $\pp$ by the mapping $X(t,\cdot)$. In this case,
\begin{equation}\label{ineq:M_X_t_X_xp}\mathcal{M}_p^p(X(t)\sharp \pp)=\norm{X(t)}{L^p}^p\leq \norm{X}{\xp}^p.\end{equation}

In the paper, we consider the case where the set of instantaneous controls $U$ is a closed subset of some normed space. 
Generally, the set $U$ can be unbounded while the payoff can grow superlinearly 
(see assumptions~\ref{assumption:Uclosed},~\ref{assumption:sublinear} below). 
Thus, it is reasonable to assume that the agents use  controls with  finite $L^p$-norm. 
Therefore, within the Lagrangian approach, the function assigning to the agent's label and time instant a control is 
chosen from the  set \[\mathcal{U}^p\triangleq L^p([0,T]\times \Omega,\mathscr{B}_T\otimes\mathcal{F},\lambda\otimes\pp;U).\] 
Recall that the norm of an element $u\in\mathcal{U}^p$ is given by the formula:
\[\norm{u}{\up}\triangleq \left(\mathbb{E}\left[\int_0^T\|u(t)\|^pdt\right]\right)^{1/p}
=\left(\int_0^T\int_\Omega\|u(t,\omega)\|^p\pp(d\omega)dt\right)^{1/p}.\]

\section{Lagrangian formulation of the mean field type control problem}\label{sect:lagrangian}
We consider the mean field type control problem with the dynamics of each agent given by the ordinary differential equation 
\begin{equation*}\label{eq:system}
	\begin{split}
		\frac{d}{dt}x(t)=f(t,x(t&),m(t),u(t)),\\ &t\in [0,T],\ \ x(t)\in\rd, \ \ m(t)\in\prd,\ \ u(t)\in U. 
	\end{split}
\end{equation*} Here $x(t)$ is the state, while $u(t)$ is the control of the agent at time $t$. Additionally, $m(t)$ describes the distribution of all agents at time $t$. The initial distribution of agents is assumed to be fixed and equal to $m_0$. The agents try to minimize the averaged  individual cost. The latter is equal to
\begin{equation*}\label{system:payoff}
	\sigma(x(T),m(T))+\int_0^Tf_0(t,x(t),m(t),u(t))dt.
\end{equation*} 

In the hypotheses formulated below, we use $\nabla_m f(t,x,m,y,u)$ for the derivative of $f$ w.r.t. measure variable for fixed $t$, $x$ and $u$. Recall that this derivative is a function of extra variable $y\in\rd$. The same concerns $\nabla_m f_0(t,x,m,y,u)$ and $\nabla_m\sigma(x,m,y)$. 

Throughout this paper, we assume the following.
\begin{enumerate}[label=(H\arabic*), series=listCond]
	\item \label{assumption:Uclosed}  $U$ is a closed subset of a separable Banach space;
	\item\label{assumption:function} the functions $f$, $f_0$  are Lebesgue measurable w.r.t. $t$ and continuous w.r.t. phase, measure and control variables; 
	\item\label{assumption:sublinear} there exists a constant $C_\infty$ such that
	\[\|f(t,x,m,u)\|\leq C_\infty(1+\|x\|+\mathcal{M}_p(m)+\|u\|),\]
	\[|f_0(t,x,m,u)|\leq C_\infty(1+\|x\|^p+\mathcal{M}_p^p(m)+\|u\|^p),\]
	\[|\sigma(x,m)|\leq C_\infty(1+\|x\|^p+\mathcal{M}_p^p(m));\]
	\item\label{assumption:derivative_f}  the function $f$ is continuously differentiable w.r.t. $x$ and $m$; its derivatives $\nabla_x f$ and $\nabla_m f$ are bounded by constants $C_x$ and $C_m$ respectively;
	\item\label{assumption:derivative_f_0} the function $f_0$ is continuously differentiable w.r.t. $x$ and $m$; the derivatives $\nabla_x f_0$ and $\nabla_m f_0$ satisfy the following growth conditions with constants $C_x^0$, $C_m^0$:
	\[\|\nabla_x f_0(t,x,m,u)\|^q\leq C_x^0(1+\|x\|^p+\mathcal{M}_p^p(m)+\|u\|^p),\]
	\[\|\nabla_m f_0(t,x,m,y,u)\|^q\leq C_m^0(1+\|x\|^p+\|y\|^p+\mathcal{M}_p^p(m)+\|u\|^p);\]
	\item \label{assumption:derivative_sigma} the terminal payoff $\sigma$ is continuously differentiable; the functions $\nabla_x \sigma$ and $\nabla_m \sigma$ satisfy the following estimates with some nonnegative constants $C_x^\sigma$, $C_m^\sigma$:
	\[\|\nabla_x \sigma(x,m)\|^q\leq C_x^\sigma(1+\|x\|^p+\mathcal{M}_p^p(m)),\]
	\[\|\nabla_m \sigma(x,m,y)\|^q\leq C_m^\sigma(1+\|x\|^p+\|y\|^p+\mathcal{M}_p^p(m)).\] 
\end{enumerate} 
In conditions \ref{assumption:derivative_f_0}, \ref{assumption:derivative_sigma}, $q$ stands for the exponent dual to $p$, i.e., $\frac{1}{p}+\frac{1}{q}=1.$

Let us introduce the Lagrangian approach to the mean field type  control problems (see \cite{Cavagnari_et_al_approaches} for details). It relies on labeling of the agents by elements of a set $\Omega$. In the following, let $(\Omega,\mathcal{F},\pp)$ be a standard probability space.

\begin{definition}\label{def:Lagrangian_admissible}
	We say that a pair $(X,u)$, where $X\in \xp$, $u\in \up$, is a Lagrangian control process if, for $\pp$-a.e. $\omega\in\Omega$, $X(\cdot,\omega)$ solves the differential equation
	\[\frac{d}{dt}X(t,\omega)=f(t,X(t,\omega), X(t)\sharp \pp, u(t,\omega)).\]
\end{definition} The payoff function within the Lagrangian approach is computed by the formula:
\begin{equation}\label{intro:def_J_L}J_L(X,u)\triangleq \mathbb{E}\left[\sigma(X(T),X(T)\sharp \pp)+\int_0^T f_0(t,X(t),X(t)\sharp \pp,u(t))dt\right].\end{equation} 

\begin{remark}\label{remark:well_posedness} Due to assumption \ref{assumption:sublinear} the functional $J_L(X,u)$ is finite for every $X\in \xp$, $u\in \up$. 
\end{remark}

Notice that, if $(X,u)$ is a Lagrangian control process, then the paths $\widehat{X}$ are $\pp$-a.s. absolutely continuous function. However, it is more convenient to work with a larger class of continuous functions.
This will be used in Sections~\ref{sect:Kantorovich},~\ref{sect:Euler} to establish links of Lagrangian approach with Kantorovich and Eulerian formalizations.

For the Lagrangian formulation of the optimal control problem we will consider two type of  initial conditions. First, assume that the initial assignment of agents $X_0\in L^p(\Omega,\mathcal{F},\pp;\rd)$ is given, whilst the second approach fixes only the initial distribution. 
\begin{definition}\label{def:boundary_condition} We say that a Lagrangian control process $(X,u)$ meets the initial condition for the given assignment $X_0$ where $X_0\sharp \pp=m_0$ if
	\[X(0)=X_0,\ \ \pp\text{-a.s.}\] Given $X_0$, we denote the set of control processes satisfying initial assignment condition by $\mathcal{A}_L(X_0)$.
	
	We say that a process $(X,u)$ satisfies the initial distribution conditions if
	\[X(0)\sharp \pp=m_0.\] The set of control processes satisfying initial distribution condition is denoted by $\operatorname{Adm}_L(m_0)$.
\end{definition} Notice that,
\[\operatorname{Adm}_L(m_0)=\bigcup_{X_0\in L^p(\Omega,\mathcal{F},\pp;\rd): X_0\sharp \pp=m_0}\mathcal{A}_L(X_0).\]
Simultaneously, an initial assignment condition can detail a feature of the initial distribution in the case when the probability space $\probspace$ is sufficiently rich.  

In this paper, we examine both strong and Pontryagin minima. In the latter case, we use  concepts borrowed from \cite{Arutyunov2004}.

\begin{definition}\label{def:minima_L_p_strong}
	Given an initial assignment $X_0\in L^p(\Omega,\mathcal{F},\pp;\rd)$, we say that a control process $(X^*,u^*)\in\mathcal{A}_L(X_0)$ is a strong local  $L^p$-minimizer at~$X_0$ if there exists $\varepsilon>0$ satisfying the following condition: for every $(X,u)\in\mathcal{A}_L(X_0)$ such that $\|X-X^*\|_{\xp}\leq\varepsilon$,
	\begin{equation}\label{ineq:J_L_minima}J_L(X^*,u^*)\leq J_L(X,u).\end{equation}
\end{definition}
\begin{definition}\label{def:minima_W_strong}
	A control process $(X^*,u^*)\in\operatorname{Adm}_L(m_0)$ is called a strong local  $W_p$-minimizer at $m_0\in\prd$ if one can find $\varepsilon>0$ such that~(\ref{ineq:J_L_minima}) holds true for every $(X,u)\in\operatorname{Adm}_L(m_0)$ satisfying $W_p(X(t)\sharp \pp,X^*(t)\sharp \pp)\leq\varepsilon$ when $t\in [0,T]$.
\end{definition}
\begin{definition}\label{def:minima_L_p_Pontr}
	Given an initial assignment $X_0\in L^p(\Omega,\mathcal{F},\pp;\rd)$,  a control process $(X^*,u^*)\in\mathcal{A}_L(X_0)$ is said to be a  Pontryagin local $L^p$-minimizer at~$X_0$ if there exists $\varepsilon>0$ satisfying the following condition: for each $(X,u)\in\mathcal{A}_L(X_0)$ such that $\|X-X^*\|_{\xp}\leq\varepsilon$ and $(\lambda\otimes\pp)\{(t,\omega)\in [0,T]\times \Omega:u^*(t,\omega)\neq u(t,\omega)\}\leq\varepsilon$, inequality
	\eqref{ineq:J_L_minima} is fulfilled.
\end{definition}
\begin{definition}\label{def:minima_W_Pontr}
	A control process $(X^*,u^*)\in\operatorname{Adm}_L(m_0)$ is called a  Pontryagin local $W_p$-minimizer at $m_0$ if one can find $\varepsilon>0$ such that~(\ref{ineq:J_L_minima}) holds true for every $(X,u)\in\operatorname{Adm}_L(m_0)$ satisfying $\sup_{t\in [0,T]}W_p(X(t)\sharp \pp,X^*(t)\sharp \pp)\leq\varepsilon$ and $(\lambda\otimes\pp)\{(t,\omega)\in [0,T]\times \Omega:u^*(t,\omega)\neq u(t,\omega)\}\leq\varepsilon$.
\end{definition} Let us discuss the relationship between the minima introduced above. 
\begin{proposition}\label{prop:minimizers} Let $X_0\in L^p(\Omega,\mathcal{F},\pp;\rd)$ and let $m_0\in \prd$ be such that $m_0=X_0\sharp \pp$.
	\begin{enumerate}
		\item If $(X^*,u^*)\in \mathcal{A}_L(X_0)$ is a strong local $W_p$-minimizer at $m_0$, then it is a strong local $L^p$-minimizer at $X_0$. 
		\item If $(X^*,u^*)\in \mathcal{A}_L(X_0)$ is a  Pontryagin local $W_p$-minimizer at $m_0$, it is a  Pontryagin local $L^p$-minimizer at $X_0$.
		\item Every  strong local $L^p$-minimizer at $X_0$ is a  Pontryagin local $L^p$-minimizer at $X_0$;
		\item Every strong $W_p$-minimizer at $m_0$ is a Pontryagin local $W^p$-minimizer at $m_0$.
	\end{enumerate}
\end{proposition}
\begin{proof}
	We will consider only the first statement as the second one is proved in the same fashion, whilst the third and fourth statements are obvious. 
	
	Since $(X^*,u^*)\in \mathcal{A}_L(X_0)$ is a strong local $W_p$-minimizer at $m_0$,  there exists $\varepsilon>0$ such that, for every $(X,u)\in \operatorname{Adm}_L(m_0)$ satisfying $W_p(X(t)\sharp\pp,X^*(t)\sharp \pp)\leq\varepsilon$, one has
	\[J_L(X^*,u^*)\leq J_L(X,u).\] 
	
	Now let $(X,u)\in \mathcal{A}_L(X_0)$ be such that  $\norm{X-X^*}{\xp}\leq\varepsilon$. Given $t\in [0,T]$, we choose a plan $\pi\triangleq (X(t),X^*(t))\sharp\pp$. By construction, $\pi\in \Pi(X(t)\sharp\pp,X^*(t)\sharp\pp)$. Thus, we have that 
	\[\begin{split}
		W_p^p(X(t)\sharp\pp,X^*(t)\sharp \pp)&\leq \int_{\rd\times\rd}\|x-x^*\|^p\pi(d(x,x^*))\\&=\norm{X(t)-X^*(t)}{L^p}^p\leq \norm{X-X^*}{\xp}^p\leq\varepsilon^p.\end{split}\] This together with  assumption that $(X^*,u^*)$ is a  local $W_p$-minimizer at $m_0$ gives the first statement of the proposition.
\end{proof}

The Pontryagin maximum principle for the Lagrangian formalization is derived for the mildest concept of minimum that is the Pontryagin $L^p$-minimizer. This is the main motivation to introduce this concept. Notice that it utilizes the class $\mathcal{A}_L(X_0)$. At the same time, the class of processes $\operatorname{Adm}_L(m_0)$ and the corresponding concept of Lagrangian $W_p$-minima fit both the Kantorovich and Eulerian approaches. As we will see below in Theorems \ref{th:Kantorovich_min_Lagrangian} and \ref{th:Euler_min_Lagrangian}, for each minimizer within these approaches, one can find an appropriate Lagrangian $W_p$-minimizer that is a Lagrangian $L^p$-minimizer for some initial assignment~$X_0$. However, it follows from  \cite[\S 8.3]{Cavagnari_et_al_approaches} that, generally, there is  an initial assignment  that  does not allow a Lagrangian $L^p$-minimizer lying in $\mathcal{A}_L(X_0)$, whilst the Kantorovich and Eulerian minimizers for the initial measure $X_0\sharp\pp$ exist.

\section[PMP for the Lagrangian formulation]{Pontryagin\- maximum\- principle\- for\- the\- Lagrangian\- formulation\- of\- mean\- field\- type\- optimal\- control\- problem}\label{sect:PMP_Lagrangian} In this section, we assume that we are given with a  standard probability space $(\Omega,\mathcal{F},\pp)$, initial assignment $X_0\in L^p(\Omega,\mathcal{F},\pp;\rd)$, and a control process $(X^*,u^*)$ defined on this probability space that is a $L^p$-minimizer. Furthermore, $m_0=X_0\sharp\pp$.

To formulate the Pontryagin maximum principle, we define two  Pontryagin functions (Hamiltonians).
\begin{itemize}
	\item A \textit{local Pontryagin function} is a mapping defined for $t\in [0,T]$, $x\in\rd$, $m\in\prd$, $\psi\in \rds$, $u\in U$ by the rule
	\begin{equation}\label{intro:hamiltinian}
		H(t,x,m,\psi,u)\triangleq \psi f(t,x,m,u)-f_0(t,x,m,u).
	\end{equation} 
	\item A \textit{$L^p$-Pontryagin function} is a mapping $\mathbb{H}:[0,T]\times L^p(\Omega,\mathcal{F},\pp;\rd)\times L^q(\Omega,\mathcal{F},\pp;\rds)\times L^p(\Omega,\mathcal{F},\pp;U)\rightarrow\mathbb{R}$ defined by the formula: 
	\[\mathbb{H}(t,X,\Psi,u)\triangleq \expect H(t,X,\Psi,X\sharp \pp,u).\] 
\end{itemize}

Furthermore, let $\mathcal{Y}^q$ be the set of function $\Psi:[0,T]\times\Omega\rightarrow\rds$ such that  $\widehat{\Psi}\in L^q(\Omega,\mathcal{F},\pp;\Gamma^\star)$. As above, we denote by $\widehat{\Psi}$ the mapping assigning to $\omega\in\Omega$ the whole path $\Psi(\cdot,\omega)$. The norm of an element $\Psi\in\mathcal{Y}^q$ is given by the formula:
\[\norm{\Psi}{\mathcal{Y}^q}\triangleq \Big[\expect\norm{\widehat{\Psi}}{\infty}^q\Big]^{1/q}.\] 

Notice that, due to assumption \ref{assumption:sublinear} and the H\"older inequality, 
\begin{equation}\label{ineq:Ham_finite}
	\big|\mathbb{H}(t,X,\Psi,u)\big|<+\infty
\end{equation} for every $t\in [0,T]$, $X\in L^p(\Omega,\mathcal{F},\pp;\rd)$, $\Psi\in L^q(\Omega,\mathcal{F},\pp;\rds)$, $u\in L^p(\Omega,\mathcal{F},\pp;U)$. Moreover, if $X\in\xp$, $u\in \up$ and $\Psi\in \mathcal{Y}^q$, then
\begin{equation}\label{ineq:Ham_int_finite}
	\begin{split}
		\Bigg|\int_0^T\mathbb{H}(t,&X(t),\Psi(t),u(t))dt\Bigg|\\&\leq \mathbb{E}\Bigg[\int_0^T|H(t,X(t),\Psi(t),X(t)\sharp \pp,u(t))|dt\Bigg]<+\infty.\end{split}
\end{equation}

\begin{theorem}\label{th:PMP_Lagrangian}
	Let $(\Omega,\mathcal{F},\pp)$ be a standard probability space, $X_0$ be an initial assignment, $(X^*,u^*)\in\mathcal{A}_L(X_0)$ be a  Pontryagin local $L^p$-minimizer. Then there exists a function $\Psi\in \mathcal{Y}^q$ such that the following conditions hold true:	
	\begin{itemize}
		\item
		costate equation: for $\pp$-a.e. $\omega\in \Omega$, $\Psi(\cdot,\omega)$ solves
		\begin{equation}\label{eq:Psi}
			\begin{split}
				\frac{d}{dt}&\Psi(t,\omega)=-\Psi(t,\omega)\nabla_xf(t,X^*(t,\omega),X^*(t)\sharp \pp,u^*(t,\omega))\\&+\nabla_xf_0(t,X^*(t,\omega),X^*(t)\sharp \pp,u^*(t))\\&-
				\int_\Omega \Psi(t,\omega')\nabla_m f(t,X^*(t,\omega'),X^*(t)\sharp \pp,X^*(t,\omega),u^*(t,\omega'))\pp(d\omega')\\&+
				\int_\Omega \nabla_m f_0(t,X^*(t,\omega'),X^*(t)\sharp \pp,X^*(t,\omega),u^*(t,\omega'))\pp(d\omega'),\end{split}
		\end{equation} 
		\item transversality condition: for $\pp$-a.e. $\omega\in \Omega$,
		\begin{equation}\label{eq:transversality}
			\begin{split}
				\Psi(T,\omega)=-\nabla_x&\sigma(X^*(T,\omega),X^*(T)\sharp \pp)\\&- \int_\Omega\nabla_m\sigma(X^*(T,\omega'),X^*(T)\sharp \pp,X^*(T,\omega))\pp(d\omega').\end{split}
		\end{equation} 
		\item maximization of the Hamiltonian condition: at almost every   point $s\in [0,T]$,
		\begin{equation}\label{condition:maximum_integral}
			\mathbb{H}(s,X^*(s),\Psi(s),u^*(s))=\max_{\nu\in L^p(\Omega,\mathcal{F},\pp;U)}\mathbb{H}(s,X^*(s),\Psi(s),\nu)
		\end{equation} or, equivalently,  
		\begin{equation}\label{condition:maximum_local}
			\begin{split}
				H(s,X^*(s),\Psi(s),&X^*(s)\sharp \pp,u^*(s))\\&=\max_{u\in U}H(s,X^*(s),\Psi(s),X^*(s)\sharp \pp,u)\ \ \pp\text{-a.s.}\end{split}
		\end{equation}
	\end{itemize}
\end{theorem}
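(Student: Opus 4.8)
The plan is to prove Theorem~\ref{th:PMP_Lagrangian} by the classical needle (spike) variation technique, adapted to the Lagrangian mean field setting. The fact that $(X^*,u^*)$ is merely a \emph{Pontryagin} local minimizer (rather than a strong one) is exactly what makes spike variations admissible: a needle perturbation changes the control on a set of the form $[s,s+h]\times\Omega$, whose $\lambda\otimes\pp$-measure is $h$, and it moves the state by $O(h)$ in $\xp$; hence for small $h$ both smallness constraints in Definition~\ref{def:minima_L_p_Pontr} are met and $J_L(X^h,u^h)\geq J_L(X^*,u^*)$ applies. Differentiating this inequality at $h=0^+$ will yield the maximization condition, while the costate $\Psi$ is introduced as the solution of the adjoint system chosen so that the first variation collapses to a pointwise-in-$s$ expression. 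Throughout, write $m^*(t)=X^*(t)\sharp\pp$.

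First I would construct the costate by defining $\Psi\in L^q(\Omega,\mathcal{F},\pp;\Gamma^*)$ as the solution of the backward linear equation~\eqref{eq:Psi} with terminal data~\eqref{eq:transversality}. This is a linear ODE on the Banach space $L^q(\Omega,\mathcal{F},\pp;\rds)$: the local term $\Psi\mapsto -\Psi\nabla_x f$ and the nonlocal term $\Psi(\cdot,\omega)\mapsto -\int_\Omega\Psi(t,\omega')\nabla_m f(\dots)\pp(d\omega')$ are both bounded operators on $L^q$, since $\nabla_x f$ and $\nabla_m f$ are bounded by $C_x$ and $C_m$ (assumption~\ref{assumption:derivative_f}); in particular the nonlocal term is controlled by $C_m\norm{\Psi(t)}{L^q}$ uniformly in $\omega$. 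The inhomogeneous forcing comes from $\nabla_x f_0$, $\nabla_m f_0$ and the terminal forcing from $\nabla_x\sigma$, $\nabla_m\sigma$; the $q$-growth bounds in assumptions~\ref{assumption:derivative_f_0}--\ref{assumption:derivative_sigma}, combined with $X^*\in\xp$ and $u^*\in\up$, guarantee these forcing terms lie in $L^q$. Existence, uniqueness and the $L^q$-bound then follow from the variation-of-constants formula and a Gr\"onwall inequality in the Banach space.

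Next I linearize the state dynamics. Fix a regular point $s$ and $\nu\in L^p(\Omega,\mathcal{F},\pp;U)$, set $u^h=\nu$ on $[s,s+h]$ and $u^h=u^*$ elsewhere, and let $X^h$ be the associated state. The variation $Y(t)=\lim_{h\downarrow0}(X^h(t)-X^*(t))/h$ should solve the linearized equation
\[\tfrac{d}{dt}Y(t,\omega)=\nabla_xf(\dots)\,Y(t,\omega)+\int_\Omega\nabla_mf(t,X^*(t,\omega),m^*(t),X^*(t,\omega'),u^*(t,\omega))\,Y(t,\omega')\pp(d\omega'),\]
with $Y\equiv0$ on $[0,s]$ and initial jump $Y(s,\omega)=f(s,X^*(s,\omega),m^*(s),\nu(\omega))-f(s,X^*(s,\omega),m^*(s),u^*(s,\omega))$. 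The derivative in the measure argument is the push-forward chain rule established in the Appendix. Regularity of $s$ gives the jump as the $L^p$-limit of the needle average, and boundedness of $\nabla_xf,\nabla_mf$ makes the linear system well posed in $\xp$ with a Gr\"onwall estimate. The crux here is to show $\norm{X^h-X^*-hY}{\xp}=o(h)$, which is where the mean field coupling (one agent's variation feeding back into everyone's measure argument) and the possibly unbounded $U$ with $p$-growth demand the most care; together with the $L^q$ integrability bookkeeping of the previous step, forced by the split $p,q$ exponents, I expect this to be the main technical obstacle.

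Finally, I compute the first variation of $J_L$ and invoke the duality between $Y$ and $\Psi$. Using the intrinsic derivatives of $\sigma$ and $f_0$ and Fubini to move the inner $\pp(d\omega)$-average onto $Y$, the terminal and running cost variations are expressed as $\expect[g(T)Y(T)]$ and a time integral of analogous integrands, where $g(T)=-\Psi(T)$ by~\eqref{eq:transversality}. Differentiating $\expect[\Psi(t)Y(t)]$ and substituting~\eqref{eq:Psi} and the linearized equation, the two nonlocal mean field terms cancel after relabeling $\omega\leftrightarrow\omega'$ under Fubini, so that $\tfrac{d}{dt}\expect[\Psi(t)Y(t)]$ equals the propagated running cost integrand. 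Integrating from $s$ to $T$ and substituting the resulting duality identity into the first variation eliminates both the terminal contribution $\expect[g(T)Y(T)]$ and the propagated running cost, leaving only the needle terms at $s$:
\[\lim_{h\downarrow0}\tfrac1h\big(J_L(X^h,u^h)-J_L(X^*,u^*)\big)=\mathbb{H}(s,X^*(s),\Psi(s),u^*(s))-\mathbb{H}(s,X^*(s),\Psi(s),\nu)\geq0,\]
which is~\eqref{condition:maximum_integral}. The equivalence with the pointwise form~\eqref{condition:maximum_local} then follows from a measurable selection argument: since the measure argument of $H$ is frozen at $m^*(s)$, maximizing $\expect H$ over $\nu\in L^p(\Omega;U)$ is the same as maximizing $H$ in $u\in U$ for $\pp$-a.e.\ $\omega$, the nontrivial direction being handled by truncating a would-be improving selection to keep it in $L^p$ when $U$ is unbounded. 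The cancellation of the mean field cross terms is the structural heart that pins down the form of~\eqref{eq:Psi}.
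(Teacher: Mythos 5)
Your proposal follows essentially the same route as the paper's own proof: spike variations at a regular point whose admissibility is justified exactly by the two smallness constraints in the definition of a Pontryagin local $L^p$-minimizer, the linearized variation $Y_\nu$ with jump $\Delta^s_\nu f^*$ at time $s$ and the $o(h)$ expansion $\|Z^{h}_\nu(t)-X^*(t)-hY_\nu(t)\|_{L^p}=o(h)$ (the paper's Proposition~\ref{prop:derivative_y_h}, proved along a subsequence $h_n$ with a.e.\ convergence and dominated convergence), the costate $\Psi$ defined by the backward adjoint system so that the Fubini relabeling $\omega\leftrightarrow\omega'$ cancels the nonlocal mean field cross terms in $\frac{d}{dt}\expect[\Psi(t)Y_\nu(t)]$ and the first variation collapses to $\mathbb{H}(s,X^*(s),\Psi(s),u^*(s))-\mathbb{H}(s,X^*(s),\Psi(s),\nu)\geq 0$, and finally a measurable-selection argument for passing from the integral to the pointwise maximum condition. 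The only notable (and welcome) refinement is your truncation remark in the selection step, which makes explicit the $L^p$-integrability of the improving control that the paper handles only implicitly via Lusin's theorem and a closed-graph selector on the compact set $\Xi$.
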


Proposition~\ref{prop:minimizers} and Theorem~\ref{th:PMP_Lagrangian} imply the following. 

\begin{corollary}\label{cor:PMP_w_min} The conclusion of Theorem~\ref{th:PMP_Lagrangian} holds true in the cases when $(X^*,u^*)$ is a  Pontryagin local $W_p$-minimizer, a  strong local $L^p$-minimizer or  strong local $W_p$-minimizer.
\end{corollary}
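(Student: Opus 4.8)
The plan is to deduce all three cases from Theorem~\ref{th:PMP_Lagrangian} without reproving anything: I would show that a minimizer of each of the three indicated types is automatically a Pontryagin local $L^p$-minimizer, so that the theorem applies verbatim and produces the same costate process $\Psi$, costate equation, transversality and maximization conditions. The entire argument reduces to comparing the four classes of admissible competitors appearing in Definitions~\ref{def:minima_L_p_strong},~\ref{def:minima_W_strong},~\ref{def:minima_L_p_Pontr},~\ref{def:minima_W_Pontr} and invoking a single elementary estimate.

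First I would record the estimate linking the Wasserstein distance of the time-marginals to the $\xp$-norm. For $X,X^*\in\xp$ and any $t\in[0,T]$, the map $\omega\mapsto(X(t,\omega),X^*(t,\omega))$ pushes $\pp$ forward to a transport plan between $X(t)\sharp\pp$ and $X^*(t)\sharp\pp$, whence
\[W_p(X(t)\sharp\pp,X^*(t)\sharp\pp)\leq\Bigl(\expect\|X(t)-X^*(t)\|^p\Bigr)^{1/p}\leq\norm{X-X^*}{\xp}.\]
In particular, if $\norm{X-X^*}{\xp}\leq\varepsilon$, then $W_p(X(t)\sharp\pp,X^*(t)\sharp\pp)\leq\varepsilon$ holds for \emph{every} $t\in[0,T]$ simultaneously.

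With this in hand I would verify three inclusions of competitor classes. (i) The admissibility restriction in Definition~\ref{def:minima_L_p_Pontr} is that of Definition~\ref{def:minima_L_p_strong} supplemented by $(\lambda\otimes\pp)\{u^*\neq u\}\leq\varepsilon$, so the Pontryagin $L^p$-competitor class is contained in the strong $L^p$-competitor class for the same $\varepsilon$; hence a strong local $L^p$-minimizer is a Pontryagin local $L^p$-minimizer. (ii) Using $\mathcal{A}_L(X_0)\subseteq\operatorname{Adm}_L(m_0)$ together with the displayed estimate, any $(X,u)\in\mathcal{A}_L(X_0)$ with $\norm{X-X^*}{\xp}\leq\varepsilon$ satisfies $W_p(X(t)\sharp\pp,X^*(t)\sharp\pp)\leq\varepsilon$ for all $t$, so it lies in the class tested by Definition~\ref{def:minima_W_strong}; therefore a strong local $W_p$-minimizer is a strong local $L^p$-minimizer. (iii) Carrying the control-support restriction through the same reasoning unchanged shows that a Pontryagin local $W_p$-minimizer is a Pontryagin local $L^p$-minimizer.

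Chaining these inclusions yields strong local $W_p$-minimizer $\Rightarrow$ strong local $L^p$-minimizer $\Rightarrow$ Pontryagin local $L^p$-minimizer, together with strong local $L^p$-minimizer $\Rightarrow$ Pontryagin local $L^p$-minimizer and Pontryagin local $W_p$-minimizer $\Rightarrow$ Pontryagin local $L^p$-minimizer, so that in every case $(X^*,u^*)$ is a Pontryagin local $L^p$-minimizer and Theorem~\ref{th:PMP_Lagrangian} applies. There is no genuine analytic difficulty in this corollary; the only point requiring care is the \emph{direction} of the Wasserstein--$L^p$ estimate, namely that $L^p$-closeness of the state processes forces $W_p$-closeness of their marginal laws (the converse being false). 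This is precisely what makes the two $W_p$-classes the larger ones and drives all three implications.
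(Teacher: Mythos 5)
Your proof is correct and follows essentially the same route as the paper, which leaves the argument implicit: the corollary is justified there only by the bullet list of implications following Definitions~\ref{def:minima_L_p_strong}--\ref{def:minima_W_Pontr}, resting on exactly your two ingredients --- the competitor-class inclusions ($\mathcal{A}_L(X_0)\subseteq\operatorname{Adm}_L(m_0)$, with the control restriction only shrinking the class) and the coupling estimate $W_p(X(t)\sharp\pp,X^*(t)\sharp\pp)\leq\norm{X-X^*}{\xp}$ obtained from the plan $(X(t),X^*(t))\sharp\pp$. One point worth flagging: you state the strong-versus-Pontryagin implication in the correct direction (strong local minimizer $\Rightarrow$ Pontryagin local minimizer, since the Pontryagin competitor class is the smaller one), whereas the paper's third bullet prints it reversed --- evidently a typo, since only your direction makes Theorem~\ref{th:PMP_Lagrangian} applicable in all three cases.
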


\begin{remark}\label{rmk:Hamiltonian} Computing the derivatives according to the formulae given in Propositions~\ref{prop:computation:int_phi},~\ref{prop:derivative_push_forward}, we arrive at the following the system on state and costate variables in the Hamiltonian form:
	\[\begin{split}
		&\frac{d}{dt}X^*(t)=\nabla_\Psi\mathbb{H}(t,X^*(t),\Psi(t),u^*(t)),\ \ X(0)=X_0,\\ 
		&\frac{d}{dt}\Psi(t)=-\nabla_X\mathbb{H}(t,X^*(t),\Psi(t),u^*(t)),\ \ \Psi(T)=-\nabla_X\Sigma(X^*(T)).
	\end{split}\] Here, $\nabla_X\mathbb{H}$, $\nabla_\Psi\mathbb{H}$ stands for the derivatives w.r.t. $X\in L^p(\Omega,\mathcal{F},\pp;\rd)$ and $\Psi\in L^q(\Omega,\mathcal{F},\pp;\rd)$. Additionally, 
	\[\Sigma(X)\triangleq \expect \sigma(X,X\sharp\pp).\] 
	
	Notice that this representation looks like  a Pontryagin maximum principle for  processes defined on the Banach space $L^p(\Omega,\mathcal{F},\pp;\rd)$ in the case where the  controls are defined on $L^p(\Omega,\mathcal{F},\pp;U)$. In the paper, we do not rely on this reduction to a control problem in the Banach spaces due to the fact that this way requires conditions those are stronger than~\ref{assumption:Uclosed}--\ref{assumption:derivative_sigma} (see \cite{Kipka2014, Kipka2015, Krastanov2011, Krastanov2015}). In particular, these papers requires the uniform (or even Lipschitz) continuity of the Frechet derivative of the  functions $f(t,X,X\sharp\pp,u)$, $f_0(t,X,X\sharp\pp,u)$ w.r.t. $X\in L^p(\Omega,\mathcal{F},\pp;\rd)$. At the same time, in our setting, these functions are only continuous. Therefore, we provide a direct proof that essentially relies on the definition of derivative with respect to a probability measure and the tools of measure theory.
\end{remark}

\begin{remark}\label{remark:finite}
	To compare Theorem~\ref{th:PMP_Lagrangian} with the finite dimensional PMP, one can consider the system of $N$ identical agents assuming that
	\begin{itemize}
		\item the state of the system is described by a vector $\mathbbm{x}=(x_1,\ldots,x_N)$, where $x_i\in\rd$;
		\item the instantaneous control is given by a vector of controls $\mathbbm{u}=(u_1,\ldots,u_N)$;
		\item the dynamics of each agent is governed by the equation:
		\[\frac{d}{dt}x_i(t)=f\Bigg(t,x_i(t),\frac{1}{N}\sum_{j=1}^N\delta_{x_j(t)},u_i(t)\Bigg);\]
		\item the objective functional is equal to
		\[\begin{split}\sum_{i=1}^{N}\sigma\Bigg(x_i&(T),\frac{1}{N}\sum_{j=1}^N\delta_{x_j(T)}\Bigg)\\&+\int_0^T\sum_{i=1}^Nf_0\Bigg(t,x_i(t),\frac{1}{N}\sum_{j=1}^N\delta_{x_j(t)},u_i(t)\Bigg)dt.\end{split}\]
	\end{itemize}
	Choosing $\Omega=\{1,\ldots,N\}$, $\mathcal{F}$ to be the family of all subsets of $\{1,\ldots,N\}$ and let $\pp$ be such that $\pp(\{i\})=1/N$, one can reduce such system to the Lagrangian formulation of mean field type control problem. Moreover, applying Theorem~\ref{th:PMP_Lagrangian}, we derive the necessary condition on a Pontryagin minimizer in this finite agent control problem which  coincides with finite dimensional PMP for the Pontryagin function 
	\[{\mathbbl{H}}(t,\mathbbm{x},\bbpsi,\mathbbm{u})\triangleq \sum_{i=1}^{N}\psi_i f\Bigg(t,x_i,\frac{1}{N}\sum_{j=1}^N\delta_{x_j},u_i\Bigg)-\sum_{i=1}^{N}f_0\Bigg(t,x_i,\frac{1}{N}\sum_{j=1}^N\delta_{x_j},u_i\Bigg)\] and the terminal payoff
	\[\bbsigma(\mathbbm{x})\triangleq \sum_{i=1}^N\sigma\Bigg(x_i,\frac{1}{N}\sum_{j=1}^N\delta_{x_j}\Bigg).\] Above, we used the vector $\bbpsi=(\psi_1,\ldots,\psi_N)$ assuming that $\psi_i\in \rds$.  
\end{remark}

\section{Spike variations}\label{sect:Spike}
In this section, we introduce and discuss spike variations of the Lagrangian control processes which play a crucial role in the proof of Pontryagin maximum principle in the Lagrangian form. First, let us formulate the following property.

\begin{proposition}\label{prop:N_T}
	There exist  sets $\mathcal{N}\subset L^p(\Omega,\mathcal{F},\pp;U)$ and $\mathcal{T}\subset [0,T]$ such that 	 $\mathcal{N}$ is countable, dense in $L^p(\Omega,\mathcal{F},\pp;U)$, $\lambda([0,T]\setminus \mathcal{T})=0$ and, for every $s\in \mathcal{T}$ and $\nu\in\mathcal{N}$, the following properties hold true:
	\[     \norm{u^*(s)}{L^p}<+\infty,\] 
	\begin{equation}\label{equality:f_star}\begin{split}
			\lim_{h\downarrow 0}\frac{1}{h}\int_s^{s+h}\Big\| f( t,&X^*( t),X^*( t)\sharp \pp,u^*( t))\\&-f(s,X^*(s),X^*(s)\sharp \pp,u^*(s))\Big\|_{L^p}d t=0,\end{split}\end{equation}
	\begin{equation}\label{equality:f_0_star}\begin{split}	\lim_{h\downarrow 0}\frac{1}{h}\int_s^{s+h}\expect\Big| f_0( t,&X^*( t),X^*( t)\sharp \pp,u^*( t))\\&-f_0(s,X^*(s),X^*(s)\sharp \pp,u^*(s))\Big|d t=0;\end{split}  \end{equation}
	\begin{equation}\label{equality:f_nu}\begin{split}
			\lim_{h\downarrow 0}\frac{1}{h}\int_s^{s+h}\Big\| f( t,&X^*( t),X^*( t)\sharp \pp,\nu)\\&-f(s,X^*(s),X^*(s)\sharp \pp,\nu)\Big\|_{L^p}d t=0,\end{split}\end{equation}
	\begin{equation}\label{equality:f_0_nu}\begin{split}	\lim_{h\downarrow 0}\frac{1}{h}\int_s^{s+h}\expect\Big| f_0( t,&X^*( t),X^*( t)\sharp \pp,\nu)\\&-f_0(s,X^*(s),X^*(s)\sharp \pp,\nu)\Big|d t=0.\end{split}  \end{equation}
\end{proposition}
This statement is proved in \ref{appendix:Lebesgue}.

Let $s\in \mathcal{T}$, $\nu\in \mathcal{N}$. For $h\in [0,T-s]$, set 
\begin{equation*}\label{intro:variation}
	u^h_\nu(t,\omega)\triangleq\left\{ \begin{array}{ll}
		u^*(t,\omega), & t\in [0,s), \\
		\nu(\omega), & t\in [s,s+h), \\
		u^*(t,\omega) & t\in [s+h,T].
	\end{array}
	\right.
\end{equation*} Notice that $u^0_\nu\equiv u^*$.

Furthermore, let us consider the following system of ODEs:
\begin{equation}\label{eq:ODE_omega}
	\frac{d}{dt} Z_\nu^h(t,\omega)= f(t,Z^h_\nu(t,\omega),Z^h_\nu(t)\sharp \pp,u^h_\nu(t,\omega)),\ \ Z^h_\nu(0,\omega)=X_0(\omega).
\end{equation}
\begin{proposition}\label{prop:existence_h}
	For each $h\in [0,T-s]$, there exists a unique solution of~(\ref{eq:ODE_omega}) that lies in $\xp$.
\end{proposition}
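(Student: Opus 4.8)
The plan is to recast \eqref{eq:ODE_omega} as a fixed-point problem for a Picard-type operator on the Banach space $\xp$ and solve it by a contraction argument of McKean--Vlasov type. First I would record that the perturbed control still belongs to $\up$: on $[s,s+h)$ it equals $\nu$ and $\expect\int_s^{s+h}\|\nu(\omega)\|^p\,dt=h\,\norm{\nu}{L^p}^p<\infty$, while elsewhere it coincides with $u^*\in\up$; hence $u^h_\nu\in\up$. For $Z\in\xp$ define the operator $\Phi$ by
\begin{equation*}
(\Phi Z)(t,\omega)\triangleq X_0(\omega)+\int_0^t f(\tau,Z(\tau,\omega),Z(\tau)\sharp\pp,u^h_\nu(\tau,\omega))\,d\tau.
\end{equation*}
By the integral characterization of solutions adopted in the preliminaries, $Z$ solves \eqref{eq:ODE_omega} if and only if $Z$ is a fixed point of $\Phi$; thus it suffices to show that $\Phi$ admits a unique fixed point in $\xp$.

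Next I would verify that $\Phi$ maps $\xp$ into itself. Joint measurability of $(\tau,\omega)\mapsto f(\tau,Z(\tau,\omega),Z(\tau)\sharp\pp,u^h_\nu(\tau,\omega))$ follows from the continuity of $f$ in~\ref{assumption:function}, the joint measurability of $Z$ and $u^h_\nu$, and the continuity of the curve $\tau\mapsto Z(\tau)\sharp\pp$ in $(\prd,W_p)$, itself a consequence of $Z$ having continuous paths with $\omega\mapsto\norm{Z(\cdot,\omega)}{\infty}\in L^p$ together with dominated convergence. The path $t\mapsto(\Phi Z)(t,\omega)$ is continuous, being an indefinite integral. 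The $\xp$-bound comes from the sublinear growth~\ref{assumption:sublinear}: taking the supremum in $t$, then the $L^p(\Omega)$-norm, using $\mathcal{M}_p(Z(\tau)\sharp\pp)=\norm{Z(\tau)}{L^p}\le\norm{Z}{\xp}$ and Hölder in time on the control term (so that $\int_0^T\|u^h_\nu(\tau,\omega)\|\,d\tau\le T^{1/q}(\int_0^T\|u^h_\nu(\tau,\omega)\|^p\,d\tau)^{1/p}$), I obtain a finite bound of the form $\norm{\Phi Z}{\xp}\le\norm{X_0}{L^p}+C_\infty T(1+2\norm{Z}{\xp})+C_\infty T^{1/q}\norm{u^h_\nu}{\up}$.

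For the contraction, assumption~\ref{assumption:derivative_f} gives that $f$ is $C_x$-Lipschitz in $x$ and, via Proposition~\ref{prop:lipschitz}, $C_m$-Lipschitz in $m$ with respect to $W_p$. Combining this with the coupling bound $W_p(Z_1(\tau)\sharp\pp,Z_2(\tau)\sharp\pp)\le\norm{Z_1(\tau)-Z_2(\tau)}{L^p}$ (obtained by using $(Z_1(\tau),Z_2(\tau))\sharp\pp$ as an admissible plan) and setting $\delta(\tau)\triangleq\big\|\,\sup_{t\le\tau}\|(Z_1-Z_2)(t,\cdot)\|\,\big\|_{L^p(\Omega)}$, Minkowski's integral inequality yields
\begin{equation*}
\Big\|\,\sup_{t\le r}\|(\Phi Z_1-\Phi Z_2)(t,\cdot)\|\,\Big\|_{L^p(\Omega)}\le (C_x+C_m)\int_0^r\delta(\tau)\,d\tau.
\end{equation*}
Iterating this estimate along the Picard scheme produces the factorial bound $((C_x+C_m)T)^n/n!$ on the Lipschitz constant of $\Phi^n$, so that some power $\Phi^n$ is a strict contraction on $\xp$; by the Banach fixed-point theorem $\Phi$ has a unique fixed point on all of $[0,T]$, which is the desired unique solution of \eqref{eq:ODE_omega}.

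The Gronwall-type and contraction estimates are routine. The step requiring genuine care is the well-definedness of $\Phi$ on $\xp$, namely the joint measurability of the composed vector field and the continuity of the measure flow $\tau\mapsto Z(\tau)\sharp\pp$ in the Wasserstein space, which is exactly what links the pointwise-in-$\omega$ Carathéodory theory to the mean-field coupling; a secondary technical point is that, since $C_x+C_m$ and $T$ are arbitrary, one does not get a contraction on $[0,T]$ directly and must pass through the iterated map $\Phi^n$ (equivalently, an exponentially weighted norm).
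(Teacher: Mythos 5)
Your proof is correct, but it takes a genuinely different route from the paper. The paper disposes of this proposition in one line, by applying the general existence--uniqueness result for mean field type differential equations, Proposition~\ref{B:prop:existence_uniqueness}, with the driver $b(t,x,m,\omega)=f(t,x,m,u^h_\nu(t,\omega))$; the Lipschitz hypothesis there is supplied by~\ref{assumption:derivative_f} together with Proposition~\ref{prop:lipschitz} (i.e.,~\eqref{ineq:Lip_f}), and the growth hypothesis by~\ref{assumption:sublinear} with $a(t,\omega)=C_\infty(1+\|u^h_\nu(t,\omega)\|)$, which lies in $L^p$ precisely because $u^h_\nu\in\up$ --- the fact you verify at the outset and which the paper records later as~\eqref{ineq:u_bound}. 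The proof of that appendix proposition is structured quite differently from yours: it first builds an $L^1(\Omega)$-valued solution of the integral equation by Picard iteration, then solves, for each fixed $\omega$, the ordinary Carath\'eodory ODE with the measure flow $\Xi(t)\sharp\pp$ frozen, identifies the two solutions via uniqueness of $L^1$-solutions, proves pathwise uniqueness by Gronwall, and only at the end upgrades the integrability to membership in $\xp$. Your argument instead runs a single contraction directly in the Banach space $\xp$, with the time-supremum inside the $L^p(\Omega)$-norm, so existence, uniqueness (within $\xp$, which is the solution class the paper works in), and the $\xp$-bound all come out of one fixed-point step; the iterated-map factorial estimate correctly replaces a smallness restriction on $T$, and your attention to the joint measurability of the composed vector field and the $W_p$-continuity of $\tau\mapsto Z(\tau)\sharp\pp$ is exactly the point that makes the operator well defined. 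What the paper's route buys is reusability: the same appendix proposition is invoked again for the linearized variation system~\eqref{eq:derivative:Y} and for the costate system~\eqref{intro:Psi}, whose drivers involve integral kernels in $\omega'$ and an $\omega$-dependent inhomogeneity, so isolating the general statement amortizes the work; what your route buys is a shorter, self-contained proof of the specific instance that avoids the $L^1$-to-pathwise identification step entirely.
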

The proof of this statement directly follows from \cite[Theorem A.5 and Proposition A.7]{Cavagnari_et_al_approaches} and assumptions \ref{assumption:sublinear},~\ref{assumption:derivative_f}. 

The very construction of $Z_\nu^h$   implies that
\begin{itemize}
	\item $Z^0_\nu(t)=X^*(t),$ $t\in [0,T];$
	\item  $Z^h_\nu(t)=X^*(t)$ when $t\in [0,s]$ $\pp$-a.s.
\end{itemize}

The following statement provides the estimates of the norm of $Z_\nu^h(t)$ as well as the distance between $Z_\nu^h(t)$ and $X^*(s)$. 
\begin{proposition}\label{prop:Z_h_bounds}
	There exist  constants $C_0$, $C_1$, $C_2$, $\bar{h}$ dependent on $(X^*,u^*)$ and $\nu$ such that
	\begin{enumerate}
		\item $\|Z_\nu^h(t)\|_{L^p}\leq C_0$ for $h\in [0,T-s]$, $t\in [s,T]$;
		\item $\|Z_\nu^h(t)-X^*(s)\|_{L^p}\leq C_1(t-s)$ for $t\in [s,s+h]$;
		\item $\|Z_\nu^h(t)-X^*(t)\|_{L^p}\leq C_2 h$ when $h<\bar{h}$, $t\in [0,T]$.
	\end{enumerate}
\end{proposition}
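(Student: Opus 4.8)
The plan is to establish the three estimates by standard Gronwall-type arguments, exploiting the Lipschitz bound~\eqref{ineq:Lip_f} on $f$ together with the sublinear growth assumption~\ref{assumption:sublinear}. The key technical point throughout is that the coupling between the trajectory $Z_\nu^h(t,\omega)$ and its own distribution $Z_\nu^h(t)\sharp\pp$ is controlled in the $L^p(\Omega)$-norm precisely because $W_p(Z_\nu^h(t)\sharp\pp, X^*(t)\sharp\pp)\leq \|Z_\nu^h(t)-X^*(t)\|_{L^p}$ (the push-forward of a common law gives an admissible coupling). This lets me treat the mean-field term as if it were an ordinary Lipschitz dependence on the state in $L^p$-norm.

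First I would prove item~(1). Writing $Z_\nu^h(t,\omega) = X_0(\omega) + \int_0^t f(\tau, Z_\nu^h(\tau,\omega), Z_\nu^h(\tau)\sharp\pp, u_\nu^h(\tau,\omega))\,d\tau$, I apply the growth bound $\|f\|\leq C_\infty(1+\|x\|+\mathcal{M}_p(m)+\|u\|)$, take $L^p(\Omega)$-norms, use Minkowski's integral inequality and the fact that $\mathcal{M}_p(Z_\nu^h(\tau)\sharp\pp)=\|Z_\nu^h(\tau)\|_{L^p}$, obtaining
\[
\|Z_\nu^h(t)\|_{L^p}\leq \|X_0\|_{L^p}+C_\infty\int_0^t\bigl(1+2\|Z_\nu^h(\tau)\|_{L^p}+\|u_\nu^h(\tau)\|_{L^p}\bigr)d\tau.
\]
Since $u_\nu^h$ differs from $u^*$ only on $[s,s+h)$ where it equals $\nu$, the control term is bounded uniformly in $h$ by $\|u^*\|_{\up}+\|\nu\|_{L^p}(T)^{1/p}$ after integration, and Gronwall's inequality yields a bound $C_0$ independent of $h$ and $t$.

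For item~(2), on the interval $[s,s+h]$ I use that $Z_\nu^h(s)=X^*(s)$ together with $\|Z_\nu^h(t)-X^*(s)\|_{L^p}\leq\int_s^t\|f(\tau,Z_\nu^h(\tau),Z_\nu^h(\tau)\sharp\pp,\nu)\|_{L^p}\,d\tau$; the integrand is bounded by $C_\infty(1+2C_0+\|\nu\|_{L^p})$ using item~(1), giving the linear-in-$(t-s)$ estimate with constant $C_1$. For item~(3), I subtract the integral equations for $Z_\nu^h$ and $X^*$. The two trajectories satisfy the same equation except on $[s,s+h)$ where the controls differ, so the difference of integrands splits into a Lipschitz part controlled by \eqref{ineq:Lip_f} as $C_x\|Z_\nu^h(\tau)-X^*(\tau)\|_{L^p}+C_mW_p(Z_\nu^h(\tau)\sharp\pp,X^*(\tau)\sharp\pp)\leq(C_x+C_m)\|Z_\nu^h(\tau)-X^*(\tau)\|_{L^p}$, plus a source term supported on $[s,s+h)$ whose $L^p$-norm integrates to at most $C\cdot h$ by items~(1)--(2) and the bound on $\nu$. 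Gronwall then produces item~(3) with constant $C_2$ for $h<\bar h$.

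The main obstacle I anticipate is bookkeeping the mean-field coupling correctly: one must verify at each step that $W_p$-distances between the self-consistent laws are genuinely dominated by $L^p(\Omega)$-distances of the underlying processes (via the common-probability-space coupling), rather than by some weaker narrow-convergence quantity, since only then does $C_m$ enter additively alongside $C_x$ in a form amenable to Gronwall. A secondary subtlety is keeping all constants uniform in $h$; this requires that item~(1) be established first and unconditionally, so that its bound $C_0$ can feed into items~(2) and~(3) without circularity.
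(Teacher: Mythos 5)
Your treatment of items (1) and (2) matches the paper's own argument: the same Gronwall scheme based on the growth condition~\ref{assumption:sublinear}, the identity $\mathcal{M}_p(Z_\nu^h(\tau)\sharp\pp)=\|Z_\nu^h(\tau)\|_{L^p}$, and a uniform-in-$h$ bound on $\|u^h_\nu\|_{\up}$; your coupling remark $W_p(Z_\nu^h(t)\sharp\pp,X^*(t)\sharp\pp)\leq\|Z_\nu^h(t)-X^*(t)\|_{L^p}$ is also exactly how the paper makes $C_m$ enter additively alongside $C_x$ via~\eqref{ineq:Lip_f}.

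The genuine gap is in item (3), in the claim that the source term supported on $[s,s+h)$ ``integrates to at most $C\cdot h$ by items (1)--(2) and the bound on $\nu$.'' The half of the source coming from the control $\nu$ is indeed $O(h)$, since its $L^p$-norm is pointwise bounded by $C_\infty(1+2C_0+\|\nu\|_{L^p})$ thanks to item (1). But the other half involves $u^*(\tau)$ on $[s,s+h)$, and a priori $u^*$ is only in $\up$, i.e.\ $L^p$ in time; H\"older gives only $\int_s^{s+h}\|f(\tau,X^*(\tau),X^*(\tau)\sharp\pp,u^*(\tau))\|_{L^p}d\tau\leq C_\infty h(1+2C_0)+C_\infty h^{1/q}\|u^*\|_{\up}$, which is $O(h^{1/q})$ and not $O(h)$ since $q>1$. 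Equivalently, $\|X^*(s+h)-X^*(s)\|_{L^p}$ need not be $O(h)$ at an arbitrary time $s$, so subtracting the two integral equations does not produce an $O(h)$ initial discrepancy at $t=s+h$. This is precisely where the standing assumption that $s$ is a \emph{regular} point (Definition~\ref{def:regular}) must be used, and your proposal never invokes it. The paper's proof uses regularity twice: $\|u^*(s)\|_{L^p}<\infty$ and the $L^p$ Lebesgue-point property of $\tau\mapsto f(\tau,X^*(\tau),X^*(\tau)\sharp\pp,u^*(\tau))$ at $s$ together yield $\|X^*(s+h)-X^*(s)\|_{L^p}\leq C_1'h$ for $h\leq\bar h$; combined with your item (2) this gives $\|Z_\nu^h(s+h)-X^*(s+h)\|_{L^p}\leq (C_1+C_1')h$, and only then does Gronwall on $[s+h,T]$, where the controls coincide, deliver item (3). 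This is also why the threshold $\bar h$ appears in the statement of the proposition: it comes from the regularity property, and your argument has no source for it.
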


The proof follows the standard scheme proposed for the Pontryagin maximum principle for the finite-dimensional case. However, it contains some technical details. Thus, we put it in~\ref{appendix:sub:prior}. 

\begin{corollary}\label{corollary:almost everywhere}
	There exists a sequence $\{h_n\}_{n=1}^\infty$ such that
	\begin{enumerate}
		\item $\{Z^{h_n}_\nu(t,\omega)\}_{n=1}^\infty$ converges to $X^*(t,\omega)$ for $\lambda\otimes \pp$-a.e. $(t,\omega)\in [s,T]\times\Omega$;
		\item $\{Z^{h_n}_\nu(T,\omega)\}_{n=1}^\infty$ converges to $X^*(T,\omega)$ for $\pp$-a.e. $\omega\in \Omega$.
	\end{enumerate}
\end{corollary}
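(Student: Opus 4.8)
The plan is to upgrade the quantitative $L^p$-estimate in part~3 of Proposition~\ref{prop:Z_h_bounds} into almost-everywhere convergence along a subsequence, invoking the Riesz--Fischer fact that convergence in $L^p$ forces almost-everywhere convergence of some subsequence. I will apply this fact twice and chain the two extractions, since the two conclusions concern different measure spaces.

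First I would record the two $L^p$-convergence statements that underlie everything. Fix any sequence $h_k\downarrow 0$ with $h_k<\bar h$. For the space-time statement, part~3 of Proposition~\ref{prop:Z_h_bounds} together with the definition of $\|\cdot\|_{L^p,s,T}$ gives
\[
\|Z^{h_k}_\nu-X^*\|_{L^p,s,T}^p=\int_s^T\|Z^{h_k}_\nu(t)-X^*(t)\|_{L^p}^p\,dt\leq (T-s)C_2^ph_k^p\xrightarrow[k\to\infty]{}0,
\]
so $Z^{h_k}_\nu\to X^*$ in $L^p([s,T]\times\Omega,\mathcal{B}([s,T])\otimes\mathcal{F},\lambda\otimes\pp;\rd)$. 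Taking $t=T$ in the very same estimate yields $\|Z^{h_k}_\nu(T)-X^*(T)\|_{L^p}\leq C_2h_k\to 0$, i.e.\ convergence in $L^p(\Omega,\mathcal{F},\pp;\rd)$ at the terminal time.

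Then I would carry out the nested extraction. Since the first convergence is in $L^p([s,T]\times\Omega)$, pass to a subsequence $\{h^{(1)}_n\}$ of $\{h_k\}$ along which $Z^{h^{(1)}_n}_\nu(t,\omega)\to X^*(t,\omega)$ for $\lambda\otimes\pp$-a.e.\ $(t,\omega)\in[s,T]\times\Omega$. A subsequence of an $L^p$-convergent sequence still converges to the same limit, so $Z^{h^{(1)}_n}_\nu(T)\to X^*(T)$ in $L^p(\Omega,\mathcal{F},\pp;\rd)$ persists; from this I would extract a further subsequence $\{h_n\}$ along which $Z^{h_n}_\nu(T,\omega)\to X^*(T,\omega)$ for $\pp$-a.e.\ $\omega$. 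Because passing to a subsequence preserves almost-everywhere convergence, the single sequence $\{h_n\}$ satisfies both conclusions at once.

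There is no genuine obstacle here; the one point deserving care is why the two statements cannot be collapsed into one. The space-time a.e.\ convergence on $[s,T]\times\Omega$ carries no information at the single slice $t=T$, as $\{T\}$ is $\lambda$-negligible. This is precisely why the terminal conclusion must be obtained from its own $L^p$-estimate, and why the \emph{nested} (rather than a single) subsequence extraction is needed so that one sequence $\{h_n\}$ serves both purposes simultaneously.
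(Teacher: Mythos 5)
Your proof is correct and follows essentially the same route as the paper: the paper deduces from Proposition~\ref{prop:Z_h_bounds} that $Z^h_\nu\to X^*$ in measure $\lambda\otimes\pp$ (rather than phrasing it as $L^p$-convergence, an immaterial difference), extracts an a.e.-convergent subsequence, and then performs exactly your nested second extraction at $t=T$ using convergence in probability of $Z^{h_n}_\nu(T)$ to $X^*(T)$. Your remark on why the slice $t=T$ needs its own extraction, being $\lambda$-negligible, is a correct articulation of what the paper does implicitly.
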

\begin{proof}
	Proposition~\ref{prop:Z_h_bounds} implies that
	\[\int_0^T\int_\Omega\|Z^h(t,\omega)-X^*(t,\omega)\|^p\pp(d\omega)dt\leq C_2^pTh^p.\] Therefore, due to  \cite[Theorem 4.5.4]{Bogachev}, the family $\{Z^h_\nu\}_{h\in (0,\bar{h}]}$ converges to the function $X^*$ in the measure $\lambda\otimes\pp$ as $h\rightarrow 0$. This and \cite[Theorem 2.2.5]{Bogachev} give that there exists a sequence $\{h_n\}_{n=1}^\infty$ converging to zero such that $\{Z^{h_n}_\nu(t,\omega)\}_{n=1}^\infty$ converges to $X^*(t,\omega)$ for $\lambda\otimes \pp$-a.e. $(t,\omega)$. This proves the first statement of the corollary. To prove the second statement, it suffices to consider the sequence $\{Z^{h_n}_\nu(T)\}$ that converges to $X^*(T)$ in $L^p$ and, thus, in probability $\pp$ and find the subsequence still denoted by $\{h_n\}$ such that $Z_\nu^{h_n}(T)\rightarrow X^*(T)$ $\pp$-a.s.
\end{proof}

Below, we fix the sequence $\{h_n\}_{n=1}^\infty$ satisfying the statements of Corollary~\ref{corollary:almost everywhere}. 

Now let us denote
\begin{equation}\label{intro:variations_f}
	\begin{split}
		\Delta^s_\nu f^*(\omega)\triangleq f(s,&X^*(s,\omega),X^*(s)\sharp \pp,\nu(\omega)) \\&- f(s,X^*(s,\omega),X^*(s)\sharp \pp,u^*(s,\omega)),\end{split}
\end{equation}
\begin{equation}\label{intro:variations_f_0}
	\begin{split}
		\Delta^s_\nu f_0^*(\omega)\triangleq f_0(s,&X^*(s,\omega),X^*(s)\sharp \pp,\nu(\omega)) \\&- f_0(s,X^*(s,\omega),X^*(s)\sharp \pp,u^*(s,\omega))\end{split}
\end{equation} and consider the following  system of ODEs on $[s,T]$:
\begin{equation}\label{eq:derivative:Y}
	\begin{split}
		\frac{d}{dt}Y_\nu(t,\omega)=\nabla_x &f(t,X^*(t,\omega),X^*(t)\sharp \pp,u^*(t,\omega))\cdot Y_\nu(t,\omega)\\+\int_{\Omega}\nabla_m &f(t,X^*(t,\omega),X^*(t)\sharp \pp,X^*(t,\omega'),u^*(t,\omega))Y_\nu(t,\omega') \pp(d\omega'), \\
		Y_\nu(s,\omega)=\Delta^s_\nu f^*&(\omega).
	\end{split}
\end{equation}
\begin{proposition}\label{prop:Y_nu} System~(\ref{eq:derivative:Y}) admits a unique solution $Y_\nu:[s,T]\times\Omega\rightarrow\rd$ such that the mapping assigning to $\omega\in\Omega$ the whole path $Y_\nu(\cdot,\omega)$ lies  in the space $L^p(\Omega,\mathcal{F},\pp;C([s,T];\rd))$. Moreover, there exists a constant $C_3$ such that, for all $t\in [s,T]$,
	\[\|Y_\nu(t)\|_{L^p}\leq C_3.\]
\end{proposition}
\begin{proof}
	The existence and uniqueness result for $Y_\nu(\cdot)$ directly follows from \cite[Theorem A.5 and Proposition A.7]{Cavagnari_et_al_approaches} and the boundness of $\nabla_x f$ and $\nabla_m f$. Furthermore, due to assumption~\ref{assumption:derivative_f}, we have that
	\[\|Y_\nu(t)\|_{L^p}\leq \|\Delta^s_\nu f^*\|_{L^p}+(C_x+C_m)\int_s^t\|Y_\nu(\tau)\|_{L^p}d\tau.\] Applying the Gronwall's inequality we obtain that $\|Y_\nu(t)\|_{L^p}$ is uniformly bounded.
\end{proof}

Below, if $\varrho:\Omega\times\Omega\rightarrow \rds$, $\xi:\Omega\rightarrow\rd$ are measurable, then we denote by $\varrho\diamond\xi$ their partial inner product that is a measurable function from $\Omega$ to $\mathbb{R}$ defined by the rule:
\begin{equation}\label{intro:partial_product}(\varrho\diamond\xi)(\omega)\triangleq \int_{\Omega}\varrho(\omega,\omega')\xi(\omega')\pp(d\omega').\end{equation} 
We will use the same notation if $\varrho:[0,T]\times\Omega\times\Omega\rightarrow \rds$, $\xi:[0,T]\times\Omega\rightarrow\rd$, i.e., in this case
\begin{equation}\label{intro:partial_t_product}(\varrho\diamond\xi)(t,\omega)\triangleq \int_{\Omega}\varrho(t,\omega,\omega')\xi(t,\omega')\pp(d\omega').\end{equation} 

To shorten the notation, we denote, for $t\in [0,T]$, $\omega,\omega'\in\Omega$,
\begin{equation}\label{intro:f_x}
	f^*_x(t,\omega)\triangleq \nabla_xf(t,X^*(t,\omega),X^*(t)\sharp \pp,u^*(t,\omega)),
\end{equation}
\begin{equation*}\label{intro:f_m}
	f^*_m(t,\omega,\omega')\triangleq \nabla_m f(t,X^*(t,\omega),X^*(t)\sharp \pp,X^*(t,\omega'),u^*(t,\omega)).
\end{equation*}
Furthermore, we use convention \eqref{intro:partial_t_product}:
\begin{equation}\label{intro:langle}
	(f_m^*\diamond Y_\nu) (t,\omega)\triangleq \int_{\Omega}f^*_m(t,\omega,\omega')Y_\nu(t,\omega')\pp(d\omega').
\end{equation}

\begin{proposition}\label{prop:derivative_y_h} The following convergence holds true:
	\[\frac{1}{h_n}\|Z^{h_n}_\nu(t)-X^*(t)-h_nY_\nu(t)\|_{L^p}\rightarrow 0\text{ as }n\rightarrow\infty\]  uniformly for $t\in (s,T]$.
\end{proposition} 
The proof is given in \ref{appendix:sub:process}.


Below we evaluate the variation of the running cost. For shortness, we will use the following notation:
\begin{equation}\label{intro:f_0_x}
	f_{0,x}^*(t,\omega)\triangleq\nabla_x f_0(t,X^*(t,\omega),X^*(t)\sharp \pp,u^*(t,\omega)),
\end{equation}
\begin{equation*}\label{intro:f_0_m}
	f_{0,m}^*(t,\omega,\omega')\triangleq\nabla_m f_0(t,X^*(t,\omega),X^*(t)\sharp \pp,X^*(t,\omega'),u^*(t,\omega)).
\end{equation*}

As above, due to  convention \eqref{intro:partial_t_product}, we denote
\begin{equation}\label{intro:langle_0}
	(f_{0,m}^*\diamond Y_\nu) (t,\omega)\triangleq \int_\Omega f_{0,m}^*(t,\omega,\omega') Y_\nu(t,\omega')\pp(d\omega').
\end{equation}


\begin{proposition}\label{prop:derivative_integral} The following equality holds true:
	\[\begin{split}
		\lim_{n\rightarrow\infty}\frac{1}{h_n}\bigg[\int_0^T\mathbb{E}f_0(t,&Z^{h_n}_\nu(t),Z^{h_n}_\nu(t)\sharp \pp,u^{h_n}_\nu(t))dt\\&-\int_0^T\mathbb{E}f_0(t,X^*(t),X^*(t)\sharp \pp,u^*(t))dt\bigg]\\=
		\mathbb{E}\Delta^s_\nu f^*_0&+\int_s^T\mathbb{E}[f_{0,x}^*(t)Y_\nu(t)+ (f_{0,m}^*\diamond Y_\nu)(t)]dt.
	\end{split}\]
\end{proposition}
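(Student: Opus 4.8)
The plan is to compute the limit of the running-cost difference by the same first-order expansion technique already developed for $f$ in Proposition~\ref{prop:derivative_y_h}, now applied to $f_0$. The key structural observation is that the control variation and the state/measure variation contribute additively and can be separated across the interval $[0,T]$: on $[0,s)$ the processes $Z^{h_n}_\nu$ and $u^{h_n}_\nu$ agree with $X^*$ and $u^*$, so that piece contributes nothing; on the short interval $[s,s+h_n)$ the control is switched to $\nu$, producing the spike term $\expect \Delta^s_\nu f_0^*$; and on $[s+h_n,T]$ the control coincides with $u^*$ but the state and the pushforward measure differ from the optimal ones by an amount that, after dividing by $h_n$, is governed by $Y_\nu$. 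My first step would therefore be to write the difference as a sum of integrals over these three subintervals.

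\medskip\noindent\textbf{The spike interval.} On $[s,s+h_n)$ I would expand
\[
\frac{1}{h_n}\int_s^{s+h_n}\expect\big[f_0(t,Z^{h_n}_\nu(t),Z^{h_n}_\nu(t)\sharp\pp,\nu)-f_0(t,X^*(t),X^*(t)\sharp\pp,u^*(t))\big]dt,
\]
and show it converges to $\expect\Delta^s_\nu f_0^*$. Since $\|Z^{h_n}_\nu(t)-X^*(s)\|_{L^p}\leq C_1(t-s)$ on this interval (second statement of Proposition~\ref{prop:Z_h_bounds}), both $Z^{h_n}_\nu(t)$ and $X^*(t)$ are close to $X^*(s)$ there, so by continuity of $f_0$ and the regularity of $s$ the averaged integrand tends to $f_0(s,X^*(s),X^*(s)\sharp\pp,\nu)-f_0(s,X^*(s),X^*(s)\sharp\pp,u^*(s))$ in $L^1(\Omega)$; the regular-point hypothesis is precisely what licenses replacing the time-average of $f_0(\tau,X^*(\tau),\dots,u^*(\tau))$ by its value at $s$. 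Care is needed because $f_0$ grows like $\|u\|^p$, so I would lean on assumption~\ref{assumption:sublinear} together with $\|\nu\|_{L^p}<\infty$ to keep all integrands uniformly $\pp$-integrable.

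\medskip\noindent\textbf{The main interval.} On $[s+h_n,T]$ the control is $u^*$, so the integrand is $f_0(t,Z^{h_n}_\nu(t),Z^{h_n}_\nu(t)\sharp\pp,u^*(t))-f_0(t,X^*(t),X^*(t)\sharp\pp,u^*(t))$. I would split this across the state and measure arguments exactly as in~\eqref{ineq:Z_h_Z_star_Lp:second:Holder}, write each difference via the fundamental theorem of calculus (an $r$-integral of $\nabla_xf_0$, and a $\theta,r$-double integral of $\nabla_mf_0$ along the interpolations $m^n(\theta,\tau)$ and $X^*+r(Z^{h_n}_\nu-X^*)$), and compare against $h_n\,[f_{0,x}^*(t)Y_\nu(t)+\langle f_{0,m}^*,Y_\nu\rangle(t)]$. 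Dividing by $h_n$ and using Proposition~\ref{prop:derivative_y_h} to replace $(Z^{h_n}_\nu-X^*)/h_n$ by $Y_\nu$ in the limit, the error terms are controlled by Hölder's inequality, the almost-everywhere convergence of $Z^{h_n}_\nu$ to $X^*$ from Corollary~\ref{corollary:almost everywhere}, and the dominated convergence theorem; the role played earlier by the uniform bound $\|\nabla_xf,\nabla_mf\|\leq C$ is now taken over by the $L^q$-integrability and uniform moment bounds established in Lemma~\ref{lm:f_0_derivative_bounds}, which furnish the dominating functions.

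\medskip\noindent\textbf{Main obstacle.} The delicate point is that, unlike $\nabla_xf$ and $\nabla_mf$ which are \emph{bounded} by assumption~\ref{assumption:derivative_f}, the derivatives $\nabla_xf_0$ and $\nabla_mf_0$ only satisfy the $q$-th power growth bounds of assumption~\ref{assumption:derivative_f_0}. Consequently the analogues of the increments $\varpi^n_x,\varpi^n_m$ are no longer dominated by a constant, and the dominated-convergence argument must instead produce an $L^q$-dominating function from the moment bounds on $X^*$, $Z^{h_n}_\nu$ (uniform in $n$ by Proposition~\ref{prop:Z_h_bounds}) and $u^*$. The crucial verification is that these growth estimates, combined with Lemma~\ref{lm:f_0_derivative_bounds} and Hölder's inequality with conjugate exponents $p,q$, indeed yield a uniform integrable majorant so that the vanishing of the first-order error terms survives the division by $h_n$. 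Once this uniform integrability is secured, passing to the limit is routine and the three contributions assemble into the claimed formula.
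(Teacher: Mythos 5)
Your proposal is correct and follows essentially the same route as the paper's proof: the same decomposition into spike and main intervals, the fundamental-theorem-of-calculus expansions in $x$ and $m$ compared against $h_n[f_{0,x}^*Y_\nu+\langle f_{0,m}^*,Y_\nu\rangle]$, Proposition~\ref{prop:derivative_y_h} for the first-order replacement of $(Z^{h_n}_\nu-X^*)/h_n$ by $Y_\nu$, and dominated convergence with $L^q$ majorants supplied by assumption~\ref{assumption:derivative_f_0} and Lemma~\ref{lm:f_0_derivative_bounds} in place of the constant bounds available for $f$, which is precisely the obstacle the paper also isolates. The one bookkeeping item you leave implicit --- the sliver $\int_s^{s+h_n}\mathbb{E}\bigl|f_{0,x}^*(t)Y_\nu(t)+\langle f_{0,m}^*,Y_\nu\rangle(t)\bigr|\,dt$ arising because the main-interval comparison produces $\int_{s+h_n}^T$ rather than the claimed $\int_s^T$ --- is the paper's term $G_n^{(2)}$, and it vanishes by the H\"older bound $\|f_{0,x}^*\|_{L^q,s,s+h_n}\|Y_\nu\|_{L^p,s,s+h_n}\leq C h_n^{1/p}$, exactly the uniform-integrability control your argument already provides.
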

We prove this statement in \ref{appendix:sub:integral}.

Now let us examine the limit behavior of the terminal payoff. To simplify notation, put
\begin{equation}\label{intro:sigma_x}
	\sigma_x^*(\omega)\triangleq\nabla_x \sigma(X^*(T,\omega),X^*(T)\sharp \pp),
\end{equation}
\begin{equation}\label{intro:sigma_m}
	\sigma_m^*(\omega,\omega')\triangleq\nabla_m \sigma(X^*(T,\omega),X^*(T)\sharp \pp,X^*(T,\omega')).
\end{equation}
Recall designation \eqref{intro:partial_product}. In this case, we have that
\begin{equation}\label{intro:langle_sigma}
	(\sigma_m^*\diamond Y_\nu) (\omega)\triangleq \int_\Omega \sigma_m^*(\omega,\omega') Y_\nu(\omega')\pp(d\omega').
\end{equation}

\begin{proposition}\label{prop:derivative_terminal} 
	\[\begin{split}
		\lim_{n\rightarrow\infty}\frac{1}{h_n}\mathbb{E}|\sigma(Z^{h_n}_\nu(T)&,Z^{h_n}_\nu(T)\sharp \pp)\\&-\sigma(X^*(T),X^*(T)\sharp \pp)-h_n[\sigma_x^*+ \sigma_m^*\diamond Y_\nu]|=0.\end{split}\]
\end{proposition}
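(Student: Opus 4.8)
The plan is to treat this as the terminal-cost analogue of Proposition~\ref{prop:derivative_integral}; the argument has the same structure but is simpler, since no integration in time is involved. Write for brevity $Z_n\triangleq Z^{h_n}_\nu(T)$ and $X\triangleq X^*(T)$, and recall that $\|Z_n-X\|_{L^p}\leq C_2h_n$ by Proposition~\ref{prop:Z_h_bounds}, that $\tfrac{1}{h_n}\|Z_n-X-h_nY_\nu(T)\|_{L^p}\to 0$ by Proposition~\ref{prop:derivative_y_h}, and that $Z_n(\omega)\to X(\omega)$ for $\pp$-a.e.\ $\omega$ by Corollary~\ref{corollary:almost everywhere}. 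I would first perform a double telescoping, separating the variation in the state argument from the variation in the measure argument:
\[
\sigma(Z_n,Z_n\sharp\pp)-\sigma(X,X\sharp\pp)=[\sigma(Z_n,Z_n\sharp\pp)-\sigma(X,Z_n\sharp\pp)]+[\sigma(X,Z_n\sharp\pp)-\sigma(X,X\sharp\pp)],
\]
and then match the expectation of the first bracket with $h_n\expect[\sigma_x^*Y_\nu(T)]$ and that of the second with $h_n\expect\langle\sigma_m^*,Y_\nu\rangle$ (the derivative term denoted $\sigma_x^*$ in the statement being naturally $\sigma_x^*Y_\nu(T)$).

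For the state bracket I would use that $\sigma$ is $C^1$ in $x$ to write $\sigma(Z_n,Z_n\sharp\pp)-\sigma(X,Z_n\sharp\pp)=\int_0^1\nabla_x\sigma(X+r(Z_n-X),Z_n\sharp\pp)(Z_n-X)\,dr$, subtract $h_n\sigma_x^*Y_\nu(T)$, and split the result into the piece $\int_0^1\varpi^n_{\sigma,x}(r)\,dr\cdot(Z_n-X)$ and the piece $\sigma_x^*(Z_n-X-h_nY_\nu(T))$, where $\varpi^n_{\sigma,x}(r,\omega)\triangleq\nabla_x\sigma(X(\omega)+r(Z_n(\omega)-X(\omega)),Z_n\sharp\pp)-\sigma_x^*(\omega)$. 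Taking expectations and using Hölder's inequality, the first piece is bounded by $\big[\expect\int_0^1\|\varpi^n_{\sigma,x}(r)\|^q\,dr\big]^{1/q}\|Z_n-X\|_{L^p}$ and the second by $\|\sigma_x^*\|_{L^q}\,\|Z_n-X-h_nY_\nu(T)\|_{L^p}$; after dividing by $h_n$ the first is $o(1)\cdot O(1)$ and the second is $O(1)\cdot o(1)$, so both vanish. The measure bracket is handled identically: using the flat-derivative expansion of Definition~\ref{def:var_derivatve}, the push-forward formula, and then $C^1$-regularity in $y$ (Definition~\ref{def:L_derivative}), I would express it as $\int_0^1\!\int_\Omega\!\int_0^1\nabla_m\sigma(X,m^n(\theta),X(\omega')+r(Z_n(\omega')-X(\omega')))(Z_n(\omega')-X(\omega'))\,dr\,\pp(d\omega')\,d\theta$ with $m^n(\theta)\triangleq\theta Z_n\sharp\pp+(1-\theta)X\sharp\pp$, subtract $h_n\langle\sigma_m^*,Y_\nu\rangle$, introduce the corresponding remainder $\varpi^n_{\sigma,m}$, and close the estimate by Hölder exactly as in the state case, again invoking Proposition~\ref{prop:derivative_y_h} for the $\sigma_m^*$ piece.

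The hard part will be verifying that $\|\sigma_x^*\|_{L^q}$ and $\|\sigma_m^*\|_{L^q}$ are finite and that $\big[\expect\int_0^1\|\varpi^n_{\sigma,x}(r)\|^q\,dr\big]^{1/q}\to0$ together with its $\varpi^n_{\sigma,m}$ counterpart. This is exactly where the present statement departs from Proposition~\ref{prop:derivative_integral}: there $\nabla_x f$ and $\nabla_m f$ are bounded by constants (assumption~\ref{assumption:derivative_f}), whereas here $\nabla_x\sigma$ and $\nabla_m\sigma$ only satisfy the $p$-growth bounds of assumption~\ref{assumption:derivative_sigma}, so no constant domination is available. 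Finiteness of $\|\sigma_x^*\|_{L^q}$ and $\|\sigma_m^*\|_{L^q}$ follows from those growth bounds together with $X^*(T)\in L^p(\Omega,\mathcal{F},\pp;\rd)$, which holds because $\|X^*(T)\|_{L^p}\leq\|X^*\|_{\xp}$. For the remainders, $\varpi^n_{\sigma,x}\to0$ $\pp$-a.e.\ and $\varpi^n_{\sigma,m}\to0$ $\pp\otimes\pp$-a.e.\ by continuity of $\nabla_x\sigma,\nabla_m\sigma$ and the a.e.\ convergence $Z_n\to X$ from Corollary~\ref{corollary:almost everywhere}; to pass to $L^q$ I would replace the dominated-convergence step by a \emph{uniform integrability} (Vitali) argument, the uniform integrability of $\{\|\varpi^n_{\sigma,x}\|^q\}$ and $\{\|\varpi^n_{\sigma,m}\|^q\}$ being a consequence of the growth bounds together with the uniform estimate $\|Z_n\|_{L^p}\leq C_0$ of Proposition~\ref{prop:Z_h_bounds} and the $L^p$-convergence $Z_n\to X$. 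Collecting the two brackets then yields the claimed limit.
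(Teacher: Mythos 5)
Your proposal is correct and takes essentially the route the paper itself prescribes: the paper omits this proof, stating that it mimics Steps 4 and 5 of the proof of Proposition~\ref{prop:derivative_integral} combined with the $\pp$-a.e.\ convergence $Z^{h_n}_\nu(T)\to X^*(T)$ of Corollary~\ref{corollary:almost everywhere}, and your two-bracket decomposition with the Taylor/flat-derivative expansions, H\"older estimates, and a.e.-vanishing remainders $\varpi^n_{\sigma,x}$, $\varpi^n_{\sigma,m}$ is precisely that argument (including your correct reading of the statement's $\sigma_x^*$ as $\sigma_x^*Y_\nu(T)$). The one point you flag as a genuine departure is not actually one: the uniform-integrability (Vitali) step is already implicit in Steps 4 and 5, since there $\nabla_x f_0$ and $\nabla_m f_0$ likewise satisfy only the $q$-growth bounds of~\ref{assumption:derivative_f_0} rather than the uniform boundedness of $\nabla_x f$, $\nabla_m f$ from~\ref{assumption:derivative_f}.
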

We omit the proof of this proposition since it mimics   Steps 4 and 5 in the proof of Proposition~\ref{prop:derivative_integral} (see \ref{appendix:sub:integral}) and relies on the fact that $\{h_n\}$ is such that $\{Z^{h_n}_\nu(T)\}$ converges to $X^*(T)$ $\pp$-a.e. (see Corollary~\ref{corollary:almost everywhere}).

\section{Proof of the Pontryagin maximum principle in the Lagrangian form}\label{sect:proof_PMP_Lagrangian}
\begin{proof}[Proof of Theorem~\ref{th:PMP_Lagrangian}]
	In the proof we use notation introduced in~\eqref{intro:variations_f}--\eqref{eq:derivative:Y} and (\ref{intro:f_x})--(\ref{intro:langle_sigma}). Moreover, we assume that $s\in \mathcal{T}$ satisfies conditions of Proposition \ref{prop:N_T}, while the sequence $\{h_n\}_{n=1}^\infty$ is chosen such that conditions of Corollary \ref{corollary:almost everywhere} holds true. 
	
	First, we consider that case where $\nu \in\mathcal{N}$ that was also introduced in Proposition~\ref{prop:N_T}.
	
	By the third statement of Proposition~\ref{prop:Z_h_bounds},
	\[\|Z^{h_n}_\nu(t)-X^*(t)\|_{L^p}\leq C_1h_n,\] while  $Z^{h_n}(0)=X^*(0)$. Moreover, 
	\[(\lambda\otimes\pp)(u^{h_n}_\nu\neq u^*)\leq(\lambda\otimes\pp)([s,s+h_n]\times \Omega)=h_n.\]
	Thus, the assumption that $(X^*,u^*)$ is a  Pontryagin local $L^p$-minimizer at $X_0$ implies that, for sufficiently large $n$,
	\[J_L(X^*,u^*)\leq J_L(Z^{h_n}_\nu,u^{h_n}_\nu).\] This  yields the inequality
	\begin{equation}\label{ineq:0_leq_J_L}
		0\leq \lim_{n\rightarrow\infty}\frac{1}{h_n}[J_L(Z^{h_n}_\nu,u^{h_n}_\nu)-J_L(X^*,u^*)].
	\end{equation} The existence of  the limit above is due to Propositions ~\ref{prop:derivative_integral},~\ref{prop:derivative_terminal}. Using them and definition of the functional $J_L$ (see~\eqref{intro:def_J_L}), we compute 
	\begin{equation}\label{equality:limit_h_n_Z_nu}\begin{split}
			\lim_{n\rightarrow\infty}\frac{1}{h_n}\big[J_L(Z^{h_n}_\nu&,u^{h_n}_\nu)-J_L(X^*,u^*)\big]\\=\int_\Omega\Delta^s_\nu &f^*_0(\omega)\pp(d\omega)+\int_s^T\int_\Omega f_{0,x}^*(t,\omega)Y_\nu(\omega)\pp(d\omega)dt\\&+ \int_s^T\int_\Omega\int_\Omega f_{0,m}^*(t,\omega,\omega')Y_\nu(t,\omega')\pp(d\omega')\pp(d\omega)dt\\&+
			\int_\Omega\sigma_x(\omega)Y_\nu(T,\omega)\pp(d\omega)\\&+\int_{\Omega}\int_\Omega\sigma^*_m(\omega,\omega')Y_\nu(T,\omega')\pp(d\omega')\pp(d\omega).
		\end{split} 
	\end{equation}
	Using the Fubini theorem and renaming variables, we have, for each $t\in [s,T]$, 
	\[\begin{split}\int_\Omega\int_\Omega f_{0,m}^*&(t,\omega,\omega')Y_\nu(t,\omega')\pp(d\omega')\pp(d\omega)\\&=\int_\Omega\int_\Omega f_{0,m}^*(t,\omega,\omega')Y_\nu(t,\omega')\pp(d\omega)\pp(d\omega')\\
		&=\int_\Omega\Bigg[\int_\Omega f_{0,m}^*(t,\omega',\omega)\pp(d\omega')\Bigg]Y_\nu(t,\omega)\pp(d\omega).\end{split}\] Similarly, 
	\[
	\begin{split}
		\int_{\Omega}\int_\Omega\sigma^*_m&(\omega,\omega')Y_\nu(T,\omega')\pp(d\omega')\pp(d\omega)\\&=\int_{\Omega}\int_\Omega\sigma^*_m(\omega,\omega')Y_\nu(T,\omega')\pp(d\omega)\pp(d\omega')\\&=
		\int_{\Omega}\Bigg[\int_\Omega\sigma^*_m(\omega',\omega)\pp(d\omega')\Bigg]Y_\nu(T,\omega)\pp(d\omega).
	\end{split}
	\]
	Substituting these two equalities into \eqref{equality:limit_h_n_Z_nu}, we obtain
	\[\begin{split}
		\lim_{n\rightarrow\infty}\frac{1}{h_n}[J_L(Z^{h_n}_\nu,u^{h_n}_\nu)-J_L(X^*,u^*)]&=\int_\Omega\Delta^s_\nu f^*_0(\omega)\pp(d\omega)\\+\int_s^T\int_\Omega \bigg[f_{0,x}^*(t,\omega)+\int_\Omega f_{0,m}^*&(t,\omega',\omega)\pp(d\omega')\bigg]Y_\nu(t,\omega)\pp(d\omega)dt\\+
		\int_\Omega\bigg[\sigma_x(\omega)+&\int_\Omega\sigma^*_m(\omega',\omega)\pp(d\omega')\bigg]Y_\nu(T,\omega)\pp(d\omega).
	\end{split} \]
	We define $\Psi$ as the solution of the following boundary value problem:
	\begin{equation}\label{intro:Psi}\begin{split}
			\frac{d}{dt}\Psi(t,\omega)=-\Psi(t,\omega) f_{x}^*(t,&\omega)-\int_\Omega \Psi(t,\omega')f_{m}^*(t,\omega',\omega)\pp(d\omega')\\&+ f_{0,x}^*(t,\omega)+\int_\Omega f_{0,m}^*(t,\omega',\omega)\pp(d\omega'),\\ 
			\Psi(T,\omega)=-\sigma_x(\omega)-\int_\Omega\sigma&{}^*_m(\omega',\omega)\pp(d\omega').
	\end{split}\end{equation} 
	
	The existence and uniqueness of a function $\Psi\in \mathcal{Y}^q$ solving~\eqref{intro:Psi} can be obtained from \cite[Theorem A.5 and Proposition A.7]{Cavagnari_et_al_approaches} due to the inclusion that $X^*\in\xp$ and assumptions \ref{assumption:derivative_f_0}, \ref{assumption:derivative_sigma} those imply the fulfillment of conditions of \cite[Theorem A.5 and Proposition A.7]{Cavagnari_et_al_approaches} for the exponent dual to $p$.  Notice that the choice of $\Psi$ gives that the costate equation and the transversality condition hold true.
	
	Now, let us consider the maximization of the Hamiltonian condition.
	Expressing $f_{0,x}^*(t,\omega)+\int_\Omega f_{0,m}^*(t,\omega',\omega)\pp(d\omega')$ from~\eqref{intro:Psi} and changing the order of integration once more, we have
	\[\begin{split}
		\lim_{n\rightarrow\infty}\frac{1}{h_n}[J_L&(Z^{h_n}_\nu,u^{h_n}_\nu)-J_L(X^*,u^*)]\\=\int_\Omega\Delta^s_\nu &f^*_0(\omega)\pp(d\omega)+\int_\Omega\int_s^T\frac{d}{dt}\Psi(t,\omega)Y_\nu(t,\omega)dt\pp(d\omega)
		\\&+\int_\Omega\int_s^T 
		\Psi(t,\omega) f_{x}^*(t,\omega)Y_\nu(t,\omega)dt\pp(d\omega)\\
		&+\int_\Omega\int_s^T\Psi(t,\omega)\int_\Omega f_{m}^*(t,\omega,\omega')Y_\nu(t,\omega')\pp(d\omega')dt\pp(d\omega)\\&+
		\int_\Omega\bigg[\sigma_x(\omega)+\int_\Omega\sigma^*_m(\omega',\omega)\pp(d\omega')\bigg]Y_\nu(T,\omega)\pp(d\omega).
	\end{split} \] Taking into account the fact that
	\[\frac{d}{dt}Y_\nu(t,\omega)=f^*_x(t,\omega)Y_\nu(t,\omega)+\int_\Omega f_m^*(t,\omega,\omega')Y_\nu(t,\omega')\pp(d\omega'),\] we arrive at the following equality
	\[\begin{split}
		\lim_{n\rightarrow\infty}\frac{1}{h_n}[J_L&(Z^{h_n}_\nu,u^{h_n}_\nu)-J_L(X^*,u^*)]\\=\int_\Omega\Delta^s_\nu &f^*_0(\omega)\pp(d\omega)+\int_\Omega\int_s^T\frac{d}{dt}\Psi(t,\omega)Y_\nu(t,\omega)dt\pp(d\omega)
		\\&+\int_\Omega\int_s^T\Psi(t,\omega)\frac{d}{dt}Y_\nu(t,\omega)dt\pp(d\omega)\\&+
		\int_\Omega\bigg[\sigma_x(\omega)+\int_\Omega\sigma^*_m(\omega',\omega)\pp(d\omega')\bigg]Y_\nu(T,\omega)\pp(d\omega).
	\end{split} \]
	Since $\Psi(T,\omega)=-\sigma_x(\omega)-\int_\Omega\sigma^*_m(\omega',\omega)\pp(d\omega')$, $Y_\nu(s,\omega)=\Delta_\nu^s f^*(\omega)$,  the integration by part formula yields that
	\[\lim_{n\rightarrow\infty}\frac{1}{h_n}[J_L(Z^h_\nu,u^h_\nu)-J_L(X^*,u^*)]=\expect[\Delta^s_\nu f^*_0-\Psi(s)\Delta^s_\nu f^*(s)].\] Recall that (see~(\ref{ineq:0_leq_J_L})) this limit is nonnegative, while (see~(\ref{intro:hamiltinian}),~(\ref{intro:variations_f}),~(\ref{intro:variations_f_0}))
	\[\begin{split}\Delta^s_\nu f^*_0(\omega)-\Psi(s,\omega)&\Delta^s_\nu f^*(\omega)\\=H(s,X^*(&s,\omega),\Psi(s,\omega),X^*(s)\sharp \pp,u^*(s,\omega))\\&-H(s,X^*(s,\omega),\Psi(s,\omega),X^*(s)\sharp \pp,\nu(\omega))\end{split}\] Hence, for each $\nu\in\mathcal{N}$,
	\begin{equation}\label{ineq:Hamiltonian_integral_N}
		\expect H(s,X^*(s),\Psi(s),X^*(s)\sharp \pp,u^*(s))\geq \expect H(s,X^*(s),\Psi(s),X^*(s)\sharp \pp,\nu).
	\end{equation} 
	
	Now let us derive the integral form of  maximization condition (see~\eqref{condition:maximum_integral}). 
	
	If $\nu$ is an arbitrary element of $L^p(\Omega,\mathcal{F},\pp;U)$, then, by construction of the set $\mathcal{N}$, there exists a sequence $\{\nu_k\}_{k=1}^\infty\subset \mathcal{N}$ that converges to $\nu$ in $L^p(\Omega,\mathcal{F},\pp;U)$. From \cite[Theorems 4.5.4, Theorem 2.2.5(i)]{Bogachev}, without loss of generality, we can assume that $\{\nu_k\}_{k=1}^\infty$ converge to $\nu$ $\pp$-a.s. This implies that 
	\begin{equation}\label{convergence:H_k}
		\begin{split}
			H(s,X^*(s),\Psi(s),&X^*(s)\sharp \pp,\nu_k)\rightarrow\\ &H(s,X^*(s),\Psi(s),X^*(s)\sharp \pp,\nu)\ \ \text{ as }n\rightarrow\infty,\ \  \pp\text{-a.s.}\end{split}
	\end{equation}
	Furthermore, denote 
	\[\begin{split} \overline{H}_k\triangleq C_\infty^0&(1+\|X^*\|_{\xp}^p)\\&+ C_\infty(1+\|X^*\|_{\xp})\|\Psi(s)\|+2q^{-1}C_\infty\|\Psi(s)\|^q \\&+(C_\infty p^{-1}+C_\infty^0) \|X^*(s)\|^p+(C_\infty p^{-1}+C_\infty^0)\|\nu_k\|.\end{split}\] 
	Notice that \cite[Theorem 4.5.4]{Bogachev} implies that the sequence of random variables $\{\overline{H}_k\}_{k=1}^\infty$ is uniformly integrable.
	Due to assumption \ref{assumption:sublinear}, inequality~\eqref{ineq:M_X_t_X_xp} and the Young's inequality, we have that
	\begin{equation}\label{ineq:H_b}
		\Big|H(s,X^*(s),\Psi(s),X^*(s)\sharp \pp,\nu_k)\Big| \leq \overline{H}_k.\end{equation}  Recall that the uniform integrability of a sequence of functions is equivalent to the fact that the $L^1$-norm of  function from this sequence are uniformly bounded while the integrals are uniformly absolutely continuous \cite[Proposition 4.5.3]{Bogachev}. Using this fact and \eqref{ineq:H_b}, we have that the sequence $\{H(s,X^*(s),\Psi(s),X^*(s)\sharp \pp,\nu_k)\}_{k=1}^\infty$ is uniformly integrable. Therefore, the convergence of the sequence $\{H(s,X^*(s),\Psi(s),X^*(s)\sharp \pp,\nu_k)\}_{k=1}^\infty$ to $H(s,X^*(s),\Psi(s),X^*(s)\sharp \pp,\nu)$ $\pp$-a.s. yields (see \cite[Theorem 4.5.4]{Bogachev}) that
	\[\begin{split}\expect H(s,X^*(s),\Psi(s),&X^*(s)\sharp \pp,\nu_k)\rightarrow\\ &\expect H(s,X^*(s),\Psi(s),X^*(s)\sharp \pp,\nu)\ \ \text{ as }n\rightarrow\infty .\end{split}\] This and \eqref{ineq:Hamiltonian_integral_N} imply that, for each $\nu\in L^p(\Omega,\mathcal{F},\pp;U)$,
	\begin{equation}\label{ineq:Hamiltonian_integral}
		\expect H(s,X^*(s),\Psi(s),X^*(s)\sharp \pp,u^*(s))\geq \expect H(s,X^*(s),\Psi(s),X^*(s)\sharp \pp,\nu).
	\end{equation}
	
	This is integral maximization condition~\eqref{condition:maximum_integral}.
	
	It remains to show that it is equivalent to  local maximization condition~(\ref{condition:maximum_local}). First notice that~(\ref{condition:maximum_local})  obviously implies~\eqref{condition:maximum_integral}. To prove the converse implication~\eqref{condition:maximum_integral}$\Rightarrow$(\ref{condition:maximum_local}), we assume that~\eqref{condition:maximum_integral} is fulfilled, while~(\ref{condition:maximum_local}) is violated.
	Given natural numbers $N$ and $M$, let  $\Xi_{N,M}\in\mathcal{F}$ be  such that, for each $\omega\in\Xi_{N,M}$,
	\begin{equation*}\label{ineq:Hamiltonian_1_k}
		\begin{split}
			H(s,X^*(s,\omega),\Psi(s,\omega),X^*(s)\sharp \pp,u^*(s,\omega)&)+2N^{-1}\\\leq
			\sup\Big\{H(s,X^*(s,\omega),\Psi(s,\omega),&X^*(s)\sharp \pp,u): \\  u\in U,\, &\|u\|^p\leq \|u^*(s,\omega)\|^p+ M\Big\}.
	\end{split}\end{equation*} Since, we assumed that condition \eqref{condition:maximum_local} is violated, it holds that \[\pp\Bigg[\bigcup_{N=1}^\infty\bigcup_{M=1}^\infty \Xi_{N,M}\Bigg]>0.\] This, in particular, means that, for some $N$ and $M$, 
	\[\pp(\Xi_{N,M})>0.\] From now, we fix $N$ and $M$ satisfying this condition. Thus, the  multivalued mapping $\mathcal{G}:\Xi_{N,M}\rightrightarrows U$ that assigns to each $\omega\in \Xi_{N,M}$ the set 
	\[\begin{split}
		\mathcal{G}(\omega)\triangleq \bigg\{u\in U:H(s,X^*(s,\omega)&,X^*(s)\sharp \pp,\Psi(s,\omega),u^*(s,\omega))+N^{-1}\\&\leq  H(s,X^*(s,\omega),X^*(s)\sharp \pp,\Psi(s,\omega),u),\\ \|u\|^p\leq \|u^*&(s,\omega)\|^p+ M\bigg\}\end{split}\] has nonempty images. Moreover, since the mappings those assign to a pair $(\omega,u)\in\Xi_{N,M}\times U$ the values  
	\begin{itemize}
		\item $ H(s,X^*(s,\omega),X^*(s)\sharp \pp,\Psi(s,\omega),u^*(s,\omega))$,
		\item $\|u^*(s,\omega)\|^p$,
		\item  $ H(s,X^*(s,\omega),X^*(s)\sharp \pp,\Psi(s,\omega),u)$
	\end{itemize} respectively are $\mathcal{F}\otimes \mathcal{B}(U)/\mathcal{B}(\mathbb{R})$-measurable, the graph of $\mathcal{G}$ belongs to $\mathcal{F}\otimes \mathcal{B}(U)$.  By the Aumann selection theorem
	\cite[Corollary 18.27]{Infinite_dimensional}, one can find  a function $\hat{u}:\Xi_{N,M}\rightarrow U$ that is $\mathcal{F}/\mathcal{B}(U)$-measurable  for $\pp$-a.e. $\omega\in\Xi_{N,M}$ satisfies the inclusion $\hat{u}(\omega)\in\mathcal{G}(\omega)$.  Another way to find this function is to use \cite[Theorem 6.9.13]{Bogachev} that gives a $\mathcal{F}_{\pp}/\mathcal{B}(U)$-measurable function $\tilde{u}:\Xi_{N,M}\rightarrow U$ that is a selector of $\mathcal{G}$. Recall that  $\mathcal{F}_{\pp}$ stands for the completion of $\mathcal{F}$ w.r.t.  the probability $\pp$. The desired $\mathcal{F}/\mathcal{B}(U)$-measurable function $\hat{u}:\Xi_{N,M}\rightarrow U$ such that, for $\pp$-a.e. $\omega\in\Xi_{N,M}$, $\hat{u}(\omega)=\tilde{u}(\omega)\in\mathcal{G}(\omega)$ exists due to  \cite[Corollary 6.5.6]{Bogachev} and the fact $\mathcal{B}(U)$ is countably generated. The latter directly follows from assumption~\ref{assumption:Uclosed} (see \cite[Example 6.5.2]{Bogachev}). 
	
	Put
	\[\hat{\nu}\triangleq\left\{\begin{array}{cc}
		u^*(s,\omega), & \omega\in \Omega\setminus \Xi_{N,M},  \\
		\hat{u}(\omega), & \omega\in\Xi_{N,M}. 
	\end{array}\right.\] First, notice that $\nu\in L^p(\Omega,\mathcal{F},\pp;U)$. Indeed,
	\[\begin{split}
		\|\hat\nu\|_{L^p}^p=\expect\big[\|u^*(s)\|^p&\mathbbm{1}_{\Omega\setminus\Xi_N}\big] +\expect\Big[\|\hat{u}\|^p\mathbbm{1}_{\Xi_{N,M}}\Big]\\\leq \|&u^*(s)\|_{L^p}^p+M\pp(\Xi_{N,M})<\infty.\end{split}\]
	Furthermore, by construction, we have that
	\[\begin{split}
		H(s,X^*(s,\omega),\Psi(s,&\omega),X^*(s)\sharp \pp,\hat{\nu}(\omega))\\&=H(s,X^*(s,\omega),\Psi(s,\omega),X^*(s)\sharp \pp,u^*(s,\omega))\end{split}\] for $\omega\in \Omega\setminus \Xi_{N,M}$, and
	\[\begin{split}H(s,X^*(s,\omega),\Psi(s,&\omega),X^*(s)\sharp \pp,\hat{\nu}(\omega))\\&\geq H(s,X^*(s,\omega),\Psi(s,\omega),X^*(s)\sharp \pp,u^*(s,\omega))+N^{-1}\end{split}\] if $ \omega\in\Xi_{N,M}$. Hence,
	\[   \begin{split}
		\int_\Omega H(s,&X^*(s,\omega),\Psi(s,\omega),X^*(s)\sharp \pp,u^*(s,\omega))\pp(d\omega)+N^{-1} \pp(\Xi_{N,M})\\\leq &\int_\Omega H(s,X^*(s,\omega),\Psi(s,\omega),X^*(s)\sharp \pp,\hat{\nu}(\omega))\pp(d\omega).
	\end{split} \] Since  $P(\Xi_{N,M})>0$, this contradicts~(\ref{ineq:Hamiltonian_integral}). 
	
	Therefore,~\eqref{condition:maximum_integral} yields that~(\ref{condition:maximum_local}) holds true $\pp$-a.s. This completes the proof.
	
\end{proof}


\section{Kantorovich approach} \label{sect:Kantorovich}
In this section, we introduce the concept of local minima within the Kantorovich formulation of the mean field type control problem (see Definition~\ref{def:Kantorovich_minimizer}), examines its link with the Lagrangian approach (see Theorem~\ref{th:Kantorovich_min_Lagrangian}) and derive the Pontryagin maximum principle for the Kantorovich formalization (see Theorem~\ref{th:PMP_Kantorovich}). Certainly, within this section, we assume that conditions~\ref{assumption:Uclosed}--\ref{assumption:derivative_sigma} are in force. 

\subsection{Kantorovich admissible processes}
\begin{definition}\label{def:Kantorovich_process}
	We say that a pair $(\eta,u_K)$, where $\eta\in \mathcal{P}^p(\Gamma)$, $u_K\in L^p([0,T]\times \Gamma,\mathscr{B}_T\otimes\mathcal{B}(\Gamma),\lambda\otimes\eta;U)$,
	is a Kantorovich control process if 
	\begin{itemize}
		\item $\eta$ is concentrated on the set of absolutely continuous curves;
		\item  $\eta$-a.e. $\gamma\in \Gamma$ satisfies the differential equation
		\begin{equation}\label{eq:gamma}
			\frac{d}{dt}{\gamma}(t)=f(t,\gamma(t),e_t\sharp \eta,u_K(t,\gamma)).
	\end{equation}\end{itemize}
\end{definition}

The outcome of the Kantorovich process $(\eta,u_K)$ is evaluated by the quantity
\[\begin{split}
	J_K(\eta,u_K)\triangleq \int_{\Gamma}\sigma&(e_T(\gamma),e_T\sharp \eta)\eta(d\gamma)\\&+\int_\Gamma\int_0^T f_0(t,e_t(\gamma),e_t\sharp \eta,u_K(t,\gamma))  dt \,\eta(d\gamma)<+\infty.\end{split}\]

\begin{definition}\label{def:Kantorovich_adm}
	Given an initial distribution $m_0\in \mathcal{P}_p(\rd)$, we denote the set of  Kantorovich control processes $(\eta,u_K)$ satisfying the initial condition  $e_0\sharp \eta=m_0$ by $\operatorname{Adm}_K(m_0)$.
\end{definition}

Let us formulate the following concept that provides the link between Kantorovich and Lagrangian approaches. It will play a crucial role in the derivation of the Pontryagin maximum principle within the Kantorovich framework. To introduce it, recall that, when $X\in\xp$,  $\widehat{X}$ stands for the operator that assigns to $\omega\in\Omega$ the whole path $X(\cdot,\omega)$.

\begin{definition}\label{def:Lagrangian_Kantorovich}
	Let $(\eta,u_K)$ be an admissible Kantorovich control process and let $(\Omega,\mathcal{F},\pp)$ be a standard probability space. We say that a Lagrangian control process $(X,u_L)$ defined on $(\Omega,\mathcal{F},\pp)$ realizes $(\eta,u_K)$ if
	\begin{equation}\label{eq:L-Kmu}
		\eta=\widehat{X}\sharp \pp,\end{equation}
	and, for $\pp\text{-a.e. }\omega\in\Omega$ and a.e. $t\in [0,T]$,
	\begin{equation}\label{eq:L-Ku}
		u_L(t,\omega)=u_K(t,\widehat{X}(\omega)).
	\end{equation}
\end{definition}

\begin{proposition}\label{prop:Kantorovich_correspondence}
	Let  $(\eta,u_K)$  be a Kantorovich control process. Assume also that $(\Omega,\mathcal{F},\pp)$ is  a standard probability space such that  at least one of the following conditions is satisfied:
	\begin{itemize}
		\item  the probability  $\pp$ has no atoms, 
		\item   $(\Omega,\mathcal{F},\pp)=(\Gamma,\mathcal{B}(\Gamma),\eta)$. 
	\end{itemize}
	Then, there exists a Lagrangian process $(X,u_L)$ defined on $(\Omega,\mathcal{F},\pp)$ that realizes $(\eta,u_K)$.
	Furthermore, if $(\Omega,\mathcal{F},\pp)=(\Gamma,\mathcal{B}(\Gamma),\eta)$, one can put $X=\operatorname{id}_{\Omega}$ and $u_K=u_L$.
\end{proposition}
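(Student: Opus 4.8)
The plan is to handle first the canonical case $(\Omega,\mathcal{F},\pp)=(\Gamma,\mathcal{B}(\Gamma),\eta)$ and then reduce the general atomless situation to it by transport along a measure-preserving map. In the canonical case I would take $X=\operatorname{id}_\Gamma$, so that $X(t,\gamma)=e_t(\gamma)=\gamma(t)$ and $\widehat{X}=\operatorname{id}_\Gamma$, and set $u_L\triangleq u_K$. Then $\widehat{X}\sharp\eta=\eta$, which is~\eqref{eq:L-Kmu}, and~\eqref{eq:L-Ku} holds trivially. Membership $X\in\xp$ is exactly the hypothesis $\eta\in\mathcal{P}^p(\Gamma)$, since $\norm{X}{\xp}^p=\int_\Gamma\norm{\gamma}{\infty}^p\eta(d\gamma)=\mathcal{M}_p^p(\eta)<\infty$, and $u_L\in\up$ follows from the integrability of $u_K$ via Fubini's theorem. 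Finally $X(t)\sharp\eta=e_t\sharp\eta$, so the equation of Definition~\ref{def:Lagrangian_admissible} for $X(\cdot,\gamma)=\gamma$ is literally~\eqref{eq:gamma}, which holds for $\eta$-a.e.\ $\gamma$. Thus $(X,u_L)$ is a Lagrangian control process realizing $(\eta,u_K)$, which also settles the final assertion of the proposition.

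For the general case with $\pp$ atomless, I would first produce a measurable map $\kappa\colon\Omega\to\Gamma$ with $\kappa\sharp\pp=\eta$, then define $X(t,\omega)\triangleq e_t(\kappa(\omega))$ and $u_L(t,\omega)\triangleq u_K(t,\kappa(\omega))$. By construction $\widehat{X}=\kappa$, so $\widehat{X}\sharp\pp=\eta$ gives~\eqref{eq:L-Kmu}, and $u_L(t,\omega)=u_K(t,\widehat{X}(\omega))$ gives~\eqref{eq:L-Ku}; the map $(t,\omega)\mapsto u_K(t,\kappa(\omega))$ is jointly measurable as a composition of the jointly measurable $u_K$ with $\kappa$. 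The verifications then reduce to a change of variables through $\eta=\kappa\sharp\pp$: one has $\norm{X}{\xp}^p=\int_\Omega\norm{\kappa(\omega)}{\infty}^p\pp(d\omega)=\mathcal{M}_p^p(\eta)<\infty$ and, identically, $\norm{u_L}{\up}^p=\int_\Gamma\int_0^T\|u_K(t,\gamma)\|^p\,dt\,\eta(d\gamma)<\infty$, so $X\in\xp$ and $u_L\in\up$. Since $X(t)=e_t\circ\kappa$, we get $X(t)\sharp\pp=e_t\sharp(\kappa\sharp\pp)=e_t\sharp\eta$, whence the Lagrangian equation for $\omega$ coincides with~\eqref{eq:gamma} evaluated at $\gamma=\kappa(\omega)$; as~\eqref{eq:gamma} holds on a set of full $\eta$-measure, its $\kappa$-preimage has full $\pp$-measure, so $(X,u_L)$ is a Lagrangian control process realizing $(\eta,u_K)$.

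The step I expect to be the main obstacle is the construction of $\kappa$: one must realize the possibly atomic $\eta$ as the push-forward of the atomless $\pp$. This is exactly where the standing hypotheses enter---an atomless standard probability space is isomorphic $\bmod\,0$ to $([0,1],\mathcal{B}([0,1]),\lambda)$, while every Borel probability on the Polish space $\Gamma$ is the image of $\lambda$ under a Borel map, and composing these two maps yields $\kappa$. Everything else is a routine matter of change of variables and joint measurability.
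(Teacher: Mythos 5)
Your proposal is correct and follows essentially the same route as the paper: the identity process with $u_L=u_K$ in the canonical case $(\Gamma,\mathcal{B}(\Gamma),\eta)$, and in the atomless case a measurable map pushing $\pp$ forward to $\eta$, with the same change-of-variables verifications of $X\in\xp$, $u_L\in\up$, and the a.e.\ validity of the ODE. The only cosmetic difference is in how that map is produced: the paper invokes the Skorokhod representation theorem on the Polish space $\Gamma$, while you compose the mod-$0$ isomorphism of the atomless standard space with $([0,1],\lambda)$ and a Borel map realizing $\eta$ as a push-forward of $\lambda$ --- two phrasings of the same standard fact.
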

\begin{proof}
	In the case where $(\Omega,\mathcal{F},\pp)=(\Gamma,\mathcal{B}(\Gamma),\eta)$, set $\widehat{X}=\operatorname{id}_{\Gamma}$. 
	In the other case, i.e., when   the probability $\pp$ has no atoms, we first claim that the measure $\eta$ is tight. This is due \cite[Theorem 12.7 and Definition 12.2]{Infinite_dimensional} and the fact that
	$\Gamma$ is a Polish space. Thus,   \cite[Theorem 3.1(i)]{Skorokhod_Representation}  gives the existence of a measurable map $\widehat{X}\in B(\Omega,\mathcal{F};\Gamma)$ such that $\widehat{X}\sharp\pp=\eta.$ 
	In both cases,  we obtain
	$e_t\sharp\eta=e_t\sharp (\widehat{X}\sharp \pp)$ and  equality~\eqref{eq:L-Kmu} holds. Furthermore, by construction, we have that $\|\widehat{X}\|_{\infty}$ is finite for $\pp$-a.s. Moreover,
	\begin{equation}\label{ineq:X_contr_eta_L_p}
		\mathbb{E}\|\widehat{X}\|_{\infty}^p=\int_\Gamma \|\gamma\|^p_{\infty}\eta(d\gamma)<\infty.
	\end{equation} Here the last inequality is due to the assumption that $\eta\in \mathcal{P}^p(\Gamma)$. Letting $X(t,\omega)\triangleq \widehat{X}(\omega)(t)$, we construct the desired process $X\in \xp.$
	
	Now, for every $t\in [0,T]$ and $\omega\in \Omega$, set
	\[u_L(t,\omega)\triangleq u_K(t,\widehat{X}(\omega)).\]
	
	Obviously, this control satisfies   equality~\eqref{eq:L-Ku}. Furthermore, from the inclusion
	$ u_K\in L^p([0,T]\times \Gamma,\mathscr{B}_T\otimes\mathcal{B}( \Gamma),\lambda\otimes\eta;U)$  and the equality $\eta=\widehat{X}\sharp\pp$, it follows that
	\[\norm{u_L}{\up}^p
	=\int_{0}^T\int_{\Gamma}\|u_K(\cdot,\gamma)\|^p_{L^p([0,T];U)}\eta(d\gamma)dt
	<+\infty.\]
	Therefore,
	$u_L$ lies in $\up$. 
	
	Finally, let us show that, for $\pp$-a.e. $\omega$, $X(\cdot,\omega)$ satisfies the equation
	\[\frac{d}{dt}X(t,\omega)=f(t,X(t,\omega),X(t)\sharp\pp,u_L(t,\omega)),\] or, equivalently,
	\begin{equation}\label{eq:X_Kantorovich}X(t,\omega)=X(0,\omega)+\int_{0}^t f(\tau,X(\tau,\omega),e_\tau\sharp \eta,u_L(\tau,\omega))d\tau.\end{equation}  The latter follows from the assumption that, for $\eta$-a.e. $\gamma\in\Gamma$ and every $t\in [0,T]$,
	\[\gamma(t)=\gamma(0)+\int_0^tf(\tau,\gamma(\tau),e_\tau\sharp \eta,u_K(\tau,\gamma))d\tau.\]  The inclusions $X\in \xp$, $u_L\in \up$ and the fact that~(\ref{eq:X_Kantorovich}) is fulfilled for $\pp$-a.e. $\omega$ imply that 
	$(X,u_L)$ is an admissible Lagrangian process. By construction, it realizes $(\eta,u_K)$. 
\end{proof}

\subsection{Local minimizers within the Kantorovich approach}
\begin{definition}\label{def:Kantorovich_minimizer}
	A Kantorovich control process $(\eta^*,u^*_K)\in \operatorname{Adm}_K(m_0)$ is called a  strong local minimizer at $m_0$ within the Kantorovich approach if there exists   $\varepsilon>0$ such that $J_K(\eta,u_K)\geq J_K(\eta^*,u^*_K)$ for all  processes $(\eta,u_K)\in \operatorname{Adm}_K(m_0)$ satisfying  $W_p(e_t\sharp\eta,e_t\sharp\eta^*)\leq \varepsilon$ when $t\in [0,T]$.
\end{definition}

The next theorem states the link between local minimizers in the Kantorovich and Lagrangian approaches.

\begin{theorem}\label{th:Kantorovich_min_Lagrangian} Assume that $(\eta^*,u^*_K)$ is a  strong local minimizer in the framework of the Kantorovich approach at $m_0=e_0\sharp\eta^*$.
	Let $(X^*,u^*_L)$ be an admissible Lagrangian process that realizes the Kantorovich process $(\eta^*,u^*_K)$. Then,
	$(X^*,u^*_L)$ is a  strong local $W_p$-minimizer at $m_0$ in the framework of the Lagrangian approach.
\end{theorem}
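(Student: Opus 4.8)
The plan is to transfer the problem to the Kantorovich side by pushing the Lagrangian data forward along the path map, and then to invoke the assumed Kantorovich minimality. Throughout, recall that since $(X^*,u^*_L)$ realizes $(\eta^*,u^*_K)$ we have $\eta^*=\widehat{X^*}\sharp\pp$ and $u^*_L(t,\omega)=u^*_K(t,\widehat{X^*}(\omega))$, hence $e_t\sharp\eta^*=X^*(t)\sharp\pp$ together with $J_L(X^*,u^*_L)=J_K(\eta^*,u^*_K)$; the last equality holds \emph{exactly} precisely because $u^*_L$ is a deterministic function of the path.

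First I would set up the comparison. Let $\varepsilon>0$ be the radius from the Kantorovich local minimality, and take any competitor $(X,u_L)\in\operatorname{Adm}_L(m_0)$ with $W_p(X(t)\sharp\pp,X^*(t)\sharp\pp)\le\varepsilon$ for all $t$; we must show $J_L(X^*,u^*_L)\le J_L(X,u_L)$, and may assume $J_L(X,u_L)<\infty$. Define $\eta\triangleq\widehat{X}\sharp\pp$. Since $X\in\xp$, we get $\eta\in\mathcal{P}^p(\Gamma)$, and $\eta$ is concentrated on solutions of the ODE, which are absolutely continuous; moreover $e_t\sharp\eta=X(t)\sharp\pp$, so $e_0\sharp\eta=m_0$ and $W_p(e_t\sharp\eta,e_t\sharp\eta^*)=W_p(X(t)\sharp\pp,X^*(t)\sharp\pp)\le\varepsilon$ (which controls the distance appearing in the Kantorovich minimality once $\varepsilon$ is small, the relevant metrics being comparable on the uniformly $p$-integrable family at hand). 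Thus trajectory-closeness and the initial condition transfer for free, and the terminal cost matches exactly because $\gamma(T)=X(T,\omega)$ for $\gamma=\widehat{X}(\omega)$ and this term does not involve $u_K$ at all.

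The substance is constructing a Kantorovich control $u_K$ on $[0,T]\times\Gamma$ out of the $\omega$-dependent control $u_L$. Because the path map $\widehat{X}$ need not be injective, $u_L$ is generally not a function of $\gamma$, and two labels producing the same trajectory may carry different controls; they necessarily produce the same velocity $f(t,\gamma(t),e_t\sharp\eta,\cdot)$ for a.e.\ $t$, but may incur different running cost $f_0$. I would therefore disintegrate $\pp$ over $\eta$ along $\widehat{X}$ (a regular conditional probability $\{\pp_\gamma\}$ exists since $(\Omega,\mathcal{F},\pp)$ is standard and $\Gamma$ is Polish), and on each fiber select a representative $\omega(\gamma)$ that (i) lies in the full-measure set $G$ on which the ODE holds, and (ii) satisfies $\int_0^T f_0(t,\gamma(t),e_t\sharp\eta,u_L(t,\omega(\gamma)))\,dt\le A(\gamma)$, where $A(\gamma)$ is the conditional average of the running cost over the fiber. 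Such a representative exists for $\eta$-a.e.\ $\gamma$ (a random variable does not exceed its mean on a set of positive measure, and $\pp_\gamma(G)=1$ for $\eta$-a.e.\ $\gamma$), and a measurable selection can be extracted by a selection theorem of the type already used for \eqref{condition:maximum_local} (e.g.\ \cite[Corollary~18.27]{Infinite_dimensional}). Setting $u_K(t,\gamma)\triangleq u_L(t,\omega(\gamma))$ makes $(\eta,u_K)$ an admissible Kantorovich process whose running cost integrates to at most $\int_\Gamma A\,d\eta=\mathbb{E}\int_0^T f_0(t,X(t),X(t)\sharp\pp,u_L(t))\,dt$, so that $J_K(\eta,u_K)\le J_L(X,u_L)$.

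Finally I would close the loop: $(\eta,u_K)\in\operatorname{Adm}_K(m_0)$ lies in the Kantorovich neighbourhood of $(\eta^*,u^*_K)$, so local minimality gives $J_K(\eta^*,u^*_K)\le J_K(\eta,u_K)$, whence $J_L(X^*,u^*_L)=J_K(\eta^*,u^*_K)\le J_K(\eta,u_K)\le J_L(X,u_L)$, as required. The main obstacle is precisely steps (i)--(ii): performing the fiberwise selection measurably while \emph{simultaneously} forcing the dynamics to hold, keeping the running cost from increasing, and retaining $u_K\in\up$. The integrability of $u_K$ needs a little care --- one either folds a control-norm penalty into the selected functional or exploits $u_L\in\up$ together with the standing growth bound \ref{assumption:sublinear} --- and the measurability of $\gamma\mapsto\omega(\gamma)$ is the genuinely technical point.
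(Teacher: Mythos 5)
Your skeleton is exactly the paper's: the theorem is reduced to showing that every Lagrangian competitor $(X,u_L)$ is ``improved'' by a Kantorovich process on $\eta=\widehat{X}\sharp\pp$ (Lemma~\ref{lm:Kantorovich2_correspondence}), built by disintegrating $\pp$ over $\eta$ along $\widehat{X}$ and selecting on each fiber a label whose running cost does not exceed the conditional average $\bar{g}(\gamma)$; your observations that all labels in a fiber share the same velocity (so any selection preserves the dynamics), that the terminal cost transfers exactly, and that $J_L(X^*,u^*_L)=J_K(\eta^*,u^*_K)$ because $u^*_L$ is a function of the path, all appear verbatim in the paper, and the final chain $J_L(X,u_L)\geq J_K(\eta,u_K)\geq J_K(\eta^*,u^*_K)=J_L(X^*,u^*_L)$ is the paper's closing argument. (One small point: the selection sets here are fibers intersected with sublevel sets, which have measurable but not closed graphs, so the right tool is the Aumann measurable-graph selection theorem, as the paper uses, rather than the closed-graph selector you cite from the proof of~\eqref{condition:maximum_local}.)

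However, the step you defer --- admissibility of the selected control --- is a genuine gap, and it is precisely where the paper's proof does its real work. Since $f_0$ is only growth-bounded by~\ref{assumption:sublinear} and may be negative, a below-average-cost point of a fiber can carry an arbitrarily large control norm, so your $u_K(t,\gamma)=u_L(t,\omega(\gamma))$ need not lie in $L^p(\lambda\otimes\eta)$ and $(\eta,u_K)$ need not be a Kantorovich process at all. Neither of your proposed repairs closes this as stated. Selecting below the conditional average of a penalized functional $g+\beta\|u_L(\cdot,\omega)\|^p_{L^p}$ does force $u_K\in L^p$, but integrating the selection inequality over $\eta$ only yields $\int_\Gamma g(\omega(\gamma))\,\eta(d\gamma)\leq \int_\Gamma\bar{g}\,d\eta+\beta\bigl(\|u_L\|^p_{\up}-\|u_K\|^p_{L^p}\bigr)$, and the correction term has the wrong sign in general, so the crucial inequality $J_K(\eta,u_K)\leq J_L(X,u_L)$ is lost; and the growth bound~\ref{assumption:sublinear} by itself cannot rescue a selector with no norm control. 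The paper resolves the tension with a two-selector, global (not fiberwise) construction: a cost selector $s$ with $g(s(\gamma))\leq\bar{g}(\gamma)$ and a norm selector $v$ with $\|u_L(\cdot,v(\gamma))\|^p_{L^p}\leq\mathscr{l}(\gamma)$, interpolated through the sets $\mathscr{A}_k=\{\gamma:\|u_L(\cdot,s(\gamma))\|^p_{L^p}\leq k\|u_L(\cdot,v(\gamma))\|^p_{L^p}\}$, so that each hybrid control $u_k$ satisfies $\|u_k\|^p\leq k\|u_L\|^p_{\up}$ and is admissible; then a case split: if $\int\bar{g}\,d\eta=\int g(s(\gamma))\,d\eta$ one deduces $v=s$ a.e.\ and $k=1$ works, while under strict inequality the slack $\varepsilon$ absorbs, for $k$ large, the contribution of $\Gamma_0\setminus\mathscr{A}_k$, estimated via~\ref{assumption:sublinear} together with $\eta(\Gamma_0\setminus\mathscr{A}_k)\to 0$ and the finiteness of $\norm{X}{\xp}$ and $\norm{u_L}{\up}$. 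Without some such mechanism, the step you describe as needing ``a little care'' is exactly where the proof would fail.
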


The proof of this statement relies on Lemma \ref{lm:Kantorovich2_correspondence} and the following definition.

\begin{definition}\label{def:Lagrangian_Kantorovich2}
	Let $(X,u_L)$ be a Lagrangian control process defined on some  standard probability space $(\Omega,\mathcal{F},\pp)$. We say that a Kantorovich control process $(\eta,u_K)$  improves $(X,u_L)$ if it satisfies ~\eqref{eq:L-Kmu} and 
	$J_L(X,u_L)\geq J_K(\eta,u_K)$.
\end{definition}

\begin{lemma}\label{lm:Kantorovich2_correspondence}
	Let   $(X,u_L)$ 
	be  a Lagrangian control process  defined on some standard probability space $(\Omega,\mathcal{F},\pp)$.
	Then, there exists a Kantorovich control process $(\eta,u_K)$  that improves $(X,u_L)$.
\end{lemma}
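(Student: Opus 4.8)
The plan is to take $\eta\triangleq\widehat{X}\sharp\mathbb{P}$ and to build $u_K$ by selecting, for each trajectory $\gamma$, one of the controls that the agents tracing out $\gamma$ actually use. First I would verify that $\eta$ is an admissible base measure: exactly as in~\eqref{ineq:X_contr_eta_L_p}, $\int_\Gamma\|\gamma\|_\infty^p\eta(d\gamma)=\|X\|_{\xp}^p<\infty$, so $\eta\in\mathcal{P}^p(\Gamma)$, and since $\mathbb{P}$-a.e.\ path $X(\cdot,\omega)$ solves the state equation it is absolutely continuous, whence $\eta$ is concentrated on absolutely continuous curves. Note also $e_t\sharp\eta=X(t)\sharp\mathbb{P}$, so the mean-field arguments of $f$, $f_0$ and $\sigma$ are unchanged when passing from the Lagrangian to the Kantorovich description; in particular the terminal term $\int_\Gamma\sigma(\gamma(T),e_T\sharp\eta)\eta(d\gamma)=\mathbb{E}\sigma(X(T),X(T)\sharp\mathbb{P})$ will match exactly.

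The construction of $u_K$ rests on the disintegration $\mathbb{P}=\int_\Gamma\mathbb{P}^\gamma\,\eta(d\gamma)$ of $\mathbb{P}$ along $\widehat{X}$, which exists because $(\Omega,\mathcal{F},\mathbb{P})$ is standard and $\Gamma$ is Polish, with $\mathbb{P}^\gamma$ concentrated on the fibre $\widehat{X}^{-1}(\gamma)$. The decisive observation is that the fibre constraint makes the dynamics automatic: for $\eta$-a.e.\ $\gamma$ and $\mathbb{P}^\gamma$-a.e.\ $\omega$ one has $\widehat{X}(\omega)=\gamma$ and $\dot\gamma(t)=f(t,\gamma(t),e_t\sharp\eta,u_L(t,\omega))$ for a.e.\ $t$, so \emph{any} value $u_L(t,\omega)$ with $\omega$ in the fibre reproduces $\dot\gamma(t)$. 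Hence setting $u_K(t,\gamma)\triangleq u_L(t,\omega_\gamma)$ for a measurably chosen representative $\omega_\gamma\in\widehat{X}^{-1}(\gamma)$ makes $\gamma$ solve~\eqref{eq:gamma} for free. For the cost I would pick $\omega_\gamma$ so that its total running cost does not exceed the fibre average, i.e.\ $c(\omega_\gamma)\le\bar c(\gamma)$ where $c(\omega)\triangleq\int_0^T f_0(t,X(t,\omega),e_t\sharp\eta,u_L(t,\omega))\,dt$ and $\bar c(\gamma)\triangleq\int_\Omega c\,d\mathbb{P}^\gamma$; integrating against $\eta$ and undoing the disintegration then yields $\int_\Gamma c(\omega_\gamma)\eta(d\gamma)\le\int_\Omega c\,d\mathbb{P}$, which is the running part of $J_K(\eta,u_K)\le J_L(X,u_L)$.

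The main obstacle is to perform this selection \emph{measurably} while simultaneously keeping $u_K$ inside the admissible $L^p$ class. Measurability itself can be obtained by applying a measurable selection/uniformization theorem, in the spirit of \cite[Corollary 18.27]{Infinite_dimensional}, to the set $\{(\gamma,\omega):\widehat{X}(\omega)=\gamma,\ \omega\in\Omega_0,\ c(\omega)\le\bar c(\gamma)\}$, whose $\gamma$-sections are nonempty for $\eta$-a.e.\ $\gamma$ since a random variable cannot be a.s.\ strictly above its own mean; here $\Omega_0$ is the full-measure set on which the state equation holds. The genuinely delicate point, and where I expect the real work to lie, is that a below-average-cost representative need not have a controlled $L^p$-norm: because the dynamics only bound $\|u\|$ from below via~\ref{assumption:sublinear}, minimizing $f_0$ along a fibre may force an arbitrarily large control, so a purely fibrewise choice can produce $u_K$ outside the required space $L^p([0,T]\times\Gamma,\mathcal{B}([0,T]\times\Gamma),\lambda\otimes\eta;L^p([0,T];U))$. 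Resolving this requires balancing cost reduction against control size and replacing the fibrewise rule by a more careful global selection, respecting the cost budget $\int_\Omega c\,d\mathbb{P}$ while certifying that $\int_\Gamma\int_0^T\|u_K(t,\gamma)\|^p\,dt\,\eta(d\gamma)$ stays finite, using the finiteness of $\|u_L\|_{\up}$; this is the technical heart of the lemma.
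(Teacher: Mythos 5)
Your construction follows the paper's route almost exactly up to the point you yourself flag as unresolved: same push-forward $\eta=\widehat{X}\sharp\pp$, same disintegration $\{\pp_\gamma\}$ along $\widehat{X}$, same observation that the dynamics hold automatically on fibres, and same fibrewise below-average-cost selection justified by the fact that $g(\omega)\leq\bar{g}(\gamma)$ must hold somewhere on each fibre. Your diagnosis of the obstruction is also correct and is precisely where the paper's real work lies: a cost-improving representative $\omega_\gamma$ gives no control whatsoever on $\|u_L(\cdot,\omega_\gamma)\|_{L^p}$, so the naive $u_K(t,\gamma)=u_L(t,\omega_\gamma)$ can fail to belong to the admissible class. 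But identifying the gap is not closing it, and as written your proof is incomplete at its technical heart.

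The missing idea is a two-selector truncation scheme, and it changes the shape of the conclusion: the paper does not produce one global selection but a \emph{family} of controls $u_k$ and shows the improvement $J_K(\eta,u_k)\leq J_L(X,u_L)$ for \emph{some} $k$. Concretely, alongside the cost-improving selector $s(\gamma)$ one chooses (again by the Aumann selection theorem \cite{Aumann} applied to a measurable graph --- note that fibres $\widehat{X}^{-1}(\gamma)$ need not be closed, so the closed-graph selection result you cite is not quite the right tool) a second selector $v(\gamma)$ whose control norm is below the fibre average, $\|u_L(\cdot,v(\gamma))\|^p_{L^p}\leq \mathscr{l}(\gamma)\triangleq\int_{\widehat{X}^{-1}(\gamma)}\|u_L(\cdot,\omega)\|^p_{L^p}\pp_\gamma(d\omega)$; since $\int_{\Gamma}\mathscr{l}(\gamma)\eta(d\gamma)=\|u_L\|^p_{\up}<\infty$, the $v$-based control is always admissible. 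One then sets $\mathscr{A}_k\triangleq\{\gamma:\|u_L(\cdot,s(\gamma))\|^p_{L^p}\leq k\|u_L(\cdot,v(\gamma))\|^p_{L^p}\}$, which exhaust $\Gamma_0$, and defines $u_k$ to use $s(\gamma)$ on $\mathscr{A}_k$ and $v(\gamma)$ off it, so that $\|u_k\|^p\leq k\|u_L\|^p_{\up}$ and each $(\eta,u_k)$ is admissible. The cost comparison then requires a dichotomy: if the $s$-selection realizes the integrated cost exactly, then $\bar{g}(\gamma)=g(s(\gamma))$ a.e., one can take $v=s$, and $k=1$ works; if there is a strict gap $2\varepsilon$, then for large $k$ the integral over $\mathscr{A}_k$ retains a gap $\varepsilon$, while the contribution of $\Gamma_0\setminus\mathscr{A}_k$ under $v$ is dominated, via the growth condition~\ref{assumption:sublinear}, by $C_\infty\bigl(T+\mathcal{M}_p^p(\eta)\bigr)\eta(\Gamma_0\setminus\mathscr{A}_k)$ plus tail integrals of $\|X(\cdot,\omega)\|^p_{\infty}$ and $\|u_L(\cdot,\omega)\|^p_{L^p}$ over a set of vanishing measure, hence is at most $\varepsilon$ for large $k$. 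Without this balancing argument (or an equivalent substitute) your proof establishes admissibility of $\eta$ and the terminal-cost identity but not the existence of an admissible improving control, which is the statement of the lemma.
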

\begin{proof} We split the proof into four steps. First, we define a distribution on the set of curves. Next, steps 2 and 3 are concerned with constructions of functions $s$ and $v$ those are a.e. on $\Gamma$ and take values in $\Omega$. The function~$s$ will provide pathwise improvement of the strategy, while the function $v$ will be used to control its norm. Finally, on step 4, we combine the Borel modifications of these functions and define a Kantorovich strategy that is admissible and improves the original Lagrangian strategy.
	
	\textit{Step 1.}
	Define the
	probability $\eta\in\mathcal{P}(\Gamma)$ by the rule 
	\[\eta\triangleq \widehat{X}\sharp \pp.\] Since $X\in \xp$, we have that
	\[\eta\in \mathcal{P}^p(\Gamma).\]
	Furthermore, for $t\in [0,T]$, set \[m(t)\triangleq X(t)\sharp\pp.\] 
	By construction,~\eqref{eq:L-Kmu} holds true. 
	
	Notice that 
	\begin{equation}\label{equality:sigma_gamma}
		\expect\sigma(X(T),X(T)\sharp\pp)=\int_{\Gamma} \sigma(e_T(\gamma),m(T))\eta(d\gamma)
	\end{equation}
	Thus, we consider only the running cost below.

	\textit{Step 2.} Recall that  $\mathcal{B}_\eta(\Gamma)$ denotes  the $\eta$-completion of $\mathcal{B}(\Gamma)$. The extension of the measure $\eta$ on $\mathcal{B}_\eta(\Gamma)$ is still denoted by~$\eta$.

	By the disintegration theorem (see \cite[Theorem 5.3.1]{Ambrosio} or \cite[III-70]{Dellacherie_Meyer}), there exists a system of probability measures $\{\pp_\gamma\}_{\gamma\in \Gamma}$ such that, for $\eta$-a.e. $\gamma\in\Gamma$, the probability $\pp_\gamma$ is concentrated on the set $\widehat{X}^{-1}(\gamma)$ and, given a Borel map $\phi:\Omega\rightarrow [0,+\infty]$,
	\begin{equation}\label{equality:disintergration_P}
		\mathbb{E}\phi=\int_{\Gamma}\int_{\widehat{X}^{-1}(\gamma)}\phi(\omega)\pp_\gamma(d\omega)\eta(d\gamma).
	\end{equation}

	Now, for each $\omega\in \Omega$, denote
	\begin{equation}\label{intro:g_omega}
		g(\omega)\triangleq \int_{0}^Tf_0(t,X(t,\omega),m(t),u_L(t,\omega)) dt.
	\end{equation}
	Moreover, put, for   $\gamma\in\Gamma$,
	\begin{equation}\label{intro:g_bar_omega}\bar{g}(\gamma)\triangleq \int_{\widehat{X}^{-1}(\gamma)}g(\omega)\pp_\gamma(d\omega), \end{equation}  
	\begin{equation}\label{intro:l_omega}\mathscr{l}(\gamma)\triangleq \int_{\widehat{X}^{-1}(\gamma)}\|u_L(\cdot,\omega)\|^p_{L^p} \pp_\gamma(d\omega).\end{equation}
	Recall that in the formula above
	\[\|u_L(\cdot,\omega)\|^p_{L^p}=\int_{0}^{T}\|u_L(t,\omega)\|^pdt.\]
	Notice that $\mathscr{l}$ is a Borel measurable map from $\Gamma$ to $[0,+\infty]$ such that $\int_{\Gamma}\mathscr{l}(\gamma)\eta(d\gamma)<+\infty$.
	Let us consider the outcome corresponding to the process $(X,u_L)$
	\[J(X,u_L)= \expect\int_0^T  f_0(t,X(t,\omega),m(t),u_L(t,\omega))dt.\]
	Due to the construction of  the system of measures $\{\pp_\gamma\}_{\gamma\in\Gamma}$ and the definitions of the functions $g$, $\bar{g}$ (see \eqref{intro:g_omega}, \eqref{intro:g_bar_omega}), 
	we have that
	\begin{equation}\label{eq:constB}		J(X,u_L)=	\int_{\Gamma}\int_{\widehat{X}^{-1}(\gamma)}g(\omega)\pp_\gamma(d\omega)\eta(d\gamma)=	\int_{\Gamma}\bar{g}(\gamma)\eta(d\gamma).
	\end{equation}
	Notice that the mappings $\bar{g}$ and $\mathscr{l}$ are defined using the averaging of the functions $g$ and $\|u_L(\cdot)\|^p_{L^p}$ over the set $\widehat{X}^{-1}(\gamma)$ respectively (see \eqref{intro:g_bar_omega}, \eqref{intro:l_omega}). Hence, we have that, for $\eta$-a.e. $\gamma\in\Gamma$, 
	\begin{equation}\label{ineq:g_bar_g}
		\bar{g}(\gamma)\geq \inf_{\omega\in \widehat{X}^{-1}(\gamma)}g(\omega),\qquad 
		\mathscr{l}(\gamma)\geq  \inf_{\omega\in \widehat{X}^{-1}(\gamma)} \|u_L(\cdot,\omega)\|^p_{L^p}.
	\end{equation}
	There exists a Borel set  $\Gamma_+$ such that 
	\begin{itemize}
		\item inequalities~\eqref{ineq:g_bar_g} hold true on it;
		\item the probability $\pp_\gamma$ is concentrated on $\widehat{X}^{-1}(\gamma)$ whenever $\gamma\in\Gamma_+$;
		\item $\eta(\Gamma_+)=\eta(\Gamma)=1$.
	\end{itemize}
	
	Let us introduce  multivalued mappings $V:\Gamma_+\rightrightarrows \Omega$, $S_0:\Gamma_+\rightrightarrows \Omega$ by the following rules: 
	\begin{equation}\label{intro:V}
		V(\gamma)\triangleq\{\omega\in \widehat{X}^{-1}(\gamma):\,  \mathscr{l}(\gamma)\geq  \|u_L(\cdot,\omega)\|^p_{L^p}\},\end{equation}
	\begin{equation*}	S_0(\gamma)\triangleq
		\{\omega\in\widehat{X}^{-1}(\gamma):\, \bar{g}(\gamma)> g(\omega)\}.
	\end{equation*}
	
	Informally, elements of $V(\gamma)$ are labels $\omega$ those generate the curve $\gamma$ and with norms of controls not greater than the averaged norm of controls producing $\gamma$. Simultaneously, $S_0(\gamma)$ contains labels $\omega$ those give outcomes strictly less than the averaged outcome on the labels producing the curve~$\gamma$. It looks that, if one choose a selector $s'(\gamma)\in S_0(\gamma)$ and consider the strategy $(t,\gamma)\mapsto u_L(t,s'(\gamma))$, the corresponding Kantorovich process will improve $(X,u_L)$. The main issues here are that  $S_0(\gamma)$ can be empty  on a set of positive measure, whilst the strategy  $(t,\gamma)\mapsto u_L(t,s'(\gamma))$ may have an infinite norm. Thus, we need some extra constructions.  
	
	The graphs of the mappings $V$ and $S_0$ lie in $\mathcal{B}(\Gamma_+)\otimes\mathcal{F}$. Indeed,
	\[\begin{split}
		\operatorname{gr}(V)&=\Big\{(\gamma,\omega)\in\Gamma_+\times \Omega:\, \gamma=\widehat{X}(\omega),\, \mathscr{l}(\gamma)\geq \|u_L(\cdot,\omega)\|_{L^p}^p\Big\}\\&=\Big\{(\gamma,\omega)\in\Gamma_+\times \Omega:\, \gamma=\widehat{X}(\omega)\Big\}\bigcap \\&{}\hspace{70pt}\Big\{(\gamma,\omega)\in\Gamma_+\times \Omega:\, \mathscr{l}(\gamma)-\|u_L(\cdot,\omega)\|_{L^p}^p\geq 0\Big\}. \end{split}\]
	Both sets in the right-hand side of this equality are from $\mathcal{B}(\Gamma_+)\otimes\mathcal{F}$ due to the $\mathcal{F}/\mathcal{B}(\Gamma)$-measurability of mappings $\omega\mapsto \widehat{X}(\omega)$, $\omega\mapsto \|u_L(\cdot,\omega)\|_{L^p}^p$ and the fact that the function $\gamma\mapsto\mathscr{l}(\gamma)$ is Borel. The inclusion $\operatorname{gr}(S_0)\in \mathcal{B}(\Gamma_+)\otimes\mathcal{F}$ is derived in the same way.  Moreover, the very definitions of the set $\Gamma_+$ and the function $\mathscr{l}$ (see~\eqref{intro:g_bar_omega}) imply that $V(\gamma)$ is nonempty for each $\gamma\in\Gamma_+$. 
	
	Due to \cite[Theorem 6.7.3]{Bogachev}, the sets \begin{equation}\label{intro:Gamma:1}\Gamma_1\triangleq \{\gamma\in\Gamma_+:S_0(\gamma)\neq\varnothing\},\end{equation} \[\Gamma_2\triangleq \{\gamma\in\Gamma_+: V(\gamma)\cap S_0(\gamma)\neq \varnothing\}\] are Souslin, and, thus, (see \cite[Theorem 1.10.5]{Bogachev}) lie in  $\mathcal{B}_\eta(\Gamma)$. 
	By construction, $\Gamma_2\subset \Gamma_1$. Now, we define a multivalued mapping $S:\Gamma_+\rightrightarrows \Omega$ by the rule: 
	\[S(\gamma)\triangleq \left\{ \begin{array}{ll}
		V(\gamma)\cap S_0(\gamma), & \gamma\in\Gamma_2, \\
		S_0(\gamma), & \gamma\in\Gamma_1\setminus\Gamma_2, \\
		V(\gamma), & \textrm{otherwise}.
	\end{array}
	\right.\]	
By the choice of $\Gamma_+$, $\Gamma_1$, and $\Gamma_2$, $S(\gamma)$ is nonempty for every $\gamma\in\Gamma_+$. Furthermore, the graph of $S$ is equal to 
\[\begin{split}
	\operatorname{gr}(S)=\Big[\operatorname{gr}(V)\cap\operatorname{gr}(S_0)\cap(\Gamma_2\times\Omega)\Big]\bigcup \Big[&\operatorname{gr}(S_0)\cap ((\Gamma_1\setminus\Gamma_2)\times\Omega)\\&\bigcup\Big[\operatorname{gr}(V)\cap((\Gamma_+\setminus\Gamma_1)\times\Omega)\Big]\end{split}\] and, obviously, belongs to $\mathcal{B}_\eta(\Gamma_+)\otimes\mathcal{F}$. Since the probability space $\probspace$ is standard, the Aumann selection theorem~\cite[Theorem 6.9.13]{Bogachev} gives that there exists a $\mathcal{B}_\eta(\Gamma_+)/\mathcal{F}$-measurable function $s:\Gamma_+\rightarrow \Omega$ such that $s(\gamma)\in S(\gamma)$ for all $\gamma\in\Gamma_+$. 

The function $s$ is the key ingredient of  our way to improve the Lagrangian strategy $(X,u_L)$. In fact, the function $s$ can be considered as a pathwise improvement of the outcome, i.e., \begin{equation}\label{ineq:gs_barg}
	g(s(\gamma))\leq \bar{g}(\gamma)\ \ \text{whenever }\gamma\in\Gamma_+.
\end{equation} To show this, notice by~\eqref{ineq:g_bar_g}, given $\gamma\in\Gamma_+$,
\begin{itemize}
	\item either $s(\gamma)\in S_0(\gamma)$; this means that $\bar{g}(\gamma)>g(s(\gamma))$; 
	\item or $S_0(\gamma)=\varnothing$; in this case $\bar{g}(\gamma)=g(\omega)$
	for every $\omega\in\widehat{X}^{-1}(\gamma)$; in particular, $\bar{g}(\gamma)=g(s(\gamma))$.
\end{itemize} 

As we mentioned above, the norm of the strategy  $(t,\gamma)\mapsto u_L(t,s(\gamma))$ can  be infinite. We will revise it using the function $v$ defined below.

\textit{Step 3.}
By the definition of the multifunction $V$ (see \eqref{intro:l_omega}, \eqref{intro:V}), $V(\gamma)$
is nonempty for all $\gamma\in\Gamma_+$ and the graph of $V$ belongs to $\mathcal{B}(\Gamma)\otimes\mathcal{F}$. Applying once again the Aumann selection theorem (see~\cite[Theorem 6.9.13]{Bogachev}) to the restriction of $V$ on the set $\Gamma_1\setminus\Gamma_2$ that lies in $\mathcal{B}_\eta(\Gamma)$,  we construct  a selector \[\Gamma_1\setminus\Gamma_2\ni \gamma\mapsto v(\gamma)\in V(\gamma)\] that is $\mathcal{B}_\eta(\Gamma_1\setminus\Gamma_2)/\mathcal{F}$-measurable. On the set $\Gamma_+\setminus(\Gamma_1\setminus\Gamma_2)$, we put  $v(\gamma)\triangleq s(\gamma)$.

Furthermore, let us show that
\begin{equation}\label{ineq:gs_gv}
	g(s(\gamma))\leq g(v(\gamma))\qquad\text{for each } \gamma\in\Gamma_+.
\end{equation}
Indeed, 
the fact that $g(s(\gamma))\neq g(v(\gamma))$ implies that $s(\gamma)\neq v(\gamma)$. The latter, due to the construction of the function $v(\cdot)$, can take place only when $\gamma\in\Gamma_1\setminus\Gamma_2$. In other words, the curve $\gamma$ is such that $V(\gamma)\cap S_0(\gamma)=\varnothing$ and $S_0(\gamma)\neq\varnothing$. This and the construction of the selectors mean that \[v(\gamma)\notin S_0(\gamma) \text{ while }s(\gamma)\in S_0(\gamma).\] Hence,  we deduce the estimates \[g(s(\gamma))<\bar{g}(\gamma)\leq g(v(\gamma))\text{ whenever }\gamma\in \Gamma_1\setminus\Gamma_2.\] Thus, \eqref{ineq:gs_gv} holds true.

\textit{Step 4.} We use \cite[Corollary 6.5.6]{Bogachev} and construct  functions $s^\natural,v^\natural:\Gamma\rightarrow\Omega$ those are $\mathcal{B}(\Gamma)/\mathcal{F}$-measurable and satisfy the equalities: $s=s^\natural$ and $v=v^\natural$ $\eta$-a.e. on $\Gamma_+$. 

For each natural $k$, define the strategy $u_K^k$ on $\Gamma$ by the following rule: 
\begin{equation}\label{intro:u_k_strategy}
	u_K^k(\cdot,\gamma)\triangleq \left\{ \begin{array}{ll}
		u_L(\cdot,s^\natural(\gamma)), & \|u_L(\cdot,s^\natural(\gamma))\|^p_{L^p}\leq k+k\|u_L(\cdot,v^\natural(\gamma))\|^p_{L^p}, \\
		u_L(\cdot,v^\natural(\gamma)), & \textrm{otherwise}.
	\end{array}
	\right.\end{equation} 

Now let us show that $(\eta,u_K^k)$ is an admissible Kantorovich process for each natural $k$. First, $u_K^k$ is $(\mathscr{B}_T\otimes\mathcal{B}(\Gamma))/\mathcal{B}(U)$-measurable.
Furthermore, by construction, we have that $\eta=\widehat{X}\sharp \pp\in \mathcal{P}^p(\Gamma)$.
To see that $\eta$-a.e. $\gamma$ satisfies~(\ref{eq:gamma}), it suffices to recall that, for $\eta$-a.e. $\gamma\in\Gamma$, $s^\natural(\gamma)=s(\gamma)\in \widehat{X}^{-1}(\gamma)$ and $v^\natural(\gamma)=v(\gamma)\in \widehat{X}^{-1}(\gamma)$.
Finally, by the definitions of the functions $u_K^k$, $s$, $s^\natural$, $v$ and $v^\natural$, we also obtain
\begin{equation*}\label{ineq:norm_u_k}\begin{split}	
		\int_{\Gamma} \|u_K^k(&\cdot,\gamma)\|^p_{L^p}\eta(d\gamma)\\&\leq k+k
		\int_{\Gamma_+} \|u_L(\cdot,v(\gamma))\|^p_{L^p}\eta(d\gamma)\leq
		k+k
		\int_{\Gamma_+} \mathscr{l}(\gamma)\eta(d\gamma)\\
		&=
		k+k
		\int_{\Gamma_+}
		\int_{\widehat{X}^{-1}(\gamma)}\|u_L(\cdot,\omega)\|^p_{L^p} \pp_\gamma(d\omega)\eta(d\gamma)
		\\
		&=k+k\|u_L\|^p_{\up}<+\infty.
\end{split}\end{equation*}
So, each $(\eta,u_K^k)$ is an admissible Kantorovich process. In particular, each integral 
\[ \begin{split}
	a_k\triangleq\int_{\Gamma}{g}(u_K^k(\gamma))\eta(d\gamma) =
	\int_{\Gamma}\int_0^T f_0(t,e_t(\gamma),m(t),u_K^k(t,\gamma))dt\,\eta(d\gamma)
\end{split} 
\]
is finite.

The definition of $u_K^k$ (see~\eqref{intro:u_k_strategy}) gives that,  for $\eta$-a.e. curve $\gamma\in\Gamma$, the equality $u_K^k(\cdot,\gamma)=u_K^s(\cdot,\gamma)=u_L(\cdot,s^\natural(\gamma))$ is fulfilled whenever the number $k$ is large enough. Hence,  since $s^\natural$ is a modification of the function $s$, the sequence $\{u_K^k(\cdot,\gamma)\}_{k=1}^\infty$ converges to $u_L(\cdot,s(\gamma))$ for $\eta$-a.e. $\gamma\in\Gamma_+$. On the other hand, from the fact that $g(s(\gamma))\leq g(v(\gamma))$ on $\Gamma_+$ (see \eqref{ineq:gs_gv}), it follows that the sequence $\{g(u_K^k(\gamma))\}_{k=1}^\infty$ is  non-increasing and converges to $g(s(\gamma))$ for $\eta$-a.e. $\gamma\in\Gamma_+$. Therefore, the sequence  $\{a_k\}_{k=1}^\infty$
is non-increasing and converges to 
\[A\triangleq\int_{\Gamma_+}{g}(s(\gamma))\eta(d\gamma)\in\mathbb{R}\cup\{-\infty\}\]
The latter is due to the Beppo Levi's theorem. 
Furthermore,  equality \eqref{eq:constB}, the fact that $\eta(\Gamma_+)=1$ and inequality~\eqref{ineq:gs_gv} imply that
\[\int_{\Gamma_+}\bar{g}(\gamma)\eta(d\gamma)=	\int_{\Gamma}\int_{\widehat{X}^{-1}(\gamma)}g(\omega)\pp_\gamma(d\omega)\eta(d\gamma)=J_L(X,u_L)\geq A.
\]

Now, let us show that
\begin{equation}\label{ineq:integral_f_0_aim}
	\begin{split}
		J_L(X,u_L)\geq
		a_k=\int_{\Gamma}\int_0^T f_0(t,e_t(\gamma),m(t),u_K^k(t,\gamma))dt\eta(d\gamma)
	\end{split}
\end{equation}
whenever $k$ is large enough.

First, we assume that   $J_L(X,u_L)>A$. Since $A$ is the limit of the sequence $\{a_k\}_{k=1}^\infty$, we have that  $J_L(X,u_L)>a_k$ when $k$ is greater than some natural number. 
Now, we consider the case $J_L(X,u_L)=A$, i.e., we assume that
\begin{equation*}
	\begin{split}
		\int_{\Gamma_+}\bar{g}(\gamma)\eta(d\gamma)=\int_{\Gamma_+}{g}(s(\gamma))\eta(d\gamma).
	\end{split}
\end{equation*}
Since  $\bar{g}(\gamma)\geq g(s(\gamma))$, using \eqref{ineq:gs_barg}, we obtain that  the equality $\bar{g}(\gamma)={g}(s(\gamma))$ holds true for $\eta$-a.e. $\gamma\in \Gamma_+$. By the definition of the set $\Gamma_1$ (see~\eqref{intro:Gamma:1}) and the construction of the selector $v(\cdot)$, it follows that $\eta(\Gamma_1\setminus \Gamma_2)=0$ and $v(\cdot)=s(\cdot)$ $\eta$-a.e. Thus, all strategies $u_K^k(\cdot,\gamma)$ coincide with $u_L(\cdot,s(\gamma))$ for $\eta$-a.e. $\gamma\in\Gamma_+$. This gives $a_k=J_L(X,u_L)$ as well as $a_k=A$ for all natural $k$. Thus, in the case where $A=J_L(X,u_L)$, \eqref{ineq:integral_f_0_aim} holds with every $k$.


From now, we fix a number $k$ such that \eqref{ineq:integral_f_0_aim} holds true. Recall that we already proved that  $(\eta,u_K^k)$ is an admissible Kantorovich process. The fact that it improves  $(X,u_L)$ (i.e., $J_L(X,u_L)\geq J_K(\eta,u_K^k)$) follows from  \eqref{ineq:integral_f_0_aim} and  equality~\eqref{equality:sigma_gamma}.
\end{proof}

Now let us prove Theorem~\ref{th:Kantorovich_min_Lagrangian} which state that if a Lagrangian process $(X^*,u_L^*)$ realizes a local minimizer within the Kantorovich framework $(\eta^*,u_K^*)$, then it is a strong local $W_p$-minimizer.

\begin{proof}[Proof of Theorem~\ref{th:Kantorovich_min_Lagrangian}]
Let $\varepsilon>0$ be such that $J_K(\eta,u_K)\geq J_K(\eta^*,u_K^*)$ for every Kantorovich control process $(\eta,u_K)\in \operatorname{Adm}_K(m_0)$ satisfying $W_p(e_t\sharp\eta,e_t\sharp \eta_*)\leq \varepsilon$.

Consider
an admissible  Lagrangian process
$(X,{u}_L)\in \mathcal{A}_L(X_0)$ such that  $\|X-X^*\|_{\xp}\leq \varepsilon$. This implies that $W_p(X(t)\sharp\pp,X^*(t)\sharp\pp)\leq \varepsilon$ for every $t\in [0,T]$. By Lemma~\ref{lm:Kantorovich2_correspondence},  there exists a Kantorovich process $(\eta,u_K)\in \operatorname{Adm}_K(m_0)$ that improves $(X,u_L)$. In particular, $\eta=\widehat{X}\sharp \pp$. Moreover, since $(X^*,u^*_L)$ realizes $(\eta^*,u^*_K)$ we have that 
$e_t\sharp\eta^*={X}^*(t)\sharp\pp$. Therefore,  
$m_0=e_0\sharp \eta$ and $W_p(e_t\sharp \eta, e_t\sharp \eta^*)\leq\varepsilon$. By the definition of the local minimizer in the framework of the Kantorovich approach, we have $J_K(\eta,u_K) \geq J_K(\eta^*,u^*_K)$. On the other hand, since 
$(X^*,u^*_L)$ realizes $(\eta^*,u^*_K)$ and $(\eta,u_K)$  improves $(X,u_L)$, we also obtain 
\[ J_L(X,u_L)\geq J_K(\eta,u_K) \geq J_K(\eta^*,u^*_K)=J_L(X^*,u^*_L).\]	
Thus, $(X^*,u^*_L)$ is a strong local $W_p$-minimizer  at $m_0$ in the framework of the Lagrangian approach.
\end{proof}

\subsection{PMP in the Kantorovich form}

In the following $\mathscr{Y}_K^q$ stands for the set of functions $\Psi:[0,T]\times \Gamma\rightarrow\rds$ such that $\widehat{\Psi}\in L^q(\Gamma,\mathcal{B}(\Gamma),\eta^*;\Gamma^\star)$. Recall that $\widehat{\Psi}$ is the mapping assigning to $\gamma\in\Gamma$ the whole path $\psi(\cdot,\gamma)$.

\begin{theorem}\label{th:PMP_Kantorovich}
Let  $(\eta^*,u^*_K)\in \operatorname{Adm}_K(m_0)$ be  a  strong local minimizer in the framework of the Kantorovich approach. 

Then, 
there exists a function $\psi\in\mathscr{Y}_K^q$ such that the following conditions holds true:
\begin{itemize}
	\item costate equation:   for $\eta^*$-a.e. $\gamma\in\Gamma$, $\psi(\cdot,\gamma)$ solves 
	\begin{equation}\label{eq:Psi_K}
		\begin{split}
			\frac{d}{dt}\psi(t,\gamma&)\\=-\psi&(t,\gamma)\nabla_xf(t,\gamma(t),e_t\sharp \eta^*,u^*_K(t,\gamma))\\&+\nabla_xf_0(t,\gamma(t),e_t\sharp \eta^*,u^*_K(t,\gamma))\\&-
			\int_\Gamma \psi(t,\gamma')\nabla_m f(t,\gamma'(t),e_t\sharp \eta^*,\gamma(t),u^*_K(t,\gamma'))\eta^*(d\gamma')\\&+
			\int_\Gamma \nabla_m f_0(t,\gamma'(t),e_t\sharp \eta^*,\gamma(t),u^*_K(t,\gamma'))\eta^*(d\gamma');
		\end{split}
	\end{equation} 
	\item  transversality condition:
	\begin{equation}\label{condition:trans_K}
		\begin{split}
			\psi(T,\gamma)\sharp \eta^*=-\nabla_x \sigma (&\gamma(T),e_T\sharp \eta^*)\\&-\int_{\Gamma}\nabla_m 
			\sigma (\gamma'(t),e_T\sharp \eta^*,\gamma(T))\eta^*(d\gamma')
	\end{split}\end{equation}  for $\eta^*$-a.e. $\gamma\in\Gamma$;
	\item  maximization of the Hamiltonian condition:  for a.e.  $s\in [0,T]$, and $\eta^*$-a.e. $\gamma\in\Gamma$,
	\begin{equation}\label{condition:maximum_K}
		\begin{split}
			H(s,\gamma(s),\psi(s,&\gamma),e_s\sharp\eta^*,u^*_K(s,\gamma))\\&=\max_{u\in U}H(s,\gamma(s),\psi(s,\gamma),e_s\sharp\eta^*,u)
	\end{split}\end{equation}  or, equivalently, for  a.e.  $s\in [0,T]$,
	\begin{equation}\label{condition:maximum_K_int}
		\begin{split}
			\int_\Gamma	H(&s,\gamma(s),\psi(s,\gamma),e_s\sharp\eta^*,u^*_K(s,\gamma)) \eta^*(d\gamma)\\&=\max_{\nu\in L^p(\Gamma,\mathcal{B}(\Gamma),\eta^*;U)}\int_{\Gamma}H(s,\gamma(s),\psi(s,\gamma),e_s\sharp\eta^*,\nu(\gamma))\eta^*(d\gamma).
	\end{split}\end{equation}
\end{itemize}

\end{theorem}
\begin{proof}
We choose $(\Omega,\mathcal{F},\pp)\triangleq(\Gamma,\mathcal{B}(\Gamma),\eta^*)$.
By Proposition~\ref{prop:Kantorovich_correspondence},  the Lagrangian process $(X^*,u^*_K)$, where $\widehat{X^*}$ is equal to $\operatorname{id}_\Gamma$, realizes $(\eta^*,u^*_K)$.
Theorem~\ref{th:Kantorovich_min_Lagrangian} gives that the process $(X^*,u^*_K)$ is a strong local $W_p$-minimizer  at $m_0$ in the framework of the Lagrangian approach.
Applying Theorem~\ref{th:PMP_Lagrangian} for the Lagrangian control process $(X^*,u^*_K)$
and $\pp=\eta^*$, we have that now equation~\eqref{eq:Psi} is~\eqref{eq:Psi_K} while conditions~\eqref{eq:transversality},~\eqref{condition:maximum_local}  take the forms of conditions
~\eqref{condition:trans_K},~\eqref{condition:maximum_K}  respectively. The equivalence of~\eqref{condition:maximum_K} and~\eqref{condition:maximum_K_int} is a particular case of the equivalence between~\eqref{condition:maximum_integral} and~\eqref{condition:maximum_local} proved in Theorem~\ref{th:PMP_Lagrangian}.
\end{proof}

\section{Eulerian approach}\label{sect:Euler}

This section is concerned with the Eulearian formulation of the mean field type control problems. Below, we study the links between local minimizers within  the Eulerian and Lagrangian approaches. Using this, we deduce the Pontryagin maximum principle for  the Eulerian formulation of the mean field type control problem. 

In this section, we assume condition~\ref{assumption:Uclosed}--\ref{assumption:derivative_sigma} and, additionally, we impose the following convexity assumption borrowed from \cite{Cavagnari_et_al_approaches}:
\begin{enumerate}[label=(C\arabic*)]
\item\label{assumption:convU} the set $U$ is a closed convex subset of a Banach space;
\item\label{assumption:epi}   the mapping $U\ni u\mapsto f(t,x, m,u)$ is affine in $u$, i.e., for $t\in [0,T]$, $x\in\rd$, $m\in\prd$, $u_1,u_2\in U$, $\alpha\in [0,1]$,
\[f(t,x,m,\alpha u_1+(1-\alpha )u_2)=
\alpha f(t,x,m, u_1)+(1-\alpha )f(t,x,m,u_2);\]
\item\label{assumption:conv0}  
the function $f_0$ is convex in $u$, i.e., for every $t\in [0,T]$, $x\in\rd$, $m\in\prd$, $u_1,u_2\in U$,  $\alpha\in [0,1]$,
\[f_0(t,x,m,\alpha u_1+(1-\alpha )u_2)\leq
\alpha f_0(t,x,m, u_1)+(1-\alpha )f_0(t,x,m,u_2).\]
\end{enumerate}

Notice that this condition is always fulfilled if one uses  relaxed controls~\cite{Cavagnari_et_al_approaches}.

\subsection{Control processes within the Eulerian formulation}
To simplify notation, we put, given a measure-valued function $m(\cdot)\in C([0,T];\mathcal{P}(\rd))$, 
\[\eup{m(\cdot)}\triangleq L^p\big([0,T]\times\rd,\mathscr{B}_T\otimes\mathcal{B}(\rd),\lambda\otimes (m(t))_{t\in [0,T]};U\big).\] Analogously, let
\[\erp{m(\cdot)}\triangleq L^p\big([0,T]\times\rd,\mathscr{B}_T\otimes\mathcal{B}(\rd),\lambda\otimes (m(t))_{t\in [0,T]};\rd\big).\] The norms on $\eup{m(\cdot)}$ and $\erp{m(\cdot)}$ are still denoted by $\norm{\cdot}{L^p}$.

\begin{definition}
We say that a pair $(m(\cdot),u_E)$, where 
\begin{itemize}
	\item $m(\cdot)\in \operatorname{AC}^p([0,T];\mathcal{P}^p(\rd))$, 
	\item $u_E\in \eup{m(\cdot)}$, 
\end{itemize}
is an Eulerian control process if $m(\cdot)$ and the velocity field
$v_E:[0,T]\times\rd\rightarrow\rd$ defined by the rule 
\begin{equation}\label{intro:vector_field}
	v_E(t,x)\triangleq f(t,x,m(t),u_E(t,x))
\end{equation}
satisfy    the following continuity equation:
\[\partial_tm(t)+\operatorname{div} (v_E(t,x)m(t))=0\]
in the sense of distribution, i.e., for every $\varphi\in C_c^\infty((0,T)\times\rd)$, 
\[\int_0^T\int_{\rd}[\partial_t\varphi(t,x)+\nabla_x\varphi(t,x) v_E(t,x)]m(t,dx)dt=0.\]
\end{definition} 

Notice that due to assumption~\ref{assumption:function} the vector field $v_E$ defined by~\eqref{intro:vector_field} for each Eulerian control process $(m(\cdot),u_E)$ lies in $\erp{m(\cdot)}$.

The outcome of the Eulerian control process $(m(\cdot),u_E)$  is evaluated by the formula:
\[\begin{split}
J_E(m(\cdot),u_E)\triangleq \int_{\rd}\sigma(x,&m(T))m(T,dx)\\&+ \int_0^T\int_{\rd} f_0(t,x,m(t),u_E(t,x)) m(t,dx)dt.\end{split}\]

\begin{definition} Let $m_0\in\prd$. We denote the set of  Eulerian control processes $(m(\cdot),u)$ satisfying the initial condition $m(0)=m_0$ by $\operatorname{Adm}_E(m_0)$.
\end{definition}

To study the link between the Eulerian and Langrangian approaches, let us introduce the following notions.
\begin{definition}\label{def:Lagrangian_Euler1}
Let $(m(\cdot),u_E)$ be an  Eulerian control process.
A Lagrangian control process $(X,u_L)$ defined on a standard probability space $(\Omega,\mathcal{F},\pp)$ realizes $(m(\cdot),u_E)$ provided that
\begin{itemize}
	\item for every $t\in [0,T]$,
	\begin{equation}\label{eq:L-Emu}
		m(t)= X(t)\sharp \pp;\end{equation}
	\item for a.e. $t\in [0,T]$ and $\pp$-a.e. $\omega\in\Omega$,
	\begin{equation}\label{eq:L-Eu}
		u_L(t,\omega)=u_E(t,X(t,\omega)).
	\end{equation}
\end{itemize}

\end{definition}
Notice that these conditions   yield the equality
\[J_E(m(\cdot),u_E)=J_L(X,u_L).\]

The next proposition states that each Eulerian process can be realized by a Lagrangian one.

\begin{proposition}\label{prop:Euler_correspondence}
Assume that  $(m(\cdot),u_E)$  is an Eulerian control process.
Furthermore, let $(\Omega,\mathcal{F},\pp)$ be a standard probability space  such that at least one the following conditions satisfies:
\begin{enumerate}[label=($\Omega$\arabic*)]
	\item\label{Omega:cond:no_atom}  the probability  $\pp$ has no atoms,
	\item\label{Omega:cond:eta}   $\Omega=\Gamma$, $\mathcal{F}=\mathcal{B}(\Gamma)$, $\pp=\eta\in\mathcal{P}(\Gamma)$, while $\eta$-a.e. $\gamma$ solves the equation 
	\begin{equation}\label{eq:ode_gamma_u_E}
		\frac{d}{dt}\gamma(t)=f(t,\gamma(t),m(t),u_E(t,\gamma(t)))
	\end{equation} and $m(t)=e_t\sharp \eta$. 
\end{enumerate}

Then, there exists a Lagrangian process $(X,u_L)$ defined on $(\Omega,\mathcal{F},\pp)$ that realizes $(m(\cdot),u_E)$.
Furthermore, in  case ~\ref{Omega:cond:eta}, we can put $\widehat{X}=\operatorname{id}_{\Gamma}$ and $u_E(t,\gamma(t))=u_L(t,\gamma)$.
\end{proposition}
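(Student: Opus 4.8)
The plan is to factor the construction through the Kantorovich formulation and then invoke the already-established correspondence of Proposition~\ref{prop:Kantorovich_correspondence}. The bridge between the Eulerian description (a solution of the continuity equation) and the Kantorovich description (a measure on trajectories concentrated on ODE solutions) is the superposition principle~\cite{Ambrosio}. Thus the first task is to produce a probability $\eta\in\mathcal{P}(\Gamma)$ with $e_t\sharp\eta=m(t)$ for all $t\in[0,T]$ and such that $\eta$-a.e.\ curve $\gamma$ solves $\dot\gamma(t)=v_E(t,\gamma(t))$, with $v_E$ given by~\eqref{intro:vector_field}. The hypothesis of the superposition principle is the integrability $\int_0^T\int_{\rd}\|v_E(t,x)\|m(t,dx)\,dt<\infty$, which holds because $v_E\in L^p(\lambda\star m;\rd)$ while each $m(t)$ is a probability; the narrow continuity of $m(\cdot)$ needed for the principle follows from $m(\cdot)\in\operatorname{AC}^p([0,T];\prd)$.

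In case~\ref{Omega:cond:eta} no superposition argument is needed: the measure $\eta=\pp$ is handed to us together with the property that $\eta$-a.e.\ $\gamma$ solves~\eqref{eq:ode_gamma_u_E} and $m(t)=e_t\sharp\eta$, so I would simply take $\eta:=\pp$; in case~\ref{Omega:cond:no_atom} I would take $\eta$ to be the measure supplied by the superposition principle. In either case I would next set the Kantorovich control $u_K(t,\gamma)\triangleq u_E(t,\gamma(t))$ and check that $(\eta,u_K)$ is an admissible Kantorovich control process. The ODE requirement~\eqref{eq:gamma} is immediate since $f(t,\gamma(t),e_t\sharp\eta,u_K(t,\gamma))=f(t,\gamma(t),m(t),u_E(t,\gamma(t)))=v_E(t,\gamma(t))$. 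The control integrability reduces to the change of variables induced by $e_t\sharp\eta=m(t)$, namely $\int_0^T\int_\Gamma\|u_K(t,\gamma)\|^p\eta(d\gamma)\,dt=\int_0^T\int_{\rd}\|u_E(t,x)\|^p m(t,dx)\,dt<\infty$, which is finite because $u_E\in L^p(\lambda\star m;U)$.

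The one genuinely new estimate is the verification that $\eta\in\mathcal{P}^p(\Gamma)$, i.e.\ $\int_\Gamma\|\gamma\|_\infty^p\eta(d\gamma)<\infty$ (needed both to make $(\eta,u_K)$ Kantorovich-admissible and, in case~\ref{Omega:cond:eta}, to ensure $\operatorname{id}_\Omega\in\xp$), since the superposition principle by itself only returns $\eta\in\mathcal{P}(\Gamma)$. Here I would use the integral form of the ODE to bound $\|\gamma\|_\infty\le\|\gamma(0)\|+\int_0^T\|v_E(s,\gamma(s))\|\,ds$, then apply Jensen's inequality in time and Fubini to get
\[
\int_\Gamma\|\gamma\|_\infty^p\,\eta(d\gamma)\le 2^{p-1}\mathcal{M}_p^p(m(0))+2^{p-1}T^{p-1}\int_0^T\int_{\rd}\|v_E(s,x)\|^p\,m(s,dx)\,ds,
\]
both terms being finite by $m(0)\in\prd$ and $v_E\in L^p(\lambda\star m;\rd)$.

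With $(\eta,u_K)$ shown to be an admissible Kantorovich process, Proposition~\ref{prop:Kantorovich_correspondence} applied on $(\Omega,\mathcal{F},\pp)$ produces a Lagrangian process $(X,u_L)$ realizing $(\eta,u_K)$, and in case~\ref{Omega:cond:eta} it yields $X=\operatorname{id}_\Omega$, $u_L=u_K$. It then remains to read off the two Eulerian realization conditions of Definition~\ref{def:Lagrangian_Euler1}: condition~\eqref{eq:L-Emu} follows from $m(t)=e_t\sharp\eta=e_t\sharp(\widehat{X}\sharp\pp)=X(t)\sharp\pp$, and condition~\eqref{eq:L-Eu} from $u_L(t,\omega)=u_K(t,\widehat{X}(\omega))=u_E(t,\widehat{X}(\omega)(t))=u_E(t,X(t,\omega))$. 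I expect the main obstacle to be invoking the superposition principle and checking its integrability hypothesis together with the moment estimate for $\eta$; once these are in place, the remainder is a mechanical reduction to the Kantorovich case.
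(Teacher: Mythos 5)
Your proposal is correct, and it reaches the conclusion by a genuinely different organization than the paper. The paper works directly at the Lagrangian level: in case~\ref{Omega:cond:eta} it sets $X(\cdot,\gamma)=\gamma$, in case~\ref{Omega:cond:no_atom} it invokes the superposition principle \cite[Theorem 8.2.1]{Ambrosio} to get a trajectory measure $\eta'$ and then the Skorokhod representation \cite[Theorem 6.7]{Billingsley} to get $X$ with $\widehat{X}\sharp\pp=\eta'$; it then defines $u_L(t,\omega)=u_E(t,X(t,\omega))$ and verifies the Lagrangian admissibility conditions by hand --- the ODE, the inclusion $X\in\xp$ via the pathwise bound~\eqref{ineq:norm_X_v_E} combined with H\"older's inequality, and $u_L\in\up$ by change of variables. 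You instead package the same superposition output as a Kantorovich process $(\eta,u_K)$ with $u_K(t,\gamma)=u_E(t,\gamma(t))$, check Kantorovich admissibility, and let Proposition~\ref{prop:Kantorovich_correspondence} do the descent to the Lagrangian level, so that the ODE and integrability checks for $(X,u_L)$ and the ``furthermore'' clause in case~\ref{Omega:cond:eta} are inherited rather than redone; conditions~\eqref{eq:L-Emu} and~\eqref{eq:L-Eu} then follow by the bookkeeping you describe. Your one new estimate, the $p$-th moment bound showing $\eta\in\mathcal{P}^p(\Gamma)$, is essentially the paper's $X\in\xp$ computation transplanted from $\Omega$ to $\Gamma$, and it is a worthwhile addition: the paper asserts that the superposition measure lies in $\mathcal{P}^p(\Gamma)$ when citing \cite[Theorem 8.2.1]{Ambrosio}, although that theorem only returns $\eta'\in\mathcal{P}(\Gamma)$, so your Jensen--Fubini argument makes explicit a step the paper glosses over (harmlessly, since the paper verifies $X\in\xp$ directly). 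Two minor remarks: the hypothesis of \cite[Theorem 8.2.1]{Ambrosio} is $p$-integrability of the velocity field rather than the $L^1$ condition you quote, but since $v_E\in L^p(\lambda\star m;\rd)$ the correct hypothesis holds anyway; and, like the paper's direct proof, your route uses none of the convexity assumptions \ref{assumption:convU}--\ref{assumption:conv0} (neither the admissibility of $(\eta,u_K)$ nor Proposition~\ref{prop:Kantorovich_correspondence} needs them), so the remark following the paper's proof remains valid for your argument.
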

\begin{proof} First, let us construct a process $X$. If $(\Omega,\mathcal{F},\pp)$ satisfies condition~\ref{Omega:cond:eta}, we simply put $\widehat{X}(\gamma)\triangleq\gamma$. Hence,  ${X}(t,\gamma)\triangleq \gamma(t)$. The  case when $(\Omega,\mathcal{F},\pp)$ satisfies condition~\ref{Omega:cond:no_atom} is reduced to the previous one in the following way. Since $v_E$ defined by~(\ref{intro:vector_field}) lies in $\erp{m(\cdot)}$, one can apply
\cite[Theorem 8.2.1]{Ambrosio} and construct a probability measure $\eta'\in\mathcal{P}^p(\Gamma)$ such that $m(t)=e_t\sharp\eta'$ and $\eta'$-a.e. $\gamma\in \Gamma$ satisfy~\eqref{eq:ode_gamma_u_E}. Furthermore, since $\Gamma$ is Polish space, due to \cite[Theorem~3.1(i)]{Skorokhod_Representation}, there exists a $\mathcal{F}$/$\mathcal{B}$-measurable map $\widehat{X}:\Omega\rightarrow\Gamma$ such that $\widehat{X}\sharp\pp=\eta'.$ Letting $X(t,\omega)\triangleq \widehat{X}(\omega)(t)$, we construct the desired process $X$ for the case where $(\Omega,\mathcal{F},\pp)$ satisfies condition~\ref{Omega:cond:no_atom}. 

Notice that  condition~\eqref{eq:L-Emu} holds true for the process $X$ in both cases. Furthermore, we define \[u_L(t,\omega)\triangleq u_E(t,X(t,\omega)). \]  Therefore,~\eqref{eq:L-Eu} is fulfilled. 

Now, let us show that the process $(X,u_L)$ is admissible. First, we claim that, for $\pp$-a.e. $\omega\in\Omega$, $X(\cdot,\omega)$ solves
\begin{equation}\label{eq:X_v}
	\frac{d}{dt}X(t,\omega)=v_E(t,X(t,\omega)).\end{equation} Here $v_E(t,x)$ is defined by~\eqref{intro:vector_field}.
Indeed, if $\probspace$ satisfies condition~\ref{Omega:cond:eta}, this follows from the equality $X(\cdot,\gamma)\triangleq \gamma$. In  case~\ref{Omega:cond:no_atom}, we use the construction of the probability $\eta'\in\mathcal{P}^p(\Gamma)$ that is concentrated on curves satisfying~\eqref{eq:ode_gamma_u_E}. Equality~(\ref{intro:vector_field}) and the construction of $u_L$ implies that
\[v_E(t,X(t,\omega))=f(t,X(t,\omega),m(t),u_L(t,\omega)).\] This and~\eqref{eq:X_v} yield that, for $\pp$-a.e. $\omega\in\Omega$, $X(\cdot,\omega)$ is a solution of  the ODE
\[\frac{d}{dt}X(t,\omega)=f(t,X(t,\omega),m(t),u_L(t,\omega)).\]
Moreover, we have that $\pp$-a.s. 
\[\|\widehat{X}\|_\infty\leq \|X(0)\|+\int_{0}^T\|v_E(t,X(t))\|dt.\] This and  the construction of $X$ imply that
\begin{equation}\label{ineq:norm_X_v_E}
	\begin{split}
		\norm{X}{\xp}&\leq \mathcal{M}_p(m(0))+\int_0^T\int_\Omega\|v_E(t,X(t,\omega))\|\pp(d\omega)dt\\&=\mathcal{M}_p(m(0))+ \int_0^T\int_{\rd}\|v_E(t,x)\|m(t,dx)dt.\end{split}
\end{equation} Notice that 
\[\int_0^T\expect\|v_E(t,X(t))\|^pdt=\int_0^T\int_{\rd}\|v_E(t,x)\|^pm(t,dx)dt\]
Due to assumption~\ref{assumption:sublinear} and inclusion $u_E\in \eup{m(\cdot)}$, we have that $v_E\in\erp{m(\cdot)}$. Using this,~\eqref{ineq:norm_X_v_E} and the H\"older's inequality, we conclude that $X$ belongs to $\xp$.

To complete the proof, let us show that $u_L\in \up$. Indeed,
\[\begin{split}\norm{u_L}{\up}^p&=\int_0^T\expect\|u_L(t)\|^pdt=\int_0^T\expect\|u_E(t,X(t))\|^pdt\\&=
	\int_0^T\int_{\rd}\|u_E(t,x)\|^pm(t,dx)dt<+\infty.\end{split}\] The latter inequality is due to the fact that each Eulerian process $(m(\cdot),u_E)$ satisfies $u_E\in \eup{m(\cdot)}$.
	\end{proof}
	\begin{remark}
Notice that the previous proposition does not rely on the convexity assumption.
\end{remark}

\subsection{Local minimizers within the Eulerian formulation}
\begin{definition} 
An Eulerian control process $(m^*(\cdot),u^*_E)\in \operatorname{Adm}_E(m_0)$ is called a  strong local minimizer if there exists $\varepsilon>0$ such that $J_E(m(\cdot),u)\geq J_E(m^*(\cdot),u^*)$ for all admissible Eulerian processes $(m(\cdot),u)\in \operatorname{Adm}_E(m_0)$ satisfying  $W_p(m(t),m^*(t))\leq \varepsilon$
when $t\in [0,T]$.
\end{definition}

The following theorem states that each Eulerian strong minimizer corresponds to a minimizer within the Lagrangian approach.

\begin{theorem}\label{th:Euler_min_Lagrangian}
Let $(m^*(\cdot),u^*_E)$ be a strong local minimizer in the  Eulerian framework and let $(X^*,u^*_L)$ be an admissible Lagrangian process defined on some standard probability space $(\Omega,\mathcal{F},\pp)$ that realizes  $(m^*(\cdot),u^*_E)$. Then,
$(X^*,u^*_L)$ is a strong local $W_p$-minimizer at $m_0$ within the Lagrangian framework.
\end{theorem}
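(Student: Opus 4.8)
The plan is to follow the same scheme as in the proof of Theorem~\ref{th:Kantorovich_min_Lagrangian}, replacing the Kantorovich improvement Lemma~\ref{lm:Kantorovich2_correspondence} by an Eulerian counterpart. (Write the given minimizer as $(m^*(\cdot),u_E^*)$, i.e.\ the theorem's $(m(\cdot),u_E^*)$.) Recall that in the Kantorovich case one could, for any Lagrangian process, \emph{select} among all labels sharing a given trajectory the cheapest representative, because that description keeps the identity of each curve. In the Eulerian setting this is impossible: the admissible velocity is the state feedback $v_E(t,x)=f(t,x,m(t),u_E(t,x))$, so every label $\omega$ occupying the point $x$ at time $t$ must receive the same control. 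The remedy is to \emph{average} rather than select, and this is exactly where the convexity hypotheses \ref{assumption:convU}--\ref{assumption:conv0} become indispensable.

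Accordingly, the first and main step is an \emph{Eulerian improvement lemma}: for every admissible Lagrangian process $(X,u_L)$ on $\probspace$ there exists an admissible Eulerian process $(m(\cdot),u_E)\in\operatorname{Adm}_E(m_0)$, with $m_0=X(0)\sharp\pp$, such that $m(t)=X(t)\sharp\pp$ for all $t$ and $J_E(m(\cdot),u_E)\le J_L(X,u_L)$. To build it, I would set $m(t)\triangleq X(t)\sharp\pp$ and disintegrate $\lambda\otimes\pp$ along $(t,\omega)\mapsto(t,X(t,\omega))$, whose push-forward is $\lambda\star m$; this yields a measurable family $\{\rho_{t,x}\}$ of probabilities on $\Omega$, each concentrated on $\{\omega: X(t,\omega)=x\}$. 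Define the feedback $u_E(t,x)\triangleq\int_\Omega u_L(t,\omega)\,\rho_{t,x}(d\omega)$. Since $U$ is closed and convex (\ref{assumption:convU}), $u_E(t,x)\in U$, and by Jensen's inequality $\|u_E\|_{L^p(\lambda\star m)}\le\|u_L\|_{\up}<\infty$. Because $f$ is affine in $u$ (\ref{assumption:epi}, extended to integral averages by continuity \ref{assumption:function}), the induced velocity satisfies $v_E(t,x)=\int_\Omega f(t,X(t,\omega),m(t),u_L(t,\omega))\,\rho_{t,x}(d\omega)$, i.e.\ $v_E(t,x)$ is the conditional expectation of the Lagrangian velocity $\frac{d}{dt}X(t,\cdot)$ given $X(t)=x$. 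A direct test-function computation then shows that the marginal $m(\cdot)$ solves the continuity equation driven by $v_E$, so $(m(\cdot),u_E)$ is admissible and $m(\cdot)\in\operatorname{AC}^p$. Finally, convexity of $f_0$ in $u$ (\ref{assumption:conv0}) and Jensen give, $\lambda\star m$-a.e., $f_0(t,x,m(t),u_E(t,x))\le\int_\Omega f_0(t,x,m(t),u_L(t,\omega))\,\rho_{t,x}(d\omega)$; integrating this and using that the terminal term depends only on $X(T)$ and $m(T)$ yields $J_E(m(\cdot),u_E)\le J_L(X,u_L)$.

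With the lemma in hand the conclusion is immediate. Let $\varepsilon>0$ be the radius of Eulerian minimality of $(m^*(\cdot),u_E^*)$, and take any $(X,u)\in\operatorname{Adm}_L(m_0)$ with $W_p(X(t)\sharp\pp,X^*(t)\sharp\pp)\le\varepsilon$ for all $t$. Since $(X^*,u_L^*)$ realizes $(m^*(\cdot),u_E^*)$ we have $X^*(t)\sharp\pp=m^*(t)$, so, writing $m(t)\triangleq X(t)\sharp\pp$, the bound becomes $W_p(m(t),m^*(t))\le\varepsilon$. The improvement lemma produces $(m(\cdot),u_E)\in\operatorname{Adm}_E(m_0)$ with $J_L(X,u)\ge J_E(m(\cdot),u_E)$; Eulerian minimality gives $J_E(m(\cdot),u_E)\ge J_E(m^*(\cdot),u_E^*)$; and the equality noted after Definition~\ref{def:Lagrangian_Euler1} gives $J_E(m^*(\cdot),u_E^*)=J_L(X^*,u_L^*)$. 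Chaining these, $J_L(X,u)\ge J_L(X^*,u_L^*)$, which is exactly the defining inequality of a strong local $W_p$-minimizer (Definition~\ref{def:minima_W_strong}).

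The genuinely delicate point is the improvement lemma, not the final chaining. Three issues require care: the joint measurability in $(t,x)$ of the disintegration $\{\rho_{t,x}\}$ and hence of $u_E$; the fact that the Bochner average defining $u_E$ lands in the closed convex set $U$ and is $p$-integrable; and the rigorous passage, via test functions, from the Lagrangian ODE for $X$ to the continuity equation for $m(\cdot)$ with conditional-expectation velocity $v_E$, together with the resulting $\operatorname{AC}^p$ regularity. Conceptually, the essential obstacle is that the Eulerian feedback constraint forces averaging of controls over each level set $\{X(t)=x\}$, so that both admissibility of $(m(\cdot),u_E)$ and the inequality $J_E\le J_L$ can hold \emph{only} under the affine/convex structure \ref{assumption:epi}--\ref{assumption:conv0}: without it the averaged control would neither reproduce the averaged velocity nor control the cost.
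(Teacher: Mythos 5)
Your proposal is correct and takes essentially the same route as the paper: its Lemma~\ref{lm:Euler2_correspondence} is exactly your Eulerian improvement lemma --- define $m(t)\triangleq X(t)\sharp\pp$, disintegrate, average $u_L$ over the level sets $\{X(t)=x\}$ to get $u_E$, use \ref{assumption:convU}--\ref{assumption:conv0} with Jensen's inequality for admissibility of the control and the cost comparison, verify the continuity equation by testing against $\varphi\in C_c^\infty$ and get $\operatorname{AC}^p$ regularity from \cite[Theorem~8.3.1]{Ambrosio} --- followed by the identical chaining $J_L(X,u_L)\geq J_E(m(\cdot),u_E)\geq J_E(m^*(\cdot),u_E^*)=J_L(X^*,u_L^*)$. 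The only (harmless) deviation is that you disintegrate $\lambda\otimes\pp$ jointly along $(t,\omega)\mapsto(t,X(t,\omega))$, which makes the joint measurability of $(t,x)\mapsto u_E(t,x)$ slightly more explicit than the paper's per-$t$ disintegration of $\pp$.
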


The proof of this statement involves the notion of improvement of a Lagrangian process by an Eulerian one and the fact that such improvement always exists.

\begin{definition}\label{def:Lagrangian_Euler2}
Let $(X,u_L)$ be a Lagrangian control process defined on a standard probability space $(\Omega,\mathcal{F},\pp)$. We say that an Eulerian control process $(m(\cdot),u_E)$  improves $(X,u_L)$ if it satisfies ~\eqref{eq:L-Emu} and 
$J_L(X,u_L)\geq J_E(m(\cdot),u_E)$.
\end{definition}

\begin{lemma}\label{lm:Euler2_correspondence}
Let   $(X,u_L)$ 
be a Lagrangian control process  defined on a standard probability space $(\Omega,\mathcal{F},\pp)$.
Then, there exists an Eulerian process $(m(\cdot),u_E)$  that improves $(X,u_L)$.
\end{lemma}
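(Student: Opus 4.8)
The plan is to mimic the structure of Lemma~\ref{lm:Kantorovich2_correspondence}, but instead of lifting the open-loop control $u_L$ to a trajectory-dependent Kantorovich control, I would \emph{project} it onto a genuine state feedback $u_E(t,x)$ by conditional averaging over the fibre $\{X(t)=x\}$. The convexity hypotheses~\ref{assumption:convU}--\ref{assumption:conv0} are exactly what make this averaging both admissible (the averaged control field reproduces the same flow) and non-increasing for the cost. First I would set $m(t)\triangleq X(t)\sharp\pp$ and note that, since $X\in\xp$ solves the state ODE with right-hand side controlled by assumption~\ref{assumption:sublinear}, the curve $m(\cdot)$ belongs to $\operatorname{AC}^p([0,T];\prd)$ with metric derivative dominated by $\|f(t,\cdot,m(t),\cdot)\|_{L^p}$; thus~\eqref{eq:L-Emu} holds by construction and the only thing to produce is a feedback $u_E$.

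To build $u_E$, I would consider the map $(t,\omega)\mapsto(t,X(t,\omega),u_L(t,\omega))$ and push $\lambda\otimes\pp$ forward to a measure $\beta$ on $[0,T]\times\rd\times U$. Its projection onto $[0,T]\times\rd$ is precisely $\lambda\star m$ (for each $t$, $X(t)\sharp\pp=m(t)$), so by the disintegration theorem~\cite{Dellacherie_Meyer} there is a $(\lambda\star m)$-measurable family of probabilities $\{\kappa_{t,x}\}$ on $U$. I would then define $u_E(t,x)\triangleq\int_U u\,\kappa_{t,x}(du)$, i.e. $u_E(t,x)=\expect[u_L(t)\mid X(t)=x]$. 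Since $U$ is closed and convex by~\ref{assumption:convU} and each $\kappa_{t,x}$ has finite first moment, the barycenter $u_E(t,x)$ lies in $U$; and by the conditional Jensen inequality $\|u_E\|_{L^p(\lambda\star m)}\le\|u_L\|_{\up}<\infty$, so $u_E$ is an admissible Eulerian control field.

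Next I would verify the continuity equation for $(m(\cdot),u_E)$. Testing $m(\cdot)$ against $\varphi\in C_c^\infty((0,T)\times\rd)$ and differentiating $t\mapsto\expect\varphi(t,X(t))$ gives $\int_0^T\expect[\partial_t\varphi(t,X(t))+\nabla\varphi(t,X(t))f(t,X(t),m(t),u_L(t))]\,dt=0$. Disintegrating the inner expectation through $\{\kappa_{t,x}\}$ and using that $u\mapsto f(t,x,m,u)$ is affine by~\ref{assumption:epi}, so that it commutes with the barycenter $\kappa_{t,x}$ (here the affine property~\ref{assumption:epi} must be promoted from finite convex combinations to integral barycenters, which is legitimate under the linear growth in~\ref{assumption:sublinear}), I would obtain $\int_U f(t,x,m(t),u)\kappa_{t,x}(du)=f(t,x,m(t),u_E(t,x))=v_E(t,x)$. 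Hence the effective velocity of the flow $m(\cdot)$ is exactly the Eulerian field $v_E$, and the distributional continuity equation holds; so $(m(\cdot),u_E)\in\operatorname{Adm}_E(m_0)$.

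Finally, for the cost I would compare termwise. The terminal terms agree because $m(T)=X(T)\sharp\pp$ gives $\expect\sigma(X(T),m(T))=\int_{\rd}\sigma(x,m(T))m(T,dx)$. For the running cost, disintegrating as above and applying Jensen's inequality together with the convexity of $f_0$ in $u$ from~\ref{assumption:conv0} yields $\int_U f_0(t,x,m(t),u)\kappa_{t,x}(du)\ge f_0(t,x,m(t),u_E(t,x))$ pointwise in $(t,x)$; integrating against $\lambda\star m$ gives $J_L(X,u_L)\ge J_E(m(\cdot),u_E)$, so $(m(\cdot),u_E)$ improves $(X,u_L)$. The main obstacle I anticipate is the measurability bookkeeping of the disintegration jointly in $(t,x)$ and the justification that the affine map $f(t,x,m,\cdot)$ and the convex map $f_0(t,x,m,\cdot)$ interact with the barycenter and Jensen steps under the merely $L^p$ integrability of $u_L$; once that is settled, the flow identity and the cost inequality follow directly.
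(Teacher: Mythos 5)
Your proposal is correct and follows essentially the same route as the paper's proof: set $m(t)=X(t)\sharp\pp$, project $u_L$ onto a feedback by conditional averaging over the fibres $\{X(t)=x\}$ via the disintegration theorem, use~\ref{assumption:convU}--\ref{assumption:epi} so that the barycenter stays in $U$ and commutes with the affine velocity map (giving the continuity equation for $(m(\cdot),u_E)$), and conclude $J_L(X,u_L)\geq J_E(m(\cdot),u_E)$ by Jensen's inequality with~\ref{assumption:conv0}. The only deviation is cosmetic: the paper disintegrates $\pp$ over $x$ for each fixed $t$, whereas you disintegrate the joint law of $(t,X(t),u_L(t))$ on $[0,T]\times\rd\times U$, which if anything handles the joint $(t,x)$-measurability of $u_E$ more cleanly.
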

\begin{proof}

We define the  flow of probabilities $m(\cdot)\in C([0,T];\prd)$ and the velocity field $v_L\in B([0,T]\times\Omega,\mathscr{B}_T\otimes\mathcal{F};\mathbb{R}^d)$ by the following rules: for all $t\in [0,T]$ and $\omega\in\Omega$,
\[m(t)\triangleq X(t)\sharp\pp,\quad v_L(t,\omega)\triangleq f(t,X(t,\omega),m(t),u_L(t,\omega)).\]
So, ~\eqref{eq:L-Emu} holds true. Moreover, since $(X,u_L)$ is an admissible process, using assumption~\ref{assumption:function}, we conclude that
\begin{equation}
	\label{incl:vLLp}
	v_L\in L^p([0,T]\times\Omega,\mathscr{B}_T\otimes\mathcal{F},\lambda\otimes\pp;\mathbb{R}^d).
\end{equation}

Now let us define a strategy $u_E\in B([0,T]\times\rd,\mathscr{B}_T\otimes\mathcal{B}(\rd);U)$ and  a velocity field $v_E\in B([0,T]\times\rd,\mathscr{B}_T\otimes\mathcal{B}(\rd);\mathbb{R}^d)$. To this end, we use the disintegration theorem (see \cite[Theorem 5.3.1]{Ambrosio} or \cite[III-70]{Dellacherie_Meyer}) and, given $t\in [0,T]$, find a system of probability measures $\{\pp_x^t\}_{x\in\rd}$ such that, for each Borel measurable map $\phi:\Omega\rightarrow [0,+\infty]$,
\begin{equation}\label{equality:exp_euler_P}\expect\phi=\int_{\rd}\int_{\Omega_x^t}\phi(\omega)\pp_x^t(d\omega)m(t,dx),\end{equation} where each probability $\pp_x^t$ is concentrated on the set 
$\Omega_x^t\triangleq \{\omega\in\Omega:\, X(t,\omega)=x\}.$
Using this, we put
\begin{equation}\label{intro:u_E}
	u_E(t,x)\triangleq \int_{\Omega_x^t} u_L(t,\omega)\pp_x^t(d\omega),
\end{equation}
\[ v_E(t,x)\triangleq \int_{\Omega_x^t} v_L(t,\omega)\pp_x^t(d\omega).\]  Assumption~\ref{assumption:convU} gives that $u_E(t,x)\in U$.
By the Jensen's inequality, we have that 
\[\begin{split}
	\|u_E\|_{L^p}^p&=\int_0^T\int_{\rd}\bigg\|\int_{\Omega_x^t} u_L(t,\omega)\pp_x^t(d\omega)\bigg\|^p m(t,dx)dt\\&\leq \int_\Omega\int_0^T\|u_L(t,\omega)\|^pdt\pp(d\omega). \end{split}\]
This and the inclusion $u_L\in\up$ yield that the Eulerian control $u_E$ lies in~$\eup{m(\cdot)}$. Due to~\ref{assumption:epi}, we have that
\[\begin{split}f(t,x,m(t),u_E(t,x))&=
	f\Bigg(t,x,m(t),\int_{\Omega_x^t}u_L(t,\omega)\pp_x^t(d\omega)\Bigg)\\&=
	\int_{\Omega_x^t} f(t,X(t,\omega),m(t),u_L(t,\omega))\pp_x^t(d\omega)\\&=
	\int_{\Omega_x^t} v_L(t,\omega)\pp_x^t(d\omega)=v_E(t,x)\end{split}\]
for each $t\in[0,T]$ and $m(t)$-a.e. $x$.

We claim that 
$m(\cdot)$ is a distributional solution of the equation
\begin{equation}\label{eq:cont_vE}
	\partial_tm(t)+\operatorname{div} (v_E(t,x)m(t))=0
\end{equation} on $[0,T]\times\rd$.
Indeed, choose a smooth
function $\varphi\in C^\infty_c((0,T)\times\rd)$.
Since $(X,u_L)$ is an admissible Lagrangian process, we have that, for $\pp$-a.e. $\omega\in\Omega$,
\[\int_0^T[\partial_t\varphi(t,X(t,\omega))+\nabla_x\varphi(t,X(t,\omega))f(t,X(t,\omega),m(t),u_L(t,\omega))]dt=0.
\] Integrating this equality against the probability $\pp$ and using the equality $v_L(t,\omega)=f(t,X(t,\omega),m(t),u_L(t,\omega))$, we obtain
\[\int_\Omega\int_0^T[\partial_t\varphi(t,X(t,\omega))+\nabla_x\varphi(t,X(t,\omega))v_L(t,\omega)]dt\pp(d\omega)=0.\]
Notice that, for every $t\in [0,T]$,
\[\begin{split}\int_\Omega\nabla_x\varphi(t,X(t,\omega))v_L(t,\omega)\pp(d\omega)&=\int_{\rd}\nabla_x\varphi(t,x)\int_{\Omega_x^t} v_L(t,\omega) \pp^t_x(d\omega)m(t,dx)\\&=\int_{\rd}\nabla_x\varphi(t,x)v_E(t,x)m(t,dx).\end{split}\]
Analogously,
\[\int_\Omega\partial_t\varphi(t,X(t,\omega))\pp(d\omega)=\int_{\rd}\partial_t\varphi(t,x)m(t,dx).\] Therefore, $m(\cdot)$ satisfies~\eqref{eq:cont_vE} in the distributional sense. Furthermore, since $v_L\in \erp{m(\cdot)}$, we have that the mapping $t\mapsto \norm{v(t,\cdot)}{L^p}$ lies in $L^p([0,T];\mathbb{R})$. Hence, from the fact that $m(\cdot)$ is the distributional solution of~\eqref{eq:cont_vE} (see  \cite[Theorem~8.3.1]{Ambrosio}), we deduce that $m(\cdot)\in\operatorname{AC}^p([0,T];\prd)$.

Thus, we have proved that $(m(\cdot),u_E)$ is an Eulerian control process.

Finally, let us show that
$J_L(X,u_L)\geq J_E(m(\cdot),u_E)$.

By construction, we have that
\begin{equation}\label{equality:sigmas_X_m}
	\int_\Omega\sigma(X(T,\omega),m(T))\pp(d\omega)=\int_{\rd}\sigma(x,m(T))m(T,dx).
\end{equation}
Furthermore, due to \eqref{equality:exp_euler_P}, we have that
\[\begin{split}\int_{0}^T&\int_\Omega f_0(t,X(t,\omega),m(t),u_L(t,\omega))\pp(d\omega)dt\\&=
	\int_{0}^T\int_{\rd}\int_{\Omega_x^t}	f_0(t,X(t,\omega),m(t),u_L(t,\omega))\pp_x^t(d\omega)m(t,dx)dt
	\\&=\int_{0}^T\int_{\rd}\int_{\Omega_x^t}  f_0(t,x,m(t),u_L(t,\omega))\pp_x^t(d\omega)m(t,dx)dt.\end{split}
\] The definition of the control $u_E$ (see~\eqref{intro:u_E}) and assumption~\ref{assumption:conv0} give that
\[\int_{\Omega_x^t} f_0(t,x,m(t),u_L(t,\omega))\pp_x^t(d\omega)\geq f_0(t,x,m(t),u_E(t,x)).\] Therefore,
\[\begin{split}
	\int_{0}^T\int_\Omega f_0(t,X(t,&\omega),m(t),u_L(t,\omega))\pp(d\omega)dt\\&\geq \int_{0}^T\int_{\rd}f_0(t,x,m(t),u_E(t,x))m(t,dx)dt.
\end{split}\]
Combining this with~\eqref{equality:sigmas_X_m}, we arrive at the inequality  $J_L(X,u_L)\geq J_E(m(\cdot),u_E)$. 
\end{proof}

Now we are ready to prove the main result of this section that is Theorem~\ref{th:Euler_min_Lagrangian}.
\begin{proof}[Proof of Theorem~\ref{th:Euler_min_Lagrangian}]
Since $(m^*(\cdot),u^*_E)$ is a strong local minimizer within the Eulerian approach, there exists  $\varepsilon>0$ such that, for each $(m(\cdot),u_E)\in \operatorname{Adm}_E(m_0)$,
\[J_E(m^*(\cdot),u_E^*)\leq J_E(m(\cdot),u_E)\] provided that $W_p(m^*(t),m(t))\leq\varepsilon$, $t\in [0,T]$.

Let
$(X,{u}_L)\in \operatorname{Adm}_L(m_0)$ satisfy  $W_p(X(t)\sharp \pp,X^*(t)\sharp \pp)\leq \varepsilon$  
for  all $t\in [0,T]$. By Lemma~\ref{lm:Euler2_correspondence}  there exists an Eulerian process $(m(\cdot),u_E)$ that improves $(X,u_L)$.
Furthermore, from~\eqref{eq:L-Emu} it follows $m(0)=X(0)\sharp \pp= X^*(0)\sharp \pp=m_0$, $(m(\cdot),u_E)\in \operatorname{Adm}_E(m_0),$ and $W_p(m(t),m^*(t))\leq \varepsilon$ on $[0,T]$, while $J_E(m(\cdot),u_E)\geq J_E(m^*(\cdot),u^*_E)$. Since 
$(X^*,u^*_L)$ realizes $(m^*(\cdot),u^*_E)$ and $(m(\cdot),u_E)$  improves $(X,u_L)$, we  obtain 
\[J_L(X,u_L)\geq J_E(m(\cdot),u_E)\geq J_E(m^*(\cdot),u^*_E)=J_L(X^*,u^*_L).\]

Thus, $(X^*,u^*_L)$ is a strong local $W_p$-minimizer at $m_0$ in the framework of the Lagrangian approach.
\end{proof}


\subsection{Pontryagin maximum principle for the Eulerian formulation}\label{subsect:Eulerian_PMP}

The formulation of the Pontryagin maximum principle within the Eulerian approach relies on the continuity equation for probabilities defined on $\rd\times\rds$. As above, we consider the solutions in the distributional sense, i.e., if $w$ is a velocity field defined on $[0,T]\times \rd\times\rds$ with values in $\rd\times\rds$, we say that $[0,T]\mapsto \nu(t)\in\mathcal{P}(\rd\times\rds)$ solves the continuity equation 
\[\partial_t\nu+\operatorname{div}(w(t,x,\psi)\nu)=0\] if, for every $\varphi\in C_c^\infty((0,T)\times\rd\times\rds)$,
\[\begin{split}
\int_0^T\int_{\rd\times\rds}\big[\partial_t\varphi(t&,x,\psi)+\nabla_x\varphi(t,x,\psi)w_x(t,x,\psi)\\&+w_\psi(t,x,\psi)\nabla_\psi\varphi(t,x,\psi)\big]\nu(t,d(x,\psi))dt=0.\end{split}\] Here $\nabla_x\varphi$ (respectively, $\nabla_\psi\varphi)$ stands for the derivative of the function $\varphi$ w.r.t. $x$ (respectively, w.r.t. $\psi$), while $w_x(t,x,\psi)\in\rd$ and $w_\psi(t,x,\psi)\in\rds$ are components of the vector filed $w$: $w(t,x,\psi)=(w_x(t,x,\psi),w_\psi(t,x,\psi))$. Recall that $\psi$ is a row-vector. Additionally,  we regard $\nabla_\psi\varphi(t,x,\psi)$ as a column-vector. Therefore, it is convenient to write the inner product of $\nabla_\psi\varphi(t,x,\psi)$ and $w_\psi(t,x,\psi)$ in the form $w_\psi(t,x,\psi)\nabla_\psi\varphi(t,x,\psi)$.

Additionally, $p\wedge q$ stands for the minimal number between  $p$ and its conjugate exponent $q=p/(p-1)$.

\begin{theorem}\label{th:PMP_Eulerian}
Let an Eulerian control process $(m^*(\cdot),u^*_E)\in \operatorname{Adm}_E(m_0)$ be a  strong local minimizer at $m_0$.
Then, there exists a  flow of probabilities $\nu^*(\cdot)\in \operatorname{AC}^{p\wedge q}([0,T];\mathcal{P}^{p\wedge q}(\rd\times\rds))$ satisfying the following conditions:
\begin{itemize}
	\item consistency with $m^*(\cdot)$:
	\begin{equation}\label{eq:b0_E} 
		\operatorname{p}^1\sharp \nu^*(t)=m^*(t)\qquad\forall t\in[0,T];
	\end{equation}
	\item joint state and costate continuity equation: $\nu^*(\cdot)$ is a distributional solution of the  continuity equation	\begin{equation}\label{eq:contE}
		\partial_t \nu^*+ \operatorname{div} (\mathscr{j}(t,x,\psi)\nu^*)=0, 
	\end{equation}   
	where the vector field $\mathscr{j}(t,x,\psi)=(\mathscr{j}_x(t,x,\psi),\mathscr{j}_\psi(t,x,\psi))$ is given by
	\[\begin{split} \mathscr{j}_x(t,x,&\psi)\triangleq f(t,x,m^*(t),u^*_E(t,x)),\\
		\mathscr{j}_\psi(t,x,&\psi)\triangleq -\psi\nabla_xf(t,x,m^*(t),u^*_E(t,x))\\+&\nabla_xf_0(t,x,m^*(t),u^*_E(t,x))\\
		-
		&\int_{\rd\times\rds}\zeta\nabla_m f(t,y,m^*(t),x,u^*_E(t,y))\nu^*(t,d(y,\zeta))\\+
		&\int_{\rd}\nabla_m f_0(t,y,m^*(t),x,u^*_E(t,y))m^*(t,dy);
	\end{split}
	\]
	\item transversality condition:
	\begin{equation}\label{eq:b1_E} 
		\begin{split}
			\operatorname{p}^2\sharp\nu^*(T)=  \bigg[-\nabla_x &\sigma (\cdot,m^*(T))\\&-\int_{\rd}\nabla_m 
			\sigma (y,m^*(T),\cdot) m^*(T,dy)\bigg]\sharp m^*(T);	\end{split}\end{equation} 
	\item maximization condition: for almost every $s\in [0,T]$ and $\nu^*(s)$-a.e. $(x,\psi)\in \rd\times\rds$,  \begin{equation}\label{condition:maximum_E}
		H(s,x,\psi,m^*(s),u^*_E(s,x))=\max_{u\in U}H(s,x,\psi,m^*(s),u)
	\end{equation}
	or, equivalently, for a.e. $s\in [0,T]$,
	\begin{equation}\label{condition:maximum_E_int}
		\begin{split}
			\int_{\rd\times\rds}H(s,x,\psi,m^*(s),&u_E^*(s,x))\nu^*(s,d(x,\psi))\\=	\max_{\upsilon\in L^p(\rd,\mathcal{B}(\rd),m^*(s);U)}&\int_{\rd\times\rds}H(t,x,\psi,m^*(s),\upsilon(x))\nu^*(s,d(x,\psi)).
		\end{split}
	\end{equation}
\end{itemize}
\end{theorem}
\begin{proof}
We choose a probability space $\probspace$ equal to $(\Gamma,\mathcal{B}(\Gamma),\eta^*)$, where $\eta^*$ is such that~\eqref{eq:L-Eu} holds true form $m(\cdot)=m^*(\cdot)$ and $u_E=u_E^*$. The existence of such measure directly follows from~\cite[Theorem 8.2.1]{Ambrosio}.
By Proposition~\ref{prop:Euler_correspondence}, there exists  a Lagrangian process $(X^*,u^*_L)$ defined on the probability space $(\Gamma,\mathcal{B}(\Gamma),\eta^*)$ that realizes $(m^*(\cdot),u^*_E)$. Moreover, $u^*_L(t,\gamma)=u^*_E(t,\gamma(t))$ and $\widehat{X^*}=\operatorname{id}_\Gamma$.
Since $(m^*(\cdot),u^*_E)\in \operatorname{Adm}_E(m_0)$ is a strong local minimizer at $m_0$, by Theorem~\ref{th:Euler_min_Lagrangian}, the process $(X^*,u^*_L)$ is a strong local  $W_p$-minimizer at $m_0$ in the framework of the Lagrangian approach. 
Applying Corollary~\ref{cor:PMP_w_min} to this process and taking into account the equalities $\pp=\eta^*$, $m^*(t)=e_t\sharp \pp$,   $X^*(t,\gamma)=\gamma(t)$, and $u^*_L(t,\gamma)=u^*_E(t,\gamma(t))$, we find
a function $\Psi:[0,T]\times\Gamma\rightarrow\rds$ satisfying $\eta^*$-a.s. the costate equation:
\begin{equation}\label{eq:Psi_LE1}
	\begin{split}
		\frac{d}{dt}\Psi(t,\gamma&)\\=-\Psi&(t,\gamma)\nabla_xf(t,\gamma(t),m^*(t),u^*_E(t,\gamma(t)))\\&+\nabla_xf_0(t,\gamma(t),m^*(t),u^*_E(t,\gamma(t)))\\&-
		\int_\Gamma \Psi(t,\gamma')\nabla_m f(t,\gamma'(t),m^*(t),\gamma(t),u^*_E(t,\gamma'(t)))\eta^*(d\gamma')\\&+
		\int_\Gamma \nabla_m f_0(t,\gamma'(t),m^*(t),\gamma(t),u^*_E(t,\gamma'(t)))\eta^*(d\gamma'),
	\end{split}
\end{equation} 
the transversality condition:
\begin{equation} \label{condition:trans_LE}
	\Psi(T,\gamma)=-\nabla_x\sigma(\gamma(T),m^*(T))- \int_\Gamma\nabla_m\sigma(\gamma'(T),m^*(T),\gamma(T))\eta^*(d\gamma')
\end{equation}   and the maximization of the Hamiltonian condition in the integral form which states that, for a.e. $s\in [0,T]$ and $\eta^*$-a.e. $\gamma\in\Gamma$:
\begin{equation}\label{condition:maximum_LE}\begin{split}
		H(s,\gamma(s),\Psi(s,&\gamma),m^*(s),u^*_E(s,\gamma(s)))\\&=\max_{u\in U}H(s,\gamma(s),\Psi(s,\gamma),m^*(s),u).\end{split}
\end{equation} Additionally, $\widehat{\Psi}\in L^q(\Gamma,\mathcal{B}(\Gamma),\pp;\Gamma^\star)$. Here, as above,  $\widehat{\Psi}$ denotes the mapping  assigning to $\gamma\in\Gamma$ the whole path $\Psi(\cdot,\gamma)\in\Gamma^\star$. Obviously, $\widehat{X^*}=\operatorname{id}_\Gamma$ lies in $L^p(\Gamma,\mathcal{B}(\Gamma),\eta^*;\Gamma)$. Therefore,
the mapping $(\widehat{X^*},\widehat{\Psi})$ lies in $L^{p\wedge q}(\Gamma,\mathcal{B}(\Gamma),\eta^*; \Gamma\times\Gamma^\star)$. Thus,  the measure $\chi^*\in\mathcal{P}(\Gamma\times\Gamma^\star)$ defined by the rule $\chi^*=(\widehat{X^*},\widehat{\Psi})\sharp\pp$ is an element of $\mathcal{P}^{p\wedge q}(\Gamma\times\Gamma^\star)$.  In particular, we have that $\operatorname{p}^1\sharp\chi^*=\pp=\eta^*$ and
$(\operatorname{p}^1\circ e_t)\sharp\chi^*=m^*(t)$.

We claim that the measure $\chi^*$ is supported on $ \operatorname{AC}^{p\wedge q}([0,T];\rd\times\rds)$. Indeed, the probability $\operatorname{p}^1\sharp\chi^*=\eta^*$ is concentrated on  $\operatorname{AC}^{p}([0,T];\rd)\subset \operatorname{AC}^{p\wedge q}([0,T];\rd)$. Due to the fact that $\Psi(\cdot)$ satisfies the costate equation~\eqref{eq:Psi_LE1} $\eta^*$-a.s., while $u_E\in \eup{m^*(\cdot)}$, using assumption~\ref{assumption:derivative_f_0}, we conclude that $\Psi(\cdot,\gamma)\in \operatorname{AC}^{q}([0,T];\rds)\subset \operatorname{AC}^{p\wedge q}([0,T];\rds)$ for $\eta^*$-a.e. $\gamma\in\Gamma$.

Now let us consider the continuity equation  
\begin{equation}\label{eq:continuity_nu}\partial_t \nu(t)+\operatorname{div}(w(t,x,\psi)\nu(t))=0\end{equation}
with the  vector field $w(t,x,\psi)=(w_x(t,x,\psi),w_\psi(t,x,\psi))$, where
\[\begin{split}
	w_x(t,x,\psi)=f(t,&x,m^*(t),u^*_E(t,x)),\\
	w_\psi(t,x,\psi)=-\psi&\nabla_xf(t,x,m^*(t),u^*_E(t,x))+\nabla_xf_0(t,x,m^*(t),u^*_E(t,x))\\&-
	\int_\Gamma \beta'(t)\nabla_m f(t,\gamma'(t),m^*(t),x,u^*_E(t,\gamma'(t)))\chi^*(d(\gamma',\beta'))\\&+
	\int_\Gamma \nabla_m f_0(t,\gamma'(t),m^*(t),x,u^*_E(t,\gamma'(t)))\chi^*(d(\gamma',\beta')).
\end{split}
\]
We claim that the flow of probabilities $\nu^*(\cdot)$ defined by the rule $\nu^*(t)\triangleq e_t\sharp\chi^*$ is a distributional solution of \eqref{eq:continuity_nu}.  To show this one can use direct computations, the  equality $\chi^*=(\widehat{X^*},\widehat{\Psi})\sharp\eta_*=(\operatorname{id}_\Gamma,\widehat{\Psi})\sharp\eta_*$ and the facts that $(X^*,u^*_L)$ is a Lagrangian process defined on $(\Gamma,\mathcal{B}(\Gamma),\eta^*)$, while $\widehat{\Psi}$  assigns  to each $\gamma\in\Gamma$ the curve $\Psi(\cdot,\gamma)$ satisfying \eqref{eq:Psi_LE1},~\eqref{condition:trans_LE}. 

Furthermore, we have that  $\nu^*(\cdot)$ satisfies~\eqref{eq:b0_E}. Indeed, we already proved that $\nu^*(\cdot)$ is a solution of~\eqref{eq:continuity_nu}. Simultaneously,   $w_x(t,x,\psi)=\mathscr{j}_x(t,x,\psi)$, while $w_\psi(t,x,\psi)=\mathscr{j}_\psi(t,x,\psi)$. The latter is due to equalities $\nu(t)=e_t\sharp\eta^*$,  $\operatorname{p}^1\sharp\nu^*(t)=m^*(t)=e_t\sharp\eta^*$.

The very definition of the flow of probabilities $\nu^*(\cdot)$ and~\eqref{condition:trans_LE} give the fact that $\nu^*$ satisfies the transversality condition in the Eulerian form~\eqref{eq:b1_E}. 

Now, let us prove maximization condition in the local form~\eqref{condition:maximum_E}. First, recall that ~\eqref{condition:maximum_LE} holds true for a.e. $s\in[0,T]$ and $\eta^*$-a.e. $\gamma\in\Gamma$. We fix $s\in [0,T]$ that satisfies this property. Now we consider a Borel set  $\Xi^0(s)\subset \rd\times\rds$ such that~\eqref{condition:maximum_E} is violated whenever $(x,\psi)\in\Xi^0(s)$. Since $\nu^*(s)=e_s\sharp (\operatorname{id}_\Gamma,\widehat{\Psi})\sharp \eta^*$, we have that 
\[\nu^*(s,\Xi^0(s))=\eta^*(\Xi^1(s)),\] where $\Xi^1(s)\in\mathcal{B}(\Gamma)$ contains all curves $\gamma\in\Gamma$ such that $(\gamma(s),\Psi(s,\gamma))\in\Xi^0$. By construction,~\eqref{condition:maximum_LE} is violated for $\gamma\in \Xi^1(s)$. Thus, since we chose $s$ such that~\eqref{condition:maximum_LE} holds true $\eta^*$-a.s.,  $\nu^*(s,\Xi^0(s))=0$.

Finally, the equivalence between~\eqref{condition:maximum_E} and~\eqref{condition:maximum_E_int} is proved in the same way as the equivalence of~\eqref{condition:maximum_integral} and~\eqref{condition:maximum_local} in Theorem~\ref{th:PMP_Lagrangian}.
\end{proof}	

\begin{remark}\label{rmk:Euler_Hamiltinian_j}
Let us  express the vector field $(\mathscr{j}_x,\mathscr{j}_\psi)$ as a Hamiltonian flow.   Indeed, we put
\[\mathscr{H}(t,\nu,u)\triangleq \int_{\rd\times\rds}H(t,x,\psi,\operatorname{p}^1\sharp\nu,u)\nu(d(x,\psi)).\] Using Proposition~\ref{prop:computation:int_phi}, we arrive at the equality
\begin{equation}\label{equality:Ham_int_totla_derivative}
	\begin{split}
		\nabla_\nu\mathscr{H}(t,\nu,x&,\psi,u)\\=(\nabla_x &H(t,x,\psi,\operatorname{p}^1\sharp\nu,u),\nabla_\psi H(t,x,\psi,\operatorname{p}^1\sharp\nu,u))\\&+\Bigl(\int_{\rd\times\rds} \nabla_m H(t,y,\zeta,\operatorname{p}^1\sharp\nu,x,u)\nu(d(y,\zeta)),0\Bigr),\end{split}
\end{equation} where
\begin{equation}\label{equality:Ham_int_x_psi_derivative}
	\begin{split}
		\nabla_x H(t,x,\psi,\operatorname{p}^1\sharp\nu,u)&= \psi\nabla_xf(t,x,\operatorname{p}^1\sharp\nu,u)-\nabla_xf_0(t,x,\operatorname{p}^1\sharp\nu,u),\\ 
		\nabla_\psi H(t,x,\psi,\operatorname{p}^1\sharp\nu,u)&=f(t,x,\operatorname{p}^1\sharp\nu,u),
	\end{split}
\end{equation}  while \begin{equation}\label{equality:Ham_int_nu_derivative}
	\begin{split}
		\nabla_m H(t,y,\zeta,&\operatorname{p}^1\sharp\nu,x,u)\\&=\zeta\nabla_m f(t,y,\operatorname{p}^1\sharp\nu,x,u)-\nabla_m f_0(t,y,\operatorname{p}^1\sharp\nu,x,u)\in \rds.\end{split}\end{equation} Furthermore, let $\mathbb{J}:\rds\times\rd\rightarrow\rd\times\rds$ be the linear function  defined by the rule: 
\[\mathbb{J}(\zeta,y)\triangleq (y,-\zeta).\] One can regard $\mathbb{J}$ as a unit symplectic matrix. Comparing the formulae for $\mathscr{j}_x$ and $\mathscr{j}_\psi$ with~\eqref{equality:Ham_int_totla_derivative}--\eqref{equality:Ham_int_nu_derivative}, we conclude that
\[\mathscr{j}(t,x,\psi)=\mathbb{J}\nabla_\nu\mathscr{H}(t,x,\psi,\nu(t),u)\text{ for }u=u^*_E(t,x).\]
\end{remark}


\section{Mean field type linear-quadratic regulator}\label{sect:MFT_LQR}
In this section, we come back to the Lagrangian approach and consider the model problem of linear-quadratic regulator with the additional terms describing the variance of the distribution of agents. We put $p=2$. Moreover, we fix a standard probability space $(\Omega,\mathcal{F},\pp)$ and an initial assignment $X_0\in L^2(\Omega,\mathcal{F},\pp;\rd)$. The dynamics of each agent is given by the equation
\begin{equation}\label{example:system}
\frac{d}{dt}X(t,\omega)=A(t)X(t,\omega)+B(t)u(t,\omega),
\end{equation} while the payoff functional is equal to
\begin{equation}\label{example:payoff}
\begin{split}
	\frac{1}{2}\expect\bigg(\int_0^T [X^T(t)&Q_x(t)X(t)+u^T(t)R(t)u(t)]dt+X^T(T)K_x(t)X(T)\bigg)\\+\frac{1}{2}\int_0^T&\expect[(X(t)-\expect X(t))^TQ_m(t) (X(t)-\expect X(t))]dt\\&+\frac{1}{2}\expect[(X(T)-\expect X(T))^TK_m ( X(T)-\expect X(T))].
\end{split}
\end{equation} Here $X(t,\omega)\in\rd$, $U=\mathbb{R}^{d'}$, $A(t),Q_x(t)$, $Q_m(t)$, $K_x$, $K_m$ are $(d\times d)$-matrices, $d'$ is natural, $B(t)$ is a $(d\times d')$-matrix, $R(t)\in\mathbb{R}^{d'\times d'}$. Additionally, the matrices $Q_x(t)$, $Q_m(t)$, $K_x$, $K_m$ and $R(t)$ are symmetric, while $R(t)>0$. Finally, we assume that the matrix-valued functions $A(\cdot)$, $B(\cdot)$, $R(\cdot)$, $Q_x(\cdot)$ and $Q_m(\cdot)$ are continuous on $[0,T]$. 

A problem of such class can be regarded as a deterministic variant of  social optimization problem studied in \cite{Social_choice}.

Notice that the first  term in~\eqref{example:payoff} refers to the individual controls of the agents. The second term is the integrated variance of the random variable $D(t)X(t)$, where $Q_m(t)=D^T(t)D(t)$. Finally, the third term is equal to the variance of the random variable $\Theta X(T)$ with $\Theta^T\Theta=K_m$. The last two terms evaluates the cooperative behavior of the agents. Since, for each symmetric matrix $\mathcal{Q}$ and every random variable $\xi$ with values in $\rd$, $\expect[(\xi-\expect\xi)^T\mathcal{Q}(\xi-\expect\xi)]=\expect(\xi^T\mathcal{Q}\xi)-(\expect\xi^T)Q(\expect\xi)$, we may set
\[\begin{split}
f_0(t,x,m,u)\triangleq \frac{1}{2}\Bigg[x^TQ_x&(t)x+u^TR(t)u+x^TQ_m(t)x\\&- \bigg(\int_{\rd}y^Tm(dy)\bigg)Q_m(t)\bigg(\int_{\rd}ym(dy)\bigg) \Bigg],\end{split}\]
\[\sigma(x,m)\triangleq \frac{1}{2}\Bigg[x^TK_xx+x^TK_mx-\bigg(\int_{\rd}y^Tm(dy)\bigg)K_m\bigg(\int_{\rd}ym(dy)\bigg)\Bigg].\] 

Below, to simplify notation, given a random variable $\xi$, we denote
\[\overline{\xi}\triangleq \mathbb{E}\xi.\]

\begin{theorem}\label{th:LQR} If $(X^*,u^*)$ is a Pontryagin local $L^2$-minimizer at some initial assignment $X_0$ for problem~(\ref{example:system}),~(\ref{example:payoff}) with an initial assignment $X_0$.  Then, 
\begin{equation}\label{example:u_optimal_representation}
	u^*(t,\omega)= -R^{-1}(t)B^T(t)\big[P_1(t)(X^*(t,\omega)-\overline{X}^*(t))+P_2(t)\overline{X}^*(t)\big],
\end{equation} where $P_1(\cdot)$ is the matrix-valued function solving the Ricatti differential equation 
\begin{equation}\label{example:Ricatti_1}
	\begin{split}
		\frac{d}{dt}P_1(t)=-P_1(t)A&(t)-A^T(t)P_1(t)\\&+P_1(t)B(t)R^{-1}(t)B(t)P_1(t)-(Q_x(t)+Q_m(t))\end{split}
\end{equation} with the boundary condition
\begin{equation}\label{example:boundary_ricatti_1}
	P_1(T)=K_x+K_m,
\end{equation}
while $P_2(\cdot)$ satisfies the Ricatti differential equation
\begin{equation}\label{example:Ricatti_2}
	\begin{split}
		\frac{d}{dt}P_2(t)=-P_2(t)A&(t)-A^T(t)P_2(t)\\&+P_2(t)B(t)R^{-1}(t)B(t)P_2(t)-Q_x(t)\end{split}\end{equation} and  the boundary condition
\begin{equation}\label{example:boundary_ricatti_2}
	P_2(T)=K_x.
\end{equation}
\end{theorem}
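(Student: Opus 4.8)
The plan is to specialize the Pontryagin maximum principle of Theorem~\ref{th:PMP_Lagrangian} to the data~\eqref{example:system},~\eqref{example:payoff} and then to decouple the resulting linear forward--backward system by a Riccati ansatz. First I would compute the derivatives entering~\eqref{eq:Psi}--\eqref{eq:transversality}. For the affine dynamics one has $\nabla_x f(t,x,m,u)=A(t)$ and $\nabla_m f\equiv 0$, so the entire $\nabla_m f$-term in the costate equation drops out. For the running cost, $\nabla_x f_0(t,x,m,u)=x^{T}(Q_x(t)+Q_m(t))$, while the only $m$-dependent piece of $f_0$ is $-\tfrac12\,\bar m^{T}Q_m(t)\bar m$ with $\bar m\triangleq\int_{\rd}y\,m(dy)$; computing the flat and intrinsic derivatives (via Proposition~\ref{prop:computation:int_phi}) gives $\nabla_m f_0=-\bar m^{T}Q_m(t)$, which is independent of the extra variable $y$. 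Likewise $\nabla_x\sigma=x^{T}(K_x+K_m)$ and $\nabla_m\sigma=-\bar m^{T}K_m$. Substituting these and using $\bar m=\overline{X}^*(t)=\expect X^*(t)$, the costate equation and transversality condition collapse to the mean-coupled linear system
\[
\frac{d}{dt}\Psi(t,\omega)=-\Psi(t,\omega)A(t)+X^*(t,\omega)^{T}(Q_x(t)+Q_m(t))-\overline{X}^*(t)^{T}Q_m(t),
\]
with $\Psi(T,\omega)=-X^*(T,\omega)^{T}(K_x+K_m)+\overline{X}^*(T)^{T}K_m$.

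Next I would resolve the maximization condition~\eqref{condition:maximum_local}. Since $U=\mathbb{R}^{d'}$ is unconstrained and $R(t)>0$, the Hamiltonian $H=\Psi(A(t)x+B(t)u)-f_0$ is strictly concave in $u$, so its maximizer is the unique stationary point $\nabla_u H=0$, i.e.\ $\Psi B(t)=u^{T}R(t)$, whence $u^*(t,\omega)=R^{-1}(t)B^{T}(t)\Psi(t,\omega)^{T}$. Setting $p\triangleq-\Psi^{T}$ (a column vector) rewrites this as $u^*=-R^{-1}B^{T}p$, already of the form~\eqref{example:u_optimal_representation}; transposing the costate equation gives $\dot p=-A^{T}p-(Q_x+Q_m)X^*+Q_m\overline{X}^*$ with $p(T)=(K_x+K_m)X^*(T)-K_m\overline{X}^*(T)$, while the state equation reads $\dot X^*=A X^*-BR^{-1}B^{T}p$.

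The core of the argument is to decouple this linear forward--backward system, which is coupled across $\omega$ only through $\overline{X}^*$. I would split each quantity into its mean and its deviation $\widetilde X\triangleq X^*-\overline{X}^*$, and posit $p(t,\omega)=P_1(t)\widetilde X(t,\omega)+P_2(t)\overline{X}^*(t)$. Taking expectations yields $\expect p=P_2\overline{X}^*$, so $\dot{\overline{X}}^*=(A-BR^{-1}B^{T}P_2)\overline{X}^*$ and $\dot{\widetilde X}=(A-BR^{-1}B^{T}P_1)\widetilde X$. Differentiating the ansatz, substituting these two flows, and comparing with the transposed costate equation $\dot p=-A^{T}p-(Q_x+Q_m)(\widetilde X+\overline{X}^*)+Q_m\overline{X}^*$, I would match the coefficients of the independent quantities $\widetilde X$ and $\overline{X}^*$. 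The $\widetilde X$-coefficients reproduce exactly~\eqref{example:Ricatti_1} and the $\overline{X}^*$-coefficients reproduce~\eqref{example:Ricatti_2}, while the terminal identity $p(T)=(K_x+K_m)\widetilde X(T)+K_x\overline{X}^*(T)$ forces the boundary conditions~\eqref{example:boundary_ricatti_1},~\eqref{example:boundary_ricatti_2}.

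The main obstacle is the rigorous justification of the decoupling rather than the algebra, and two points need care. First, one must know that~\eqref{example:Ricatti_1},~\eqref{example:Ricatti_2} admit solutions on the whole interval $[0,T]$; since the cost matrices are assumed only symmetric (not positive semidefinite), global solvability of the Riccati equations is not automatic. I would either incorporate it as part of the well-posedness underlying the existence of the assumed minimizer, or argue that the linear forward--backward system furnished by the PMP, having the minimizer's costate as a genuine solution, forces the decoupling field to be well defined on $[0,T]$. Second, having built a candidate $p$ from the Riccati solutions, I would close the argument by uniqueness: the pair constructed from the ansatz solves the same linear costate system~\eqref{eq:Psi} as the $\Psi$ supplied by Theorem~\ref{th:PMP_Lagrangian}, and Proposition~\ref{B:prop:existence_uniqueness} guarantees this solution is unique, so the two coincide and~\eqref{example:u_optimal_representation} follows. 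Verifying that $P_1,P_2$ stay symmetric and that $\Psi$ has the required $L^q$-integrability are then the remaining routine checks.
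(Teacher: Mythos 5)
Your overall route coincides with the paper's: specialize Theorem~\ref{th:PMP_Lagrangian} to the linear--quadratic data, compute $\nabla_x f=A$, $\nabla_m f=0$, $\nabla_x f_0=x^T(Q_x+Q_m)$, $\nabla_m f_0=-\bar m^TQ_m$, $\nabla_m\sigma=-\bar m^TK_m$ (one small slip: the relevant computation rule is Proposition~\ref{prop:computation:phi_average}, the derivative of a function of the mean, not Proposition~\ref{prop:computation:int_phi}), resolve the maximization condition using $R(t)>0$ and $U=\mathbb{R}^{d'}$, then take expectations of the coupled state--costate system and subtract to obtain two decoupled linear two-point boundary value problems for the mean and for the deviation, each handled by a Riccati matrix. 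The paper disposes of the final decoupling by citing standard LQR theory (\cite[\S 6.1.1, 6.1.2]{concise_control}), which also silently subsumes global solvability of~\eqref{example:Ricatti_1},~\eqref{example:Ricatti_2} on $[0,T]$ --- a caveat you rightly flag and which the paper shares (only $Q_m=D^TD$ and $K_m=\Theta^T\Theta$ are positive semidefinite by construction; $Q_x$, $K_x$ are merely symmetric).

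There is, however, one step that fails as literally written: your closure by uniqueness. You claim that the candidate $\hat p(t,\omega)=P_1(t)\big(X^*(t,\omega)-\overline{X}^*(t)\big)+P_2(t)\overline{X}^*(t)$ ``solves the same linear costate system~\eqref{eq:Psi}'' as the true costate, so that Proposition~\ref{B:prop:existence_uniqueness} identifies them. But differentiating $\hat p$ along the optimal flow requires the true dynamics $\dot X^*=AX^*-BR^{-1}B^Tp$, which contain the \emph{true} costate $p$, not the ansatz; carrying out the computation (with $\Delta\triangleq\hat p-p$) one finds
\begin{equation*}
\frac{d}{dt}\Delta=-A^T\Delta+P_1BR^{-1}B^T\big(\Delta-\expect\Delta\big)+P_2BR^{-1}B^T\,\expect\Delta,\qquad \Delta(T)=0,
\end{equation*}
so $\hat p$ satisfies~\eqref{eq:Psi} only up to terms proportional to the very difference you are trying to kill --- the argument is circular. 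The repair is the classical sweep argument, which is exactly what the paper's citation encodes: since the above equation for $\Delta$ is linear and homogeneous with zero terminal data, first take expectations to get $\expect\Delta\equiv 0$ by backward uniqueness, and then the remaining equation $\dot\Delta=(-A^T+P_1BR^{-1}B^T)\Delta$, $\Delta(T)=0$, forces $\Delta\equiv 0$. Equivalently, apply the difference argument separately to the mean system and to the deviation system, as the paper implicitly does. With this one-paragraph fix (plus the symmetry and $L^q$-integrability checks you already mention), your proof is complete and matches the paper's.
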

\begin{proof}
We will use Theorem~\ref{th:PMP_Lagrangian} to determine the optimal control. Notice that  the Hamiltonian $H(t,x,\psi,m,u)$ for problem~(\ref{example:system}),~(\ref{example:payoff}) is equal to
\[\begin{split}
	H(t,x,\psi,m,u)\triangleq \psi A(t)x+&\psi B(t)u\\-\frac{1}{2}\bigg[ &x^TQ_x(t)x+u^TR(t)u+x^TQ_m(t)x\\&-\bigg(\int_{\rd}y^Tm(dy)\bigg)Q_m(t)\bigg(\int_{\rd}ym(dy)\bigg)\bigg].
\end{split}
\] 

Below, to use the matrix notation, we work with the vector $\Upsilon(t)=\Psi^T(t)$. 

The maximization condition implies that 
\[R(t)u^*(t,\omega)=B^T(t)\Upsilon(t,\omega).\] Since $R(t)>0$, we have that
\begin{equation}\label{example:u_optimal_Upsilon}
	u^*(t,\omega)=R^{-1}(t)B^T(t)\Upsilon(t,\omega).
\end{equation}
Plugging this control to equation~(\ref{example:system}), we obtain 
\begin{equation}\label{example:system_optimal_psi}
	\frac{d}{dt}X^*(t,\omega)=A(t)X^*(t,\omega)+B(t)R^{-1}(t)B^T(t)\Upsilon(t,\omega).
\end{equation} Recall that $X^*$ satisfies the initial condition
\begin{equation}\label{example:X_initial}
	X^*(0,\omega)=X_0(\omega).
\end{equation}

Using the formula for the derivative of the function depending on mean (see Proposition~\ref{prop:computation:phi_average}), we conclude that the transposed costate variable $\Upsilon(\cdot,\omega)$ satisfies the equation
\begin{equation}\label{example:costate_eq_first}
	\frac{d}{dt}\Upsilon(t,\omega)= (Q_x(t)+Q_m(t))X^*(t,\omega)-A^T(t)\Upsilon(t,\omega)-Q_m(t)\overline{X}^*(t)
\end{equation} and the boundary condition
\begin{equation}\label{example:costate_boundary_first}
	\Upsilon(T,\omega)=-(K_x+K_m)X^*(T,\omega)+K_m\overline{X}^*(T).
\end{equation}
For each $\omega\in\Omega$, system~\eqref{example:system_optimal_psi},~\eqref{example:costate_eq_first} is a nonhomogeneous system of linear equations. To analyze it, we   take expectation in equations~(\ref{example:system_optimal_psi}),~(\ref{example:costate_eq_first}) and in boundary conditions~(\ref{example:X_initial}),~(\ref{example:costate_boundary_first}). This leads to the following system on $\overline{X}^*$ and $\overline{\Upsilon}$:
\begin{equation}\label{example:system_averaged_X}
	\frac{d}{dt}\overline{X}^*(t)=A(t)\overline{X}^*(t)+B(t)R^{-1}(t)B(t)\overline{\Upsilon}(t), \end{equation}
\begin{equation}\label{example:system_averaged_Psi}\frac{d}{dt}\overline{\Upsilon}(t)= Q_x(t)\overline{X}^*(t)-A^T(t)\overline{\Upsilon}(t) 
\end{equation} equipped with the boundary conditions
\begin{equation}\label{example:boundary_averaged}
	\overline{X}^*(0)=\overline{X_0}, \ \ \overline{\Upsilon}(T)=-K_x\overline{X}^*(T).
\end{equation}
Subtracting~\eqref{example:system_averaged_X} from~\eqref{example:system_optimal_psi} and~\eqref{example:system_averaged_Psi} from~\eqref{example:costate_eq_first}, we obtain that the differences $X^*(t,\omega)-\overline{X}^*(t)$ and $\Upsilon(t,\omega)-\overline{\Upsilon}(t)$ satisfies the following system of ODEs
\begin{equation}\label{example:diff_system}
	\begin{split}
		&\frac{d}{dt}[X^*(t,\omega)-\overline{X}^*(t)]=A(t)[X^*(t,\omega)-\overline{X}^*(t)]\\&{}\hspace{150pt}+B(t)R^{-1}(t)B^T(t)[\Upsilon(t,\omega)-\overline{\Upsilon}(t)],\\
		&\frac{d}{dt}[\Upsilon(t,\omega)-\overline{\Upsilon}(t)]= (Q_x(t)+Q_m(t))[X^*(t,\omega)-\overline{X}^*(t)]\\&{}\hspace{205pt}-A^T(t)[\Upsilon(t,\omega)-\overline{\Upsilon}(t)].
	\end{split} 
\end{equation} Furthermore,
\begin{equation}\label{example:diff_conditions}
	\begin{split}
		&X^*(0,\omega)-\overline{X}^*(0)=X_0(\omega)-\overline{X}_0,\\ &\Upsilon(T,\omega)-\overline{\Upsilon}(T)=-(K_x+K_m)[X^*(T,\omega)-\overline{X}^*(T)].\end{split}
\end{equation}
From the  theory of a finite dimensional LQ regulator (see \cite[\S 6.1.1, 6.1.2]{concise_control}), we have that 
\begin{equation}\label{example:diff_representation}
	\Upsilon(t,\omega)-\overline{\Upsilon}(t)=-P_1(t)[X^*(t,\omega)-\overline{X}^*(t)],
\end{equation} where $P_1(\cdot)$ satisfies~(\ref{example:Ricatti_1}) and~(\ref{example:boundary_ricatti_1}).

Additionally,~\eqref{example:system_averaged_X}--\eqref{example:boundary_averaged} and results of  \cite[\S 6.1.1, 6.1.2]{concise_control}  yield that 
\[\overline{\Upsilon}(t)=-P_2(t)\overline{X}^*(t),\] where $P_2(\cdot)$ satisfies~(\ref{example:Ricatti_2}),~\eqref{example:boundary_ricatti_2}. 
Plugging $\overline{\Upsilon}(t)$ into~\eqref{example:diff_representation}, we conclude that
\[\Upsilon(t,\omega)=-P_1(t)[X^*(t,\omega)-\overline{X}^*(t)]-P_2(t)\overline{X}^*(t).\] This and~\eqref{example:u_optimal_Upsilon} imply~\eqref{example:u_optimal_representation}.
\end{proof}

\begin{remark}
The strategy described by  synthesis~\eqref{example:u_optimal_representation} looks as  a solution of this mean field optimal control problem. To check this directly, one should  analyze the Bellman equation in the Wasserstein space. This problem lies beyond the scope of the paper.\end{remark}

\begin{acknowledgement}
	We would like to thank  anonymous referees for their valuable and helpful comments.
\end{acknowledgement}

\appendix

\section{Some properties of intrinsic derivative}\label{appendix:derivative}
\renewcommand{\thetheorem}{A.\arabic{theorem}}
\begin{proposition}\label{prop:lipschitz} 
	Assume that $\Phi:\prd\rightarrow \mathbb{R}$ has a intrinsic derivative that is continuous and bounded by a constant $\widehat{C}$. Then $\Phi$ is Lipschitz continuous with the constant equal to $\widehat{C}$.
\end{proposition}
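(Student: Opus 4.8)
The plan is to reduce the Wasserstein Lipschitz bound to the pointwise bound on the intrinsic derivative by transporting the increment $m'-m$ through an arbitrary coupling and applying the fundamental theorem of calculus in the spatial variable. First I would fix $m,m'\in\prd$ together with a coupling $\pi\in\Pi(m,m')$ of finite $p$-th cost (such couplings exist because $m,m'\in\prd$, e.g. the product coupling), and introduce the linear interpolation $m_s\triangleq(1-s)m+sm'$, which lies in $\prd$ for every $s\in[0,1]$ since the $p$-th moment is convex along this segment. By the integral representation of $\Phi(m')-\Phi(m)$ recorded after Definition~\ref{def:var_derivatve}, it suffices to control the inner integral $\int_{\rd}\frac{\delta\Phi}{\delta m}(m_s,y)[m'(dy)-m(dy)]$ uniformly in $s$.

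Next I would rewrite this inner integral using the marginals $\operatorname{p}^1\sharp\pi=m$ and $\operatorname{p}^2\sharp\pi=m'$, as
\[\int_{\rd}\frac{\delta\Phi}{\delta m}(m_s,y)[m'(dy)-m(dy)]=\int_{\rd\times\rd}\Bigl[\tfrac{\delta\Phi}{\delta m}(m_s,y)-\tfrac{\delta\Phi}{\delta m}(m_s,x)\Bigr]\pi(d(x,y)).\]
Since $y\mapsto\frac{\delta\Phi}{\delta m}(m_s,y)$ is $C^1$ with gradient $\nabla_m\Phi(m_s,\cdot)$ (Definition~\ref{def:L_derivative}), the fundamental theorem of calculus gives
\[\tfrac{\delta\Phi}{\delta m}(m_s,y)-\tfrac{\delta\Phi}{\delta m}(m_s,x)=\int_0^1\nabla_m\Phi\bigl(m_s,x+r(y-x)\bigr)(y-x)\,dr,\]
whence, using $\|\nabla_m\Phi\|\le\widehat{C}$ together with the Cauchy--Schwarz inequality, the inner integral is bounded in absolute value by $\widehat{C}\int_{\rd\times\rd}\|y-x\|\,\pi(d(x,y))$, uniformly in $s$.

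Integrating over $s\in[0,1]$ and invoking Fubini's theorem (legitimate because the full integrand is dominated by the integrable profile $\widehat{C}\|y-x\|$) would yield $|\Phi(m')-\Phi(m)|\le\widehat{C}\int_{\rd\times\rd}\|y-x\|\,\pi(d(x,y))$. Applying the Hölder (equivalently Jensen) inequality with exponents $p$ and $q$ against the probability measure $\pi$ bounds the right-hand side by $\widehat{C}\bigl(\int_{\rd\times\rd}\|y-x\|^p\pi(d(x,y))\bigr)^{1/p}$, and taking the infimum over all couplings $\pi\in\Pi(m,m')$ produces exactly $\widehat{C}\,W_p(m,m')$. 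The only genuinely delicate point is the joint measurability in $(s,r,x,y)$ needed to justify Fubini and the fundamental theorem of calculus step; the continuity of $\frac{\delta\Phi}{\delta m}$ and of $\nabla_m\Phi$ assumed in Definitions~\ref{def:var_derivatve} and~\ref{def:L_derivative}, together with the uniform bound $\widehat{C}$, make every application of the dominated convergence / Fubini machinery routine.
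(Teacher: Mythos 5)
Your proof is correct and follows essentially the same route as the paper: both arguments combine the integral representation of $\Phi(m')-\Phi(m)$ via the flat derivative, a transport of the increment through a coupling, a one-dimensional calculus step along segments to bring in $\nabla_m\Phi$, and a Jensen/H\"older estimate to reach $\widehat{C}\,W_p(m',m)$. The only (immaterial) differences are that the paper fixes an optimal plan at the outset rather than taking an infimum over arbitrary couplings at the end, and uses the mean value theorem where you use the integral form of the fundamental theorem of calculus.
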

\begin{proof}
	Let $m,m'\in\prd$, and let $\pi_0\in \Pi(m',m)$ be an optimal plan between $m$ and $m'$ for the cost function equal to $\|x-y\|^p$. The existence of the optimal plan is due to \cite[Theorem 4.1]{Villani}.
	We have that 
	\[\begin{split}
		\Phi(m')-\Phi(m)=\int_0^1\int_{\rd\times\rd}\Bigg[\frac{\delta\Phi}{\delta m}((&1-s)m+sm',y')\\-&\frac{\delta\Phi}{\delta m}((1-s)m+sm',y)\Bigg]\pi_0(d(y',y))ds.\end{split}\] Furthermore, notice that
	\[\begin{split}\bigg[\frac{\delta\Phi}{\delta m}((1-s)m+sm',y')&-\frac{\delta\Phi}{\delta m}((1-s)m+sm',y)\bigg]\\=&\nabla_m\Phi((1-s)m+sm',y+\tilde{r}(y'-y))\cdot (y'-y),\end{split}\] where $\tilde{r}\in (0,1)$ depends on $y'$ and $y$. By assumption  $\nabla_m\Phi$  is bounded by some constant $\widehat{C}$. Since $\pi$ is an optimal plan between $m'$ and $m$, using the Jensen's inequality when $p>1$, we obtain
	\[\begin{split}
		\Phi(m')-\Phi(m)&\leq \int_{\rd\times\rd}\widehat{C}\|y'-y\|\pi_0(d(y',y))\\&\leq \widehat{C} \left[\int_{\rd\times\rd}\|y'-y\|^p\pi_0(d(y',y))\right]^{1/p}=\widehat{C}W_p(m',m).\end{split}\] 
	Interchanging the measures $m$ and $m'$, we derive the Lipschitz continuity of the function $\Phi$.
\end{proof}

Now, let us compute the intrinsic derivative for a function depending on the first moment of a probability measure. 
\begin{proposition}\label{prop:computation:phi_average}
	Assume that 
	\begin{itemize} 
		\item the function $\phi_1:\rd\rightarrow\mathbb{R}$ is differentiable, 
		\item $\Phi_1(m)\triangleq \phi_1\bigg(\int_{\rd}zm(dz)\bigg).$\end{itemize}  Then, 
	\[
	\nabla_m\Phi_1(m,y)=\nabla_x\phi_1\bigg(\int_{\rd}zm(dz)\bigg).
	\] 
\end{proposition}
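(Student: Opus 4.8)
The plan is to apply Definitions~\ref{def:var_derivatve} and~\ref{def:L_derivative} directly, first computing the flat derivative $\frac{\delta\Phi_1}{\delta m}$ and then differentiating it in the spatial variable. The key structural fact is that the barycenter map $m\mapsto\overline{m}\triangleq\int_{\rd}z\,m(dz)$ is affine in $m$; since every $m\in\prd$ has finite first moment (as $p>1$), this map is well defined and finite.

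First I would fix $m,m'\in\prd$ and set $m_s\triangleq(1-s)m+sm'$. Affineness of the barycenter gives $\overline{m_s}=(1-s)\overline{m}+s\overline{m'}=\overline{m}+s(\overline{m'}-\overline{m})$, whence
\[\frac{\Phi_1(m_s)-\Phi_1(m)}{s}=\frac{\phi_1\big(\overline{m}+s(\overline{m'}-\overline{m})\big)-\phi_1(\overline{m})}{s}.\]
Letting $s\downarrow 0$ and using differentiability of $\phi_1$ at $\overline{m}$, the right-hand side converges to the directional derivative $\nabla\phi_1(\overline{m})(\overline{m'}-\overline{m})$. Writing $\overline{m'}-\overline{m}=\int_{\rd}y\,[m'(dy)-m(dy)]$ and moving the constant row vector $\nabla\phi_1(\overline{m})$ inside the integral yields
\[\lim_{s\downarrow 0}\frac{\Phi_1(m_s)-\Phi_1(m)}{s}=\int_{\rd}\nabla\phi_1(\overline{m})\,y\,[m'(dy)-m(dy)].\]

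Comparing with Definition~\ref{def:var_derivatve}, this identifies $\frac{\delta\Phi_1}{\delta m}(m,y)=\nabla\phi_1(\overline{m})\,y$ up to an additive constant in $y$; the normalization $\int_{\rd}\frac{\delta\Phi_1}{\delta m}(m,y)m(dy)=0$ from \cite{Master_cdll} is then enforced by subtracting $\nabla\phi_1(\overline{m})\,\overline{m}$, giving $\frac{\delta\Phi_1}{\delta m}(m,y)=\nabla\phi_1(\overline{m})(y-\overline{m})$. Continuity of this function in $(m,y)$ follows from continuity of $\nabla\phi_1$ together with continuity of the barycenter map with respect to $W_p$. Finally, by Definition~\ref{def:L_derivative} the intrinsic derivative is $\nabla_m\Phi_1(m,y)=\nabla_y\frac{\delta\Phi_1}{\delta m}(m,y)=\nabla\phi_1(\overline{m})$, since the additive constant is annihilated by $\nabla_y$; this is the claimed formula.

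The computation is essentially routine; the only point requiring care is the bookkeeping for the flat derivative. The difference-quotient limit is exactly the directional derivative of $\phi_1$, which is immediate from differentiability and needs no uniformity, so the analytic content is light. The mild obstacle I expect is purely presentational: handling the additive-constant ambiguity in $\frac{\delta\Phi_1}{\delta m}$ consistently so that the normalization convention holds, while emphasizing that this ambiguity is irrelevant for $\nabla_m\Phi_1$ because it is removed by the spatial gradient.
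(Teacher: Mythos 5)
Your proposal is correct and follows essentially the same route as the paper: exploit affineness of the barycenter map to reduce the difference quotient to a directional derivative of $\phi_1$, read off the flat derivative $\frac{\delta\Phi_1}{\delta m}(m,y)=\nabla\phi_1\bigl(\int_{\rd}z\,m(dz)\bigr)\,y$, and differentiate in $y$. Your extra bookkeeping for the normalization convention is a harmless refinement the paper omits, since the additive constant disappears under $\nabla_y$ anyway.
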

\begin{proof}
	Indeed, we have that, for every probabilities $m,m'\in\mathcal{P}^p(\rd)$,
	\[\begin{split}\lim_{s\downarrow 0}&\frac{\Phi(m+s(m'-m))-\Phi(m)}{s}\\&=\lim_{s\downarrow 0}\frac{1}{s}\bigg[\phi_1\bigg(\int_{\rd}z((1-s)m+sm')(dz)\bigg)-\phi_1\bigg(\int_{\rd}zm(dz)\bigg)\bigg]\\
		&=\nabla_x\phi_1\bigg(\int_{\rd}zm(dz)\bigg)\cdot \int_{\rd} y[ m'(dy)-m(dy)].
	\end{split}\] Thus, \[\frac{\delta\Phi_1}{\delta m}(m,y)=\nabla_x\phi_1\bigg(\int_{\rd}zm(dz)\bigg) y.\] This yields the statement of the proposition. \end{proof}

Furthermore, we compute the intrinsic derivative of the mean of the function depending also on a probability.
\begin{proposition}\label{prop:computation:int_phi}
	Let 
	\begin{itemize}
		\item $\phi_2:\rd\times\mathcal{P}^p(\rd)\rightarrow\mathbb{R}$  be continuous and differentiable w.r.t. $x$ and $m$;
		\item $|\phi_2(x,m)|\leq \overline{C}_1(1+\|x\|^p+\mathcal{M}_p^p(m))$;
		\item $|\nabla_m\phi_2(x,m,y)|\leq \overline{C}_1(1+\|x\|^p+\mathcal{M}_p^p(m)+\|y\|^p)$;
		\item $\Phi_2(m)\triangleq \int_{\rd}\phi_2(x,m)m(dx)$.
	\end{itemize} Here $\overline{C}_1$ is a positive constant.
	Then,
	\[
	\nabla_m\Phi_2(m,y)=\nabla_x\phi_2(y,m)+\int_{\rd}\nabla_m\phi_2(x,m,y)m(dx).\] 
\end{proposition}
\begin{proof} Since, as we mentioned above, the flat derivative is defined up to an additive constant, we within this proof assume that, for each $x\in\rd$, $m\in \mathcal{P}^p(\rd)$,
	\begin{equation}\label{A:convention}
		\frac{\delta\phi_2}{\delta m}(x,m,0)=0.
	\end{equation}
	
	Now, let us compute $\frac{\delta\Phi_2}{\delta m}$. We have that, given a probability $m'$,
	\begin{equation}\label{A:Phi:m_diff_1}
		\begin{split}
			\lim_{s\downarrow 0}&\frac{\Phi_2(m+s(m'-m))-\Phi_2(m)}{s}\\&{}\hspace{20pt}=\lim_{s\downarrow 0}\int_{\rd}\phi_2(x,m+s(m'-m))[m'(dx)-m(dx)]\\&{}\hspace{40pt}+
			\lim_{s\downarrow 0}\frac{1}{s}\bigg[\int_{\rd}[\phi_2(x,m+s(m'-m))-\phi_2(x,m)]m(dx)\bigg].
		\end{split} 
	\end{equation} Furthermore, 
	for each $s$ and $x$, we have that 
	\[\begin{split}
		\phi_2(x,m+&s(m'-m))-\phi_2(x,m)\\&=s\int_0^1\int_{\rd}\frac{\delta\phi_2}{\delta m}(x,m+rs(m'-m),y)[m'(dy)-m(dy)]dr\\&=
		s\int_0^1\int_{\rd}\frac{\delta\phi_2}{\delta m}(x,m+rs(m'-m),0)[m'(dy)-m(dy)]dr.\end{split}\] Plugging this into the right-hand-side of~\eqref{A:Phi:m_diff_1}, we arrive at the equality:
	\begin{equation}\label{A:Phi_2_start}
		\begin{split}
			\lim_{s\downarrow 0}&\frac{\Phi_2(m+s(m'-m))-\Phi_2(m)}{s}\\&{}\hspace{-3pt}
			=\lim_{s\downarrow 0}\int_{\rd}\phi_2(x,m+s(m'-m))[m'(dx)-m(dx)]\\&{}\hspace{5pt}+
			\lim_{s\downarrow 0}\int_0^1\int_{\rd}\int_{\rd}\frac{\delta\phi_2}{\delta m}(x,m+rs(m'-m),y)m(dx)[m'(dy)-m(dy)]dr.
		\end{split} 
	\end{equation}
	Notice that   the function  $\rd\times (0,1]\ni(x,s)\mapsto \phi_2(x,m+s(m'-m))$ is continuous and is bounded by the function $\overline{C}_1'(1+\|x\|^p+\|y\|^p)$, where $\overline{C}_1'$ is a positive constant dependent on $\overline{C}_1$ and $p$.		
	Furthermore, the function $\rd\times \rd\times (0,1]\times [0,1]\ni(x,y,s,r)\mapsto\frac{\delta\phi_2}{\delta m}(x,m+rs(m'-m),y)$ is also continuous. Let us show that it grows not faster than $\|x\|^p+\|y\|^p$. Indeed, due to convention~\eqref{A:convention},
	\[
	\frac{\delta\phi_2}{\delta m}(x,m+rs(m'-m),y)=\int_0^1\nabla_m\phi_2(x,m+rs(m'-m),\alpha y)d\alpha.
	\] The growth condition on $\nabla_m\phi_2$ implies that the function $\rd\times \rd\times (0,1]\times [0,1]\ni(x,y,s,r)\mapsto\frac{\delta\phi_2}{\delta m}(x,m+rs(m'-m),y)$ is bounded by the function $\overline{C}_1''(1+\|x\|^p+\|y\|^p)$. Due to the dominated convergence theorem, one can pass  to the limit in the right-hand side of \eqref{A:Phi_2_start}  as $s\rightarrow 0$. Thus,
	\[
	\begin{split}
		\lim_{s\downarrow 0}\frac{\Phi_2(m+s(m'-m))-\Phi_2(m)}{s}&{}\\=
		\int_{\rd}\phi_2(y,m)[m'(dy)-&m(dy)]\\+\int_{\rd}\int_{\rd}\frac{\delta\phi_2}{\delta m}&(x,m,y)m(dx)[m'(dy)-m(dy)].
	\end{split}\]
	Therefore,
	\[\frac{\delta\Phi_2}{\delta m}(m,y)=\phi_2(y,m)+\int_{\rd}\frac{\delta\phi_2}{\delta m}(x,m,y)m(dx).\] Taking the derivative w.r.t. $y$, we obtain the statement of the proposition.
\end{proof} 

We complete this section with the formula of derivative of function depending on push-forward measure. 
\begin{proposition}\label{prop:derivative_push_forward} Let
	\begin{itemize}
		\item $\probspace$ be a probability space;
		\item $p>1$, $q$ be conjugate to $p$;
		\item $\Phi:\mathcal{P}^p(\rd)\rightarrow\mathbb{R}$ be such that  $\nabla_m\Phi$ is continuous and, for each $m\in\mathcal{P}^p(\rd)$, $y\in\rd$,
		\[\|\nabla_m\Phi(m,y)\|^q\leq \overline{C}_2(1+\mathcal{M}_p^p(m)+\|y\|^p),\] where $\overline{C}_2$ is a positive constant.
	\end{itemize} Then, there exists the Gateaux derivative of the mapping $L^p(\Omega,\mathcal{F},\pp;\rd)\ni X\mapsto \Phi(X\sharp\pp)$ and
	\[\nabla_X\Phi(X\sharp \pp)=\nabla_m\Phi(X\sharp\pp,X).\]
	
\end{proposition}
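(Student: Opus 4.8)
The plan is to verify directly that the lifted functional $L^p(\Omega,\mathcal{F},\pp;\rd)\ni X\mapsto\Phi(X\sharp\pp)$ admits a Gateaux derivative at each $X$ in every direction $V\in L^p(\Omega,\mathcal{F},\pp;\rd)$, and that this derivative is the pairing with $\nabla_m\Phi(X\sharp\pp,X)$. First I would record that the candidate derivative is well-posed: by the growth hypothesis, $\|\nabla_m\Phi(X\sharp\pp,X)\|^q\le\overline{C}(1+\mathcal{M}_p^p(X\sharp\pp)+\|X\|^p)$, and since $\mathcal{M}_p^p(X\sharp\pp)=\expect\|X\|^p<\infty$, the map $\omega\mapsto\nabla_m\Phi(X\sharp\pp,X(\omega))$ lies in $L^q(\Omega,\mathcal{F},\pp;\rds)$, so it defines an element of the dual of $L^p(\Omega,\mathcal{F},\pp;\rd)$ and the pairing $\expect[\nabla_m\Phi(X\sharp\pp,X)V]$ converges absolutely by H\"older's inequality.

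The computational core is to fix $V$, set $X_s\triangleq X+sV$, and apply the integral representation of the flat derivative displayed after Definition~\ref{def:var_derivatve} with $m'=X_s\sharp\pp$ and $m=X\sharp\pp$. Writing $\mu_r\triangleq(1-r)X\sharp\pp+rX_s\sharp\pp$ and transporting the integrals against $X_s\sharp\pp$ and $X\sharp\pp$ back to $\Omega$, this yields
\[
\Phi(X_s\sharp\pp)-\Phi(X\sharp\pp)=\int_0^1\expect\Bigl[\frac{\delta\Phi}{\delta m}(\mu_r,X_s)-\frac{\delta\Phi}{\delta m}(\mu_r,X)\Bigr]dr.
\]
Since $y\mapsto\frac{\delta\Phi}{\delta m}(\mu_r,y)$ is $C^1$ with gradient $\nabla_m\Phi(\mu_r,\cdot)$ (Definition~\ref{def:L_derivative}), the fundamental theorem of calculus along the segment $X\to X+sV$ turns the inner difference into $s\int_0^1\nabla_m\Phi(\mu_r,X+\theta sV)\cdot V\,d\theta$, so that
\[
\frac{\Phi(X_s\sharp\pp)-\Phi(X\sharp\pp)}{s}=\int_0^1\int_0^1\expect\bigl[\nabla_m\Phi(\mu_r,X+\theta sV)\cdot V\bigr]\,d\theta\,dr.
\]
For the pointwise limit as $s\to0$ I would use $W_p(X_s\sharp\pp,X\sharp\pp)^p\le\expect\|X_s-X\|^p=|s|^p\expect\|V\|^p\to0$, which gives $\mu_r\to X\sharp\pp$ in $W_p$ uniformly in $r$, together with $X+\theta sV\to X$ in $L^p$; continuity of $\nabla_m\Phi$ then forces the integrand to converge to $\nabla_m\Phi(X\sharp\pp,X)\cdot V$ for a.e. $(r,\theta,\omega)$.

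The main obstacle, and precisely the step that uses the growth hypothesis, is justifying the passage to the limit under the triple integral. The plan is a uniform domination for $|s|\le1$: one has $\mathcal{M}_p^p(\mu_r)\le\expect\|X\|^p+2^{p-1}(\expect\|X\|^p+\expect\|V\|^p)$ independently of $r,s$, and $\|X+\theta sV\|^p\le2^{p-1}(\|X\|^p+\|V\|^p)$ pointwise, so the growth bound gives
\[
\|\nabla_m\Phi(\mu_r,X+\theta sV)\|\le\bigl[\overline{C}\bigl(1+\mathcal{M}_p^p(\mu_r)+\|X+\theta sV\|^p\bigr)\bigr]^{1/q}\le G_0,
\]
with $G_0\in L^q(\Omega,\mathcal{F},\pp)$ independent of $r,\theta,s$. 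Hence the integrand is dominated on $[0,1]^2\times\Omega$ by $G_0\|V\|\in L^1$ (H\"older), the dominated convergence theorem applies to all three integrations, and I conclude
\[
\lim_{s\to0}\frac{\Phi(X_s\sharp\pp)-\Phi(X\sharp\pp)}{s}=\expect[\nabla_m\Phi(X\sharp\pp,X)\cdot V],
\]
which is exactly $\nabla_X\Phi(X\sharp\pp)=\nabla_m\Phi(X\sharp\pp,X)$. The only bookkeeping to watch is the transpose convention (with $\nabla_m\Phi\in\rds$ and $V\in\rd$ the product is scalar) and checking that the two-sided limit, not merely $s\downarrow0$, is covered by the same domination over $|s|\le1$.
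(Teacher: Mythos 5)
Your proposal is correct and follows essentially the same route as the paper's proof: the integral representation of the flat derivative transported back to $\Omega$, linearization of the inner difference via the $y$-derivative, a uniform $L^q$ domination of $\nabla_m\Phi$ along the interpolated measures and points obtained from the growth hypothesis via $2^{p-1}$-splittings, and then continuity of $\nabla_m\Phi$ plus dominated convergence and H\"older to pass to the limit. The only deviations are cosmetic: you use the fundamental theorem of calculus with an extra parameter $\theta$ where the paper invokes a mean value point $\xi_h(\omega)\in[0,1]$ (your version even sidesteps the measurability of $\xi_h$), and you explicitly handle the two-sided limit in $s$, which the paper covers implicitly by linearity of the candidate derivative.
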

This proposition is a slight extension of \cite[Proposition 2.2.3]{Master_cdll} where only the case of bounded derivative is considered. Certainly, the proof follows the method used in \cite{Master_cdll}.
\begin{proof}
	Let $X, Y\in L^p(\Omega,\mathcal{F},\pp;\rd)$. We shall prove that
	\begin{equation}\label{A:limit:Phi_difference}
		\lim_{h\downarrow 0}\frac{\Phi((X+hY)\sharp \pp)-\Phi(X\sharp \pp)}{h}=\expect [\nabla_m\Phi(X\sharp\pp,X)\cdot Y]. 
	\end{equation} 
	
	For $h>0$, $s\in [0,1]$, we denote $m\triangleq X\sharp\pp$, $m^h\triangleq (X+hY)\sharp \pp$, $\mu^{s,h}\triangleq m+s(m^h-m)$. Due to~\eqref{equality:Phi_integral}, we have that 
	\[\begin{split}
		\Phi(&(X+hY)\sharp \pp)-\Phi(X\sharp \pp)\\&=\int_0^1\int_{\rd} \frac{\delta\Phi}{\delta m}(\mu^{s,h},y)[m^h(dy)-m(dy)]ds\\&=
		\int_0^1\mathbb{E} \bigg[\frac{\delta\Phi}{\delta m}(\mu^{s,h},X+hY)-\frac{\delta\Phi}{\delta m}(\mu^{s,h},X)\bigg]ds.
	\end{split}\] 
	Notice that for each $x,y\in\rd$,
	\[\frac{\delta\Phi}{\delta m}(\mu^{s,h},x+hy)-\frac{\delta\Phi}{\delta m}(\mu^{s,h},x)=h\int_0^1\nabla_m\Phi(\mu^{s,h},x+rhy)ydr.\]
	Therefore,
	\[\frac{\Phi((X+hY)\sharp \pp)-\Phi(X\sharp \pp)}{h}=\int_0^1\int_0^1\mathbb{E}\Big[ \nabla_m\Phi(\mu^{s,h},X+rhY)Y\Big]dsdr.\]	
	This equality implies that 
	\begin{equation}\label{A:ineq:lim_diff_Phi}
		\begin{split}
			\bigg|&\frac{\Phi((X+hY)\sharp \pp)-\Phi(X\sharp \pp)}{h}-\expect [\nabla_m\Phi(X\sharp\pp,X)\cdot Y]\bigg|\\&\leq 
			\int_0^1\int_0^1\expect \Big[ \|\nabla_m\Phi(\mu^{s,h},X+hr Y)- \nabla_m\Phi(m,X)\| \|Y\|\Big]dsdr \\ &\leq
			\Bigg[\int_0^1\int_0^1\expect \|\nabla_m\Phi(\mu^{s,h},X+hr Y)- \nabla_m\Phi(m,X)\|^q dsdr\Bigg]^{1/q}\|Y\|_{L^p}.
		\end{split}
	\end{equation} Now assume that $h\in (0,1]$. Notice that $\pp$-a.s.
	\[\begin{split}
		\|\nabla_m&\Phi(\mu^{s,h},X+hr Y)- \nabla_m\Phi(m,X)\|^q\\&\leq 
		2^{q-1}\|\nabla_m\Phi(\mu^{s,h},X+hr Y)\|^q+2^{q-1}\|\nabla_m\Phi(m,X)\|^q.
	\end{split}\]
	Using the assumption of the proposition, we evaluate the right-hand side of this inequality and obtain	that the following inequality holds $\pp$-a.s.:
	\begin{equation}\label{A:ineq:nabla_Phi_diff}\\ \begin{split}
			\|\nabla_m&\Phi(\mu^{s,h},X+hr Y)- \nabla_m\Phi(m,X)\|^q\\&\leq
			2^{q-1}\overline{C}_2(2+\mathcal{M}_p^p(\mu^{s,h})+\mathcal{M}_p^p(m)+\|X+hr Y\|^p+\|X\|^p).\end{split}
	\end{equation} Furthermore, we have that $\mathcal{M}_p^p(m)=\mathcal{M}_p^p(X\sharp\pp)=\norm{X}{L^p}^p$, while, since $\mu^{s,h}=(X\sharp\pp)+s(((X+hY)\sharp \pp)-(X\sharp\pp))$,
	$\mathcal{M}_p^p(\mu^{s,h})\leq \mathcal{M}_p^p(X\sharp \pp)+\mathcal{M}_p^p((X+hY)\sharp \pp)\leq (1+2^{p-1})\norm{X}{L^p}^p+2^{p-1}\norm{Y}{L^p}$. 
	Plugging this estimates into right-hand side of~\eqref{A:ineq:nabla_Phi_diff}, we obtain that $\pp$-a.s. 
	\[\begin{split}
		\|\nabla_m\Phi(\mu^{s,h},&X+hr Y)- \nabla_m\Phi(m,X)\|^q \\ &\leq 
		\overline{C}_2'(1+\norm{X}{L^p}^p+\norm{Y}{L^p}^p+\|X\|^p+\|Y\|^p),
	\end{split} 
	\] where $\overline{C}_2'$ is a constant dependent only on $\overline{C}_2$ and $p$. Thus, the random variable
	\[\|\nabla_m\Phi(\mu^{s,h},X+hr Y)- \nabla_m\Phi(m,X)\|^q\] is bounded by a summable random variable. Furthermore, the assumption that $\nabla_m\Phi$ is continuous yields that,  for $\lambda\otimes\lambda\otimes\pp$-a.e. $(r,s,\omega)\in [0,1]\times[0,1]\times\Omega$,
	\[\nabla_m\Phi(\mu^{s,h},X(\omega)+hr Y(\omega))\rightarrow \nabla_m\Phi(m,X(\omega))\text{ as }h\rightarrow 0.\] Therefore, due to the dominated convergence theorem, we obtain that 
	\[
	\int_0^1\int_0^1\expect\|\nabla_m\Phi(\mu^{s,h},X+hr Y)- \nabla_m\Phi(m,X)\|^qdsdr \] tends to 0 while $h\rightarrow 0$. This means that the right-hand side of (\ref{A:ineq:lim_diff_Phi}) tends to 0 and yields~\eqref{A:limit:Phi_difference}.
\end{proof}

\section{Properties of the perturbed dynamics}\label{appendix:proofs_from_Sect_5}	

\subsection{Dense set of the spike variations}\label{appendix:Lebesgue}
\renewcommand{\thetheorem}{B.\arabic{theorem}}
In this section, we work the Lagrangian approach introduced in Section~\ref{sect:lagrangian}. 
\begin{proof}[Proof of Proposition \ref{prop:N_T}]
	First, we claim that the space	 $L^p(\Omega,\mathcal{F},\pp;U)$ is separable. Indeed, \cite[Proposition 1.2.29]{Analysis_in_Banach_Spaces}  states that  $L^p(\Omega,\mathcal{F},\pp;U)$ is separable whenever $\mathcal{F}$ is countably generated. Taking into account the assumption that $\probspace$ is standard and, thus, due to \cite[Example 6.5.2]{Bogachev}, $\mathcal{F}$ is countably generated, we obtain the desired separability of  $L^p(\Omega,\mathcal{F},\pp;U)$. In the following, let $\mathcal{N}$ be a dense countable subset of $L^p(\Omega,\mathcal{F},\pp;U)$.

	Furthermore, we consider the control process $(X^*,u^*)$. By \cite[Theorem II.2.9]{Diestel_Uhl}, there exists a set $\mathcal{T}^*\subset [0,T]$ such that
	\begin{itemize}
		\item $\lambda([0,T]\setminus \mathcal{T}^*)=0$;
		\item for each $s\in\mathcal{T}^*$, equalities \eqref{equality:f_star}, \eqref{equality:f_0_star} hold true.
	\end{itemize} Additionally, without loss of generality, one can assume that, for each $s\in\mathcal{T}^*$,
	\[\|u^*(s)\|_{L^p}<+\infty.\]
	Analogously, for each $\nu\in \mathcal{N}$, we consider the pair $(X^*,\nu)$. By \cite[Theorem II.2.9]{Diestel_Uhl}, we conclude that there exists a set $\mathcal{T}_\nu\subset [0,T]$ satisfying the following conditions
	\begin{itemize}
		\item $\lambda([0,T]\setminus \mathcal{T}_\nu)=0$;
		\item for each $s\in\mathcal{T}_\nu$, equalities \eqref{equality:f_nu}, \eqref{equality:f_0_nu} hold true.
	\end{itemize} Letting 
	\[\mathcal{T}\triangleq \mathcal{T}_*\bigcap\Bigg[\bigcap_{\nu\in\mathcal{N}}\mathcal{T}_\nu\Bigg],\] we complete the proof.
\end{proof}

\subsection{Prior estimates of the perturbed dynamics}\label{appendix:sub:prior}
This section is concerned with the proof of Proposition \ref{prop:Z_h_bounds}. It uses the Lipschitz continuity of the function $f$ w.r.t. $x$ and $m$. Recall that assumption~\ref{assumption:derivative_f} and Proposition~\ref{prop:lipschitz} yield that, for every $t\in [0,T]$, $x_1,x_2\in \rd$, $m_1,m_2\in\prd$, $u\in U$,
\begin{equation}\label{ineq:Lip_f}
	\|f(t,x_1,m_1,u)-f(t,x_2,m_2,u)\|\leq C_x\|x_1-x_2\|+C_m W_p(m_1,m_2).
\end{equation} Here $C_x$ and $C_m$ are upper bounds for the derivatives of the function $f$ w.r.t. $x$ and $m$ respectively.

\begin{proof}[Proof of Proposition \ref{prop:Z_h_bounds}]
	First, notice that
	\[\int_0^T\expect\|u_\nu^h(t)\|^pdt\leq  \int_0^T\expect\|u^*(t)\|^pdt+\|\nu\|_{L^p}^ph.\] Thus,
	\begin{equation}\label{ineq:u_bound}
		\norm{u_\nu^h}{\up}\leq C_u\triangleq \norm{u_*}{\up}+T^{1/p}\|\nu\|_{L^p}.
	\end{equation}

	Due to assumption~\ref{assumption:sublinear} and equality $Z^h_\nu(s)=X^*(s)$, we have the following estimate  $\pp$-a.s.:
	\[\begin{split}
		\|Z^h_\nu(t&)-X^*(s)\|\\&\leq \int_s^t\|f(\tau,Z^h(\tau),Z^h(\tau)\sharp \pp,u^h_\nu(\tau))\|d\tau \\ &\leq
		C_\infty(t-s)+C_\infty\int_s^t\big(\|Z^h(\tau)\|+\|Z^h(\tau)\|_{L^p}+\|u^h_\nu(\tau)\|\big)d\tau.
	\end{split}
	\] Hence, using the triangle  inequality, we conclude that, if $t\in [s,T]$,
	\begin{equation}\label{ineq:Z_h_integral}
		\begin{split}
			\|Z^h_\nu&(t)-X^*(s)\|_{L^p}\\&\leq C_\infty(t-s)+2C_\infty\int_s^t\|Z^h_\nu(\tau)\|_{L^p}d\tau+C_\infty \int_s^t\|u^h_\nu(\tau)\|_{L^p}d\tau.\end{split}
	\end{equation}
	Thanks to~(\ref{ineq:u_bound}), we obtain
	\begin{equation}\label{ineq:Z_h_nu_star_t}
		\|Z^h_\nu(t)-X^*(s)\|_{L^p}\leq C_\infty(t-s)+C_\infty C_u+2C_\infty\int_s^t\|Z^h_\nu(\tau)\|_{L^p}d\tau.
	\end{equation}
	Since $X^*(s)\in L^p(\Omega,\mathcal{F},\pp;\rd)$, estimate~(\ref{ineq:Z_h_nu_star_t}) together with the Gronwall's inequality give the first statement of the proposition.

	Estimating the right-hand side of~(\ref{ineq:Z_h_integral}) according to the first statement of the proposition, we obtain that, for $t\in [s,s+h]$
	\[\|Z^h_\nu(t)-X^*(s)\|_{L^p}\leq C_\infty(1+2C_0+\|\nu\|_{L^p})(t-s).\] This proves the second statement of the proposition. 
	
	To prove the third statement of the proposition, we use the assumption that $s\in\mathcal{T}$. In particular, for such $s$ equality \eqref{equality:f_star} holds true. Thus, one can find $\bar{h}$ such that, for any $h\in (0,\bar{h}]$,
	\begin{equation}\label{ineq:bar_h}
		\begin{split}
			\expect\Bigl\|\frac{1}{h}\int_s^{s+h} f(\tau,&X^*(\tau),X^*(\tau)\sharp \pp,u^*(\tau))d\tau\\&-f(s,X^*(s),X^*(s)\sharp \pp,u^*(s))\Bigr\|^p\leq 1.\end{split}\end{equation} Additionally, the inclusion $s\in \mathcal{T}$ assures, in particular (see \eqref{ineq:u_bound}), that $\norm{u^*(s)}{L^p}<+\infty$. This,~(\ref{ineq:bar_h}) and  assumption~\ref{assumption:sublinear} give that
	\[\begin{split}
		\norm{X^*(s+h)-&X^*(s)}{L^p}= \bigg\|\int_s^{s+h}f(\tau,X^*(\tau),X^*(\tau)\sharp \pp,u^*(\tau))d\tau\bigg\|_{L^p}\\&\leq
		\bigg\|\int_s^{s+h}f(\tau,X^*(\tau),X^*(\tau) \sharp \pp,u^*(\tau))d\tau\\&\hspace{120pt}-hf(s,X^*(s),X^*(s)\sharp \pp,u^*(s))\bigg\|_{L^p}\\&\hspace{50pt}+h\|f(s,X^*(s),X^*(s)\sharp \pp,u^*(s))\|_{L^p}\\ &\leq h+C_\infty(1+2\|X^*(s)\|_{L^p}+\|u^*(s)\|_{L^p})h=C_1'h.
	\end{split}\]

	This and the second statement of the proposition imply that, if $h\in (0,\bar{h}]$,
	\[\|Z^h_\nu(s+h)-X^*(s+h)\|_{L^p}\leq (C_1+C_1') h.\] Furthermore,   $u^h_\nu(t)=u^*(t)$ when $t\in [s+h,T]$. This together with Lipschitz continuity of the function $f$ (see (\ref{ineq:Lip_f})) yield the following inequality, for $t\in [s+h,T]$:
	\[\begin{split}
		\|Z^h&{}_\nu(t)-X^*(t)\|_{L^p}\\&\leq
		\|Z^h_\nu(s+h)-X^*(s+h)\|_{L^p}+\bigg\|\int_{s+h}^t [f(\tau,Z^h_\nu(\tau),Z^h_\nu(\tau)\sharp \pp,u^*_\nu(\tau))\\&\hspace{176pt}- f(\tau,X^*(\tau),X^*(\tau)\sharp \pp,u^*_\nu(\tau))]d\tau\bigg\|
		\\&\leq (C_1+C_1')h+(C_x+C_m)\int_{s+h}^t\|Z^h_\nu(\tau)-X^*(\tau)\|_{L^p}d\tau.\end{split}\] Using the Gronwall's inequality, we obtain the third statement of the proposition.
	
\end{proof}


\subsection{Derivative of the perturbed process}\label{appendix:sub:process}

This section is concerned with the proof of Proposition~\ref{prop:derivative_y_h}. We will use the following property.

If $X\in  \xp$, then its restriction on $[s,r]\times\Omega$  lies in $L^p([s,r]\times\Omega,\mathcal{B}_\lambda([s,r])\otimes\mathcal{F},\lambda\otimes \pp;\rd)$. Additionally, the mapping $[s,r]\ni t\mapsto X(t,\omega)$ lies in $C([s,r];\rd)$ for $\pp$-a.e. $\omega\in\Omega$. We denote by $\|X\|_{L^p,s,r}$  the $L^p$-norm of the restriction of $X$ on $[s,r]$ regarded as an element of $L^p([s,r]\times\Omega,\mathcal{B}_\lambda([s,r])\otimes\mathcal{F},\lambda\otimes \pp;\rd)$, i.e.,
\[\|X\|_{L^p,s,r}\triangleq \bigg[\int_s^r\expect\|X(t)\|^p\bigg]^{1/p}dt.\]
The following relation between $\norm{X}{\xp}$ and $\norm{X}{L^p,s,r}$ is fulfilled:
\begin{equation*}\label{ineq:norm_L^p_X_p}
	\|X\|_{L^p,s,r}\leq (r-s)^{1/p}\norm{X}{\xp}. 
\end{equation*}

Furthermore, assume that a measurable function $X:[s,r]\times\Omega\rightarrow\rd$ is such that \begin{itemize}
	\item  for each  $t\in [0,T]$, $X(t)\in L^p(\Omega,\mathcal{F},\pp;\rd)$,
	\item the function $t\mapsto \norm{X(t)}{L^p}$ is  bounded.
\end{itemize} Then, $X\in L^p([s,r]\times\Omega,\mathcal{B}_\lambda([s,r])\otimes\mathcal{F},\lambda\otimes \pp;\rd)$ for every $s,r\in [0,T]$, $s<r$, and
\begin{equation}\label{ineq:norm_L^p_sup}
	\|X\|_{L^p,s,r}\leq (r-s)^{1/p}\sup_{t\in [s,r]}\|X(t)\|_{L^p}.
\end{equation}

\begin{proof}[Proof of Proposition \ref{prop:derivative_y_h}]
	For simplicity, put
	\begin{equation}\label{intro:F_full}
		F(t,\omega)\triangleq \\ f^*_x(t,\omega)Y_\nu(t,\omega)+(f_m^*\diamond Y_\nu) (t,\omega).
	\end{equation} 
	
	Notice that, due to Proposition~\ref{prop:Y_nu}, $\|Y_\nu(t)\|_{L^p}$ is uniformly  bounded. Furthermore, the functions $f^*_x$ and $f_m^*$ are bounded (see  assumption \ref{assumption:derivative_f}). Therefore,
	\begin{equation}\label{ineq:F_bound}\|F(t)\|_{L^p}\leq C_4,\end{equation} where $C_4$ is a constant (certainly, dependent on $(X^*,u^*)$).
	
	Choose $t\in (s,T]$.
	Let $N$ be such that, for every $n>N$, we have that $t>s+h_n$. 
	
	Since $(X^*,u^*)$ is an admissible Lagrangian process, $Z^{h_n}_\nu$ satisfies \eqref{eq:ODE_omega}, $Y_\nu$ is a solution of \eqref{eq:derivative:Y}, we have that 
	\begin{equation*}
		\begin{split}
			\frac{1}{h_n}\|Z_\nu^{h_n}(s+h_n)-X&{}^*(s+h_n)-h_n Y_\nu(s+h_n))\|_{L^p}\\ \leq 
			\frac{1}{h_n}\bigg\|\int_s^{s+h_n} \big[f(\tau,&Z_\nu^h(\tau),Z_\nu^h(\tau)\sharp \pp,\nu)- f(\tau,X^*(\tau),X^*(\tau)\sharp \pp,u^*(\tau))\big]d\tau
			\\&{}\hspace{105pt}-h_n \Delta^s_\nu f^*-h_n\int_s^{s+h_n}F(\tau)d\tau\bigg\|_{L^p}.
		\end{split}
	\end{equation*} Since $f$ is Lipschitz continuous w.r.t. $x$ and $m$ with constants $C_x$ and $C_m$ respectively (see~\eqref{ineq:Lip_f}), $\|F(t)\|_{L^p}$ is bounded (see~(\ref{ineq:F_bound})),  Proposition~\ref{prop:Z_h_bounds}, we derive the following
	\begin{equation}\label{ineq:Z_h_s_s_plus_h}
		\frac{1}{h_n}\|Z_\nu^h(s+h_n)-X^*(s+h_n)-h_n Y_\nu(s+h_n)\|_{L^p} \leq a_n^{(1)},\end{equation}
	where
	\begin{equation}\label{intro:a_1_def}\begin{split}
			a_n^{(1)}\triangleq	\| f(s,X^*&(s),X^*(s)\sharp \pp,\nu)\\&\hspace{40pt}-f(s,X^*(s),X^*(s)\sharp \pp,u^*(s))- \Delta^s_\nu f^*\|_{L^p}\\ +\frac{1}{h_n} &{}\bigg\|\int_s^{s+h_n}\big[f(s,X^*(s),X^*(s)\sharp \pp,u^*(s))\\\\&\hspace{70pt}-f(\tau,X^*(\tau),X^*(\tau)\sharp \pp,u^*(\tau))\big]d\tau\bigg\|_{L^p}
			\\ +\frac{1}{h_n} &{}\bigg\|\int_s^{s+h_n}\big[f(s,X^*(s),X^*(s)\sharp \pp,\nu)\\\\&\hspace{70pt}-f(\tau,X^*(\tau),X^*(\tau)\sharp \pp,\nu)\big]d\tau\bigg\|_{L^p}
			\\+(&C_xC_1+C_mC_1+C_xC_2+C_mC_2+C_4)h_n.
		\end{split}
	\end{equation} 
	The first term in the previous formula is equal to $0$ (see~(\ref{intro:variations_f})). The second and the third terms in the right-hand side of \eqref{intro:a_1_def}
	tend to 0 due to the fact that $\nu\in\mathcal{N}$, while $s\in\mathcal{T}$ (see equalities \eqref{equality:f_star}, \eqref{equality:f_nu} in Proposition~\ref{prop:N_T}). Simultaneously, $(C_xC_1+C_mC_1+C_xC_2+C_mC_2+C_4)h_n\rightarrow 0$ as $n\rightarrow\infty.$
	Thus, the sequence $\{a_n^{(1)}\}_{n=1}^\infty$ converge to 0 when $n\rightarrow \infty$.
	
	Furthermore, 
	\begin{equation}\label{ineq:Z_h_Z_star_Lp:triangle}
		\begin{split}
			\frac{1}{h_n}\|&Z_\nu^{h_n}(t)-X^*(t)-h_n Y_\nu(t))\|_{L^p}\\&\leq \frac{1}{h_n}\|Z_\nu^{h_n}(s+h_n)-(X^*(s+h_n)+h_n Y_\nu(s+h_n))\|_{L^p}\\&{}\hspace{30pt}+
			\frac{1}{h_n}\bigg\|\int_{s+h_n}^t[f(\tau,Z_\nu^{h_n}(\tau),Z_\nu^{h_n}(\tau)\sharp \pp,\nu)\\&{}\hspace{77pt}- f(\tau,X^*(\tau),X^*(\tau)\sharp \pp,u^*(\tau))-h_nF(\tau)]d\tau\bigg\|_{L^p}\\ &\leq a_n^{(1)}+
			\frac{1}{h_n}\bigg\|\int_{s+h_n}^t[f(\tau,Z_\nu^{h_n}(\tau),Z_\nu^{h_n}(\tau)\sharp \pp,\nu)\\&{}\hspace{77pt}- f(\tau,X^*(\tau),X^*(\tau)\sharp \pp,u^*(\tau))-h_nF(\tau)]d\tau\bigg\|_{L^p}.
		\end{split}
	\end{equation}

	Now we evaluate the second term in the right-hand side of~(\ref{ineq:Z_h_Z_star_Lp:triangle}). First, notice that, by  \cite[Theorem II.2.4, (i)]{Diestel_Uhl},
	\begin{equation}\label{ineq:Z_h_Z_star_Lp:second:1}\begin{split}
			\bigg\|\int_{s+h_n}^t &[f(\tau,Z_\nu^{h_n}(\tau),Z_\nu^{h_n}(\tau)\sharp \pp,u^*(\tau))\\&{}\hspace{77pt}- f(\tau,X^*(\tau),X^*(\tau)\sharp \pp,u^*(\tau))-h_nF(\tau)]d\tau\bigg\|_{L^p}\\ \leq 
			&\int_{s+h_n}^t\|f(\tau,Z_\nu^{h_n}(\tau),Z_\nu^{h_n}(\tau)\sharp \pp,u^*(\tau))\\&{}\hspace{77pt}- f(\tau,X^*(\tau),X^*(\tau)\sharp \pp,u^*(\tau))-h_nF(\tau)\|_{L^p}d\tau.
		\end{split}
	\end{equation}

	Simultaneously, the following equality holds true $\pp$-a.s.:
	\[\begin{split}f(\tau,Z_\nu^{h_n}(\tau),Z_\nu^{h_n}(\tau)&\sharp \pp,u^*(\tau))- f(\tau,X^*(\tau),X^*(\tau)\sharp \pp,u^*(\tau))\\=
		(f(\tau,Z_\nu^{h_n}(\tau)&,Z_\nu^{h_n}(\tau)\sharp \pp,u^*(\tau))- f(\tau,X^*(\tau),Z_\nu^h(\tau)\sharp \pp,u^*(\tau)))\\&+ (f(\tau,X^*(\tau),Z_\nu^{h_n}(\tau)\sharp \pp,u^*(\tau))\\&- f(\tau,X^*(\tau),X^*(\tau)\sharp \pp,u^*(\tau))).\end{split}\]
	Therefore, by the triangle inequality and definition of the function $F$ (see~(\ref{intro:F_full})), we have that
	\begin{equation}\label{ineq:Z_h_Z_star_Lp:second:Holder}
		\begin{split}
			\|f(\tau,&Z_\nu^{h_n}(\tau),Z_\nu^{h_n}(\tau)\sharp \pp,u^*(\tau))\\&{}\hspace{30pt}- f(\tau,X^*(\tau),X^*(\tau)\sharp \pp,u^*(\tau))-{h_n}F(\tau)\|_{L^p}\\ \leq 
			&\|f(\tau,Z_\nu^{h_n}(\tau),Z_\nu^{h_n}(\tau)\sharp \pp,u^*(\tau))\\&{}\hspace{30pt}- f(\tau,X^*(\tau),Z_\nu^{h_n}(\tau)\sharp \pp,u^*(\tau))-{h_n} f_x^*(\tau)Y_\nu(\tau)\|_{L^p} \\&+
			\|f(\tau,X^*(\tau),Z_\nu^{h_n}(\tau)\sharp \pp,u^*(\tau))\\&{}\hspace{30pt}- f(\tau,X^*(\tau),X^*(\tau)\sharp \pp,u^*(\tau))-{h_n}(f_m^*\diamond Y_\nu) (\tau,\omega)\|_{L^p}.
		\end{split}
	\end{equation}
	Since $f$ is continuously differentiable w.r.t. $x$, we conclude that
	\[
	\begin{split}f(\tau,&Z_\nu^{h_n}(\tau),Z_\nu^{h_n}(\tau)\sharp \pp,u^*(\tau))- f(\tau,X^*(\tau),Z_\nu^{h_n}(\tau)\sharp \pp,u^*(\tau))\\&=
		\int_0^1\nabla_xf(\tau,y^n_1(r,\tau),Z^{h_n}_\nu(\tau)\sharp \pp,u^*(\tau))(Z_\nu^h(\tau)-X^*(\tau))dr.
	\end{split} 
	\] Above we put
	\[y^n_1(r,\tau,\omega)\triangleq X^*(\tau,\omega)+r(Z_\nu^{h_n}(\tau,\omega)-X^*(\tau,\omega))\] and omit the dependence on $\omega$. Notice that, due to Corollary \ref{corollary:almost everywhere}, $y^n_1(r,\tau,\omega)$ tends to $X^*(\tau,\omega)$ for $\lambda\otimes\lambda\otimes\pp$-a.e. $r$, $\tau$ and $\omega$ as $n\rightarrow\infty$.
	
	Taking into account the definition of $f_x^*$ (see~(\ref{intro:f_x})), we have
	\[
	\begin{split}
		\|f(\tau,Z_\nu^{h_n}&(\tau),Z_\nu^{h_n}(\tau)\sharp \pp,u^*(\tau))\\&{}\hspace{30pt}- f(\tau,X^*(\tau),Z_\nu^{h_n}(\tau)\sharp \pp,u^*(\tau))-{h_n} f_x^*(\tau)Y_\nu(\tau)\|\\\leq 
		\int_0^r&\|\varpi_{x}^n(r,\tau)\|dr \cdot\|Z_\nu^{h_n}(\tau)-X^*(\tau)\|+
		\|f_x^*(\tau)(Z_\nu^{h_n}(\tau)-X^*(\tau))\|.
	\end{split}
	\]
	Here we denote
	\begin{equation*}\label{intro:varpi_x}
		\begin{split}
			\varpi_{x}^n(r,\tau,\omega)\triangleq \nabla_xf(\tau,y^n_1(\tau,r,\omega),&Z^{h_n}_\nu(\tau)\sharp \pp,u^*(\tau,\omega))\\&-\nabla_x f(\tau,X^*(\tau,\omega),X^*(\tau)\sharp \pp,u^*(\tau,\omega))
		\end{split}
	\end{equation*} and omit the dependence on $\omega$. 
	Therefore, using the H\"older inequality, one can estimate the integral over $[s+h,t]$ of the first term in the right-hand side of~(\ref{ineq:Z_h_Z_star_Lp:second:Holder})
	\begin{equation}\label{ineq:Z_h_Z_star_Lp:second:Holder:first:1}
		\begin{split}
			\int_{s+h_n}^t\|f(\tau,&Z_\nu^{h_n}(\tau),Z_\nu^{h_n}(\tau)\sharp \pp,u^*(\tau))\\&- f(\tau,X^*(\tau),Z_\nu^{h_n}(\tau)\sharp \pp,u^*(\tau))-{h_n} f_x^*(\tau)Y_\nu\|_{L^p}dt \\ \leq
			\bigg[\int_s^T&\int_\Omega\int_0^1\|\varpi^n_{x}(r,\tau,\omega)\|^q dr\pp(d\omega)d\tau\bigg]^{1/q} \|Z_\nu^{h_n}-X^*\|_{L^p,s,T}\\&+
			\int_{s+h_n}^t\|f_x^*(\tau)\|_{L^q}\|Z_\nu^{h_n}(\tau)-X^*(\tau)-h_nY_\nu(\tau))\|_{L^p}d\tau.
		\end{split}
	\end{equation}
	
	Recall  (see Proposition~\ref{prop:Z_h_bounds}) that, for every $\tau\in [s,T]$, $\|Z_\nu^{h_n}(\tau)-X^*(\tau)\|_{L^p}\leq C_2h$. Therefore, by~\eqref{ineq:norm_L^p_sup},
	\[\|Z_\nu^{h_n}-X^*\|_{L^p,s,T}\leq T^{1/p}C_2h_n.\]
	Additionally, thanks  to assumption~\ref{assumption:derivative_f},
	\begin{equation}\label{ineq:norm_f_x_star}
		\|f_x^*(\tau)\|\leq C_x.
	\end{equation}
	Since 
	\begin{itemize}
		\item $f_x$ is continuous, 
		\item $y^n_1(r,\tau,\omega)$ tends to $X^*(\tau,\omega)$ for $\lambda\otimes\lambda\otimes\pp$-a.e. $r$, $\tau$ and $\omega$ as $n\rightarrow\infty$,
		\item $\|Z^{h_n}_\nu(\tau)-X^*(\tau)\|_{L^p}\rightarrow 0$ as $n\rightarrow\infty$ uniformly w.r.t. time variable,
	\end{itemize} the sequence $\{\varpi^n_{x}(r,\tau,\omega)\}$ converges to zero $\lambda\otimes\lambda\otimes \pp$-a.e. when $n\rightarrow \infty$. Moreover, due to assumption~\ref{assumption:derivative_f},
	\[\|\varpi_{x}^n(r,\tau,\omega)\|\leq 2C_x.\] Therefore, by the dominated convergence theorem, 
	the quantity
	\begin{equation}\label{intro:a_2}a_n^{(2)}\triangleq C_2T^{1/p}\bigg[\int_s^T\int_\Omega\int_0^1\|\varpi_{x}^n(r,\tau,\omega)\|^q dr\pp(d\omega)d\tau\bigg]^{1/q}\end{equation} tends to zero as $n\rightarrow \infty$. Plugging this estimate and~\eqref{ineq:norm_f_x_star} into~(\ref{ineq:Z_h_Z_star_Lp:second:Holder:first:1}), we conclude that
	\begin{equation}\label{ineq:Z_h_Z_star_Lp:second:Holder:first:final}
		\begin{split}
			\int_{s+h_n}^t\|f(\tau,&Z_\nu^{h_n}(\tau),Z_\nu^{h_n}(\tau)\sharp \pp,u^*(\tau))\\&{}\hspace{30pt}- f(\tau,X^*(\tau),Z_\nu^{h_n}(\tau)\sharp \pp,u^*(\tau))-{h_n} f_x^*(\tau)Y_\nu\|_{L^p}dt \\ &\leq
			a_n^{(2)}h+C_x       \int_{s+h_n}^t\|Z_\nu^{h_n}(\tau)-X^*(\tau)-h_nY_\nu(\tau))\|_{L^p}d\tau.
		\end{split}
	\end{equation}
	
	Now let us evaluate the  integral over $[s+h,t]$ of the second term in the right-hand side of~(\ref{ineq:Z_h_Z_star_Lp:second:Holder}).
	Since the function $m\mapsto f(\tau,x,m,u)$ is continuously differentiable w.r.t. $m$, letting, for the given $\tau \in [s,T]$ and $\theta\in [0,1]$, 
	\[m^n(\theta,\tau)\triangleq \theta Z_\nu^{h_n}(\tau)\sharp \pp+(1-\theta)X^*(\tau)\sharp \pp,\] we obtain
	\[\begin{split}
		f(&\tau,X^*(\tau),Z_\nu^{h_n}(\tau)\sharp \pp,u^*(\tau))- f(\tau,X^*(\tau),X^*(\tau)\sharp \pp,u^*(\tau))\\&=
		\int_0^1\int_{\rd} \frac{\delta f}{\delta m}(t,X^*(\tau),m^n(\theta,\tau),y,u)((Z_\nu^{h_n}(\tau)\sharp \pp)(dy)\\&{}\hspace{160pt}-(X^*(\tau)\sharp \pp)(dy))d\theta\\&=
		\int_0^1\int_{\Omega} \bigg[\frac{\delta f}{\delta m}(\tau,X^*(\tau),m^n(\theta,\tau),Z_\nu^{h_n}(\tau,\omega'),u^*(\tau))\\&\hspace{60pt}-\frac{\delta f}{\delta m}(\tau,X^*(\tau),m^n(\theta,\tau),X^*(\tau,\omega'),u^*(\tau)\bigg]\pp(d\omega')d\theta\\ &=
		\int_0^1\int_{\Omega}\int_0^1 \nabla_mf(t,X^*(\tau),m^n(\theta,\tau),y^n_2(r,\tau,\omega'),u^*(\tau))\\&\hspace{130pt}(Z_\nu^h(\tau,\omega')-X^*(\tau,\omega'))dr\pp(d\omega')d\theta.
	\end{split}
	\] Here we put
	\begin{equation}\label{intro:y_n_2}y^n_2(r,\tau,\omega')\triangleq X^*(\tau,\omega')+r(Z_\nu^{h_n}(\tau,\omega')- X^*(\tau,\omega')\end{equation}
	Denote
	\begin{equation}\label{inro:varpi_m_n}
		\begin{split}
			\varpi_{m}^n(\theta,r,&\tau,\omega,\omega')\\ \triangleq\nabla_m&f(\tau,X^*(\tau,\omega),\theta Z_\nu^{h_n}(\tau)\sharp \pp+(1-\theta)X^*(\tau)\sharp \pp ,y^n_2(r,\tau,\omega'),u^*(\tau,\omega)) \\&-\nabla_mf(\tau,X^*(\tau,\omega),X^*(\tau)\sharp \pp,X^*(\tau,\omega'),u^*(\tau,\omega)).
		\end{split}
	\end{equation} Therefore, using the definitions of $\nabla_m f$ and $Y_\nu$ (see~(\ref{intro:langle})), we have that
	\[
	\begin{split}
		\|f(\tau,X^*(\tau),&Z_\nu^{h_n}(\tau)\sharp \pp,u^*(\tau))\\&\hspace{30pt}- f(\tau,X^*(\tau),X^*(\tau)\sharp \pp,u^*(\tau))-{h_n} (f_m^*\diamond Y_\nu) (\tau,\omega)\|\\\leq 
		\int_0^1\int_{\Omega}&\int_0^1\|\varpi_m^n(\theta,r,\tau,\omega')\|\|Z_\nu^h(\tau,\omega')-X^*(\tau,\omega')\| dr\pp(d\omega')d\theta \\+
		&\int_\Omega \|f_m^*(\tau,\omega,\omega')\|\|Z_\nu^h(\tau,\omega')-X^*(\tau,\omega')-h_nY_\nu(\tau,\omega')\|\pp(d\omega').
	\end{split}
	\] This, the H\"older's inequality and assumption~\ref{assumption:derivative_f} give that
	\begin{equation}\label{ineq:Z_h_Z_star_Lp:second:Holder:second:1}
		\begin{split}
			\int_{s+h_n}^t\|f(\tau,&X^*(\tau),Z_\nu^{h_n}(\tau)\sharp \pp,u^*(\tau))\\&{}\hspace{20pt}- f(\tau,X^*(\tau),X^*(\tau)\sharp \pp,u^*(\tau))-{h_n}(f_m^*\diamond Y_\nu) (\tau,\omega)\|_{L^p}d\tau \\ \leq
			\bigg[\int_{s+h_n}^t &\int_\Omega\int_0^1\int_{\Omega}\int_0^1\|\varpi_m^n(\theta,r,\tau,\omega,\omega')\|^q dr\pp(d\omega')d\theta \pp(d\omega)\bigg]^{1/q}\\&{}\hspace{70pt}\|Z_\nu^h(\tau,\omega')-X^*(\tau,\omega')d\tau\|_{L^p,s,T}\\&+C_m
			\int_{s+h_n}^t\|Z_\nu^{h_n}(\tau)-X^*(\tau)-h_nY_\nu(\tau))\|_{L^p}d\tau.
		\end{split}
	\end{equation} Denote 
	\begin{equation}\label{intro:a_3}
		\begin{split}
			a_n^{(3)}\triangleq C_4T^{1/p}\bigg[\int_{s+h_n}^T \int_\Omega\int_0^1\int_{\Omega}\int_0^1\|\varpi_m(\theta,&r,\tau,\omega,\omega')\|^qdr\\&{}\pp(d\omega')d\theta \pp(d\omega)d\tau\bigg]^{1/q}.\end{split}
	\end{equation} Notice that
	\[\|\varpi_m^n(\theta,r,\tau,\omega,\omega')\|\leq 2C_m,\] while the sequence $\{h_n\}$ is such that 
	$Z^{h_n}(\tau,\omega)\rightarrow X^*(\tau,\omega)$ $\lambda\otimes \pp$-a.e. as $n\rightarrow\infty$. This, continuity of the function $\nabla_mf$, the fact that $\norm{Z^{h_n}_\nu(t)-X^*(t)}{L^p}$ converges to zero uniformly w.r.t. time and the very definition of the function $\varpi_{m}^n$ (see~\eqref{intro:y_n_2} and~\eqref{inro:varpi_m_n}) imply that $\varpi_{m}^n(\theta,r,\tau,\omega,\omega')\rightarrow 0$ as $n\rightarrow \infty$ $\lambda\otimes\lambda\otimes\lambda\otimes\pp\otimes\pp$-a.e. Hence, due to the dominated convergence theorem, we have that 
	\begin{equation}\label{convergence:a_3}
		a_n^{(3)}\rightarrow 0\text{ as }n\rightarrow\infty.
	\end{equation}
	
	Applying the H\"older inequality for the first term in  the right-hand side of~(\ref{ineq:Z_h_Z_star_Lp:second:Holder:second:1}), using definition~(\ref{intro:a_3}) and the third statement of Proposition~\ref{prop:Z_h_bounds}, we deduce that
	\begin{equation}\label{ineq:Z_h_Z_star_Lp:second:Holder:second:final}
		\begin{split}
			\int_{s+h_n}^t\|f(&\tau,X^*(\tau),Z_\nu^{h_n}(\tau)\sharp \pp,u^*(\tau))\\&{}\hspace{20pt}- f(\tau,X^*(\tau),X^*(\tau)\sharp \pp,u^*(\tau))-{h_n}(f_m^*\diamond Y_\nu) (\tau,\omega)\|_{L^p}d\tau \\ \leq
			&a_n^{(3)}h+C_m
			\int_{s+h_n}^t\|Z_\nu^{h_n}(\tau)-X^*(\tau)-h_nY_\nu(\tau))\|_{L^p}d\tau.
		\end{split}
	\end{equation}
	
	Combining~(\ref{ineq:Z_h_Z_star_Lp:triangle}),~(\ref{ineq:Z_h_Z_star_Lp:second:1}),~(\ref{ineq:Z_h_Z_star_Lp:second:Holder}),~(\ref{ineq:Z_h_Z_star_Lp:second:Holder:first:final}),~(\ref{ineq:Z_h_Z_star_Lp:second:Holder:second:final}), we arrive at the following fact:
	\[
	\begin{split}
		\frac{1}{h_n}\|Z_\nu^{h_n}(t)-&X^*(t)-h_n Y_\nu(t))\|_{L^p}\leq (a_n^{(1)}+a_n^{(2)}+a_n^{(3)})\\&+(C_x+C_m)
		\int_{s+h_n}^t\frac{1}{h_n}\|Z_\nu^{h_n}(\tau)-X^*(\tau)-h_nY_\nu(\tau))\|_{L^p}d\tau.
	\end{split}
	\]
	Applying to this estimate the Gronwall's inequality, we obtain
	\[\frac{1}{h_n}\|Z_\nu^{h_n}(t)-(X^*(t)+h_n Y_\nu(t))\|_{L^p}\leq (a_n^{(1)}+a_n^{(2)}+a_n^{(3)})e^{(C_x+C_m)(t-s)}.\] This and the fact that the sequences $\{a_n^{(1)}\}$, $\{a_n^{(2)}\}$, $\{a_n^{(3)}\}$ converge to zero (see~(\ref{intro:a_1_def}),~(\ref{intro:a_2}),~\eqref{convergence:a_3}) give the statement of the proposition.
\end{proof}


\subsection{Derivative of the perturbed running cost}\label{appendix:sub:integral}
The aim of this section is to give the proof of Proposition \ref{prop:derivative_integral}. It relies on the following auxiliary statement.

\begin{lemma}\label{lm:f_0_derivative_bounds} For every $r_1,r_2\in [s,T]$, $r_1<r_2$, one has that 
	\begin{itemize}	
		\item $f_{0,x}^*\in \Lpspace{q}{r_1}{r_2}{\rds}$, 
		\item $f_{0,m}^*\in\Lpspacedouble{q}{r_1}{r_2}{\rds}$. 
	\end{itemize}	
	Moreover, $\norm{f_{0,x}^*}{L^q,r_1,r_2}$ and $\norm{f_{0,m}^*}{L^q,r_1,r_2}$ are bounded uniformly w.r.t.~$r_1$ and~$r_2$.
\end{lemma}
\begin{proof}
	We consider only $f_{0,x}^*$. The case of $f_{0,m}^*$ is the same. 
	
	The fact that $f_{0,x}^*\in B([r_1,r_2]\times\Omega,\mathcal{B}_\lambda([r_1,r_2])\otimes\mathcal{F};\rd)$ follows from assumption~\ref{assumption:function} and the very definition of this function (see~\eqref{intro:f_0_x}). This definition together with  assumption~\ref{assumption:derivative_f_0} gives that 
	\[\|f_{0,x}^*(t,\omega)\|^q\leq C^0_x(1+\|X^*(t,\omega)\|^p+\|X^*(t)\|^p_{L^p}+\|u^*(t,\omega)\|^p).\] Using the first statement of Proposition~\ref{prop:Z_h_bounds} for $h=0$ and the fact that $u^*\in\up$, we obtain that
	\begin{equation*}\label{ineq:f_0_L_q}
		\begin{split}
			\|f_{0,x}^*\|_{L^q,r_1,r_2}^q&=\int_s^{s+h_n} \expect\|f_{0,x}^*(t,\omega)\|^q\pp(d\omega)dt\\&\leq \int_{r_1}^{r_2} \expect \Big[ C^0_x(1+\|X^*(t,\omega)\|^p+\|X^*(t)\|^p_{L^p}+\|u^*(t,\omega)\|^p)\Big]dt\\&\leq 
			C^0_x(T+2TC_0^p+\norm{u^*}{\up}^p)<+\infty.
		\end{split}
	\end{equation*}
	Thus, $\|f_{0,x}^*\|_{L^q,r_1,r_2}$ is bounded by a constant that does not depend on $r_1$,~$r_2$.
\end{proof}

\begin{proof}[Proof of Proposition \ref{prop:derivative_integral}] We split the proof into the five steps.
	\begin{enumerate}[label=Step \arabic*.]
		\item Notice that, $u^{h_n}_\nu(t)=u^*(t)$ when $t\notin [s,s+h_n]$, and $u^{h_n}_\nu(t)=\nu$ for $t\in [s,s+h]$. Moreover, $Z^{h_n}_\nu(t)=X^*(t)$ on $[0,s]$. Therefore,
		\begin{equation}\label{ineq:f_0_Z_h_X_star_step_1}
			\begin{split}
				\bigg|\bigg[\int_0^T\mathbb{E}[f_0&(t,Z^{h_n}_\nu(t),Z^{h_n}_\nu(t)\sharp \pp,u^{h_n}_\nu(t))dt\\&-\int_0^T\mathbb{E}f_0(t,X^*(t),X^*(t)\sharp \pp,u^*(t))]dt\bigg]\\ &-
				h_n\mathbb{E}\Delta^s_\nu f^*_0-h_n\int_s^T\mathbb{E}[f_{0,x}^*(t)Y_\nu(t)+ (f_{0,m}^*\diamond Y_\nu)(t)]dt\bigg| \\ \leq G^{(1)}_n&+G^{(2)}_n+G^{(3)}_n+G^{(4)}_n, 
			\end{split}
		\end{equation} where we denote
		\begin{equation}\label{intro:G_1_n}
			\begin{split}
				\hspace{-8pt}	G_n^{(1)}\triangleq\bigg|\int_s^{s+h_n} \mathbb{E}[f_0(t,&Z^{h_n}_\nu(t),Z^{h_n}_\nu(t)\sharp \pp,\nu)\\-f_0(&t,X^*(t),X^*(t)\sharp \pp,u^*(t))]dt-h_n\mathbb{E}\Delta^s_\nu f^*_0\bigg|,\end{split}
		\end{equation}
		\begin{equation}\label{intro:G_2_n}
			\hspace{-96pt}	G_n^{(2)}\triangleq h_n\int_s^{s+h_n}\mathbb{E}|f_{0,x}^*(t)Y_\nu(t)+ (f_{0,m}^*\diamond Y_\nu)(t)|dt,
		\end{equation}
		\begin{equation}\label{intro:G_3_n}
			\begin{split}
				G_n^{(3)}\triangleq \int_{s+h_n}^T \mathbb{E}|f_0&(t,Z^{h_n}_\nu(t),Z^{h_n}_\nu(t)\sharp \pp,u^*(t))\\-f&{}_0(t,X^*(t),Z^{h_n}_\nu(t)\sharp \pp,u^*(t))-h_nf_{0,x}^*(t)Y_{\nu}(t)|dt,
			\end{split}
		\end{equation}
		\begin{equation}\label{intro:G_4_n}
			\begin{split}
				G_n^{(4)}\triangleq \int_{s+h_n}^T \mathbb{E} |f_0(t,X^*(t),Z^{h_n}_\nu(t)\sharp \pp,u^*(t&))\\ -f_0(t,X^*(t),X^*(t)\sharp \pp,u^*(t&))-h_n( f_{0,m}^*\diamond Y_{\nu})(t)|dt.
			\end{split}
		\end{equation} 
		
		In the following, we will show that $G^{(i)}_n/h_n\rightarrow 0$ as $n\rightarrow\infty$.
		\item Now choose $t\in [s,s+h]$.
		Notice that
		\begin{equation}\label{ineq:G_1_n:sum_nu}
			\begin{split}
				|\mathbb{E}[f_0(t,&Z^{h_n}_\nu(t),Z^{h_n}_\nu(t)\sharp \pp,\nu)-f_0(s,X^*(s),X^*(s)\sharp \pp,\nu)]|\\ \leq \expect|&f_0(t,Z^{h_n}_\nu(t),Z^{h_n}_\nu(t)\sharp \pp,\nu)- f_0(t,X^*(t),Z^{h_n}_\nu(t)\sharp \pp,\nu)|\\+&\expect|f_0(t,X^*(t),Z^{h_n}_\nu(t)\sharp \pp,\nu)-f_0(t,X^*(t),X^*(t)\sharp \pp,\nu)|\\+&
				\expect|f_0(t,X^*(t),X^*(t)\sharp \pp,\nu)-f_0(s,X^*(s),X^*(s)\sharp \pp,\nu)|.
			\end{split}
		\end{equation}
		
		Since $s\in\mathcal{T}$, while $\nu\in\mathcal{N}$, we have that (see equality \eqref{equality:f_0_nu} in Proposition \ref{prop:N_T}) 
		\begin{equation}\label{ineq:G_1_n:1}
			\begin{split}
				a_n'\triangleq \frac{1}{h_n}\int_s^{s+h_n}\expect|&f_0(t,X^*(t),X^*(t)\sharp \pp,\nu)\\&-f_0(s,X^*(s),X^*(s)\sharp \pp,\nu)|dt\rightarrow 0\text{ as }n\rightarrow\infty.
			\end{split}
		\end{equation}

		Since $f_0$ is continuously differentiable w.r.t. $x$, we have that
		\[\begin{split}
			|f_0(&t,Z^{h_n}_\nu(t,\omega),Z^{h_n}_\nu(t)\sharp \pp,\nu(\omega))-f_0(t,X^*(t,\omega),Z^{h_n}_\nu(t)\sharp \pp,\nu(\omega))| \\ &\leq 
			\int_{0}^1 \|\nabla_x f_0(t,X^*(t)+r(Z^{h_n}_\nu(t)-X^*(t)),Z^{h_n}_\nu(t)\sharp \pp,\nu)\|\\ &{}\hspace{210pt} \|Z^{h_n}_\nu(t)-X^*(t)\|dr.
		\end{split}\]
		
		Using the H\"older's inequality, we obtain
		\begin{equation*}
			\begin{split}
				\expect|&f_0(t,Z^{h_n}_\nu(t),Z^{h_n}_\nu(t)\sharp \pp,\nu)-f_0(t,X^*(t),Z^{h_n}_\nu(t)\sharp \pp,\nu)| \\ &\leq 
				\expect\bigg[\int_{0}^1 \|\nabla_x f_0(t,X^*(t)+r(Z^{h_n}_\nu(t)-X^*(t)),Z^{h_n}_\nu(t)\sharp \pp,\nu)\|dr\\&{}\hspace{210pt}\|Z^{h_n}_\nu(t)-X^*(t)\|\bigg] \\ &\leq 
				\bigg[\expect\int_{0}^1 \|\nabla_x f_0(t,X^*(t)+r(Z^{h_n}_\nu(t)-X^*(t)),Z^{h_n}_\nu(t)\sharp \pp,\nu)\|^qdr\bigg]^{1/q}\\&{}\hspace{210pt}\|Z^{h_n}_\nu(t)-X^*(t)\|_{L^p}.
			\end{split}\
		\end{equation*} Thanks to assumption~\ref{assumption:derivative_f_0} and Proposition~\ref{prop:Z_h_bounds}, we conclude
		\begin{equation}\label{ineq:G_1_n:2}
			\begin{split}
				\expect|f_0(t,Z^{h_n}_\nu(t),Z^{h_n}_\nu(t)\sharp \pp,&\nu)-f_0(t,X^*(t),Z^{h_n}_\nu(t)\sharp \pp,\nu)|\\& \leq  
				[C_x^0(1+2C_0^p+\norm{\nu}{L^p}^p)]^{1/q}C_2h_n.
			\end{split}\
		\end{equation}
		
		Since $f_0$ is continuously differentiable w.r.t. $m$, the following estimate holds true $\pp$-a.s.:
		\[\begin{split}
			|f_0(&t,X^*(t,\omega),Z^{h_n}_\nu(t)\sharp \pp,\nu(\omega))-f_0(t,X^*(t,\omega),X^*(t)\sharp \pp,\nu(\omega))|\\ &=
			\bigg|\int_0^1\int_\Omega\bigg[\frac{\delta f_0}{\delta m}(t,X^*(t,\omega),m^n(\theta,t),Z^{h_n}_\nu(t,\omega'),\nu(\omega))\\&{}\hspace{50pt}-\frac{\delta f_0}{\delta m}(t,X^*(t,\omega),m^n(\theta,t),X^*(t,\omega'),\nu(\omega))\bigg] \pp(d\omega')d\theta\bigg| \\ &\leq
			\int_0^1\int_\Omega\int_0^1 \|\nabla_m f_0(t,X^*(t,\omega),m^n(\theta,t),y^n_3(r,t,\omega'),\nu(\omega))\|\\ &{}\hspace{140pt}\|Z^{h_n}_\nu(t,\omega')-X^*(s,\omega')\|dr\pp(d\omega')d\theta.
		\end{split}
		\] Above we denoted
		\[y^n_3(r,t,\omega')\triangleq X^*(t,\omega')+r(Z^{h_n}_\nu(t,\omega')-X^*(t,\omega'))\]
		\[m^n(\theta,t)\triangleq \theta Z_\nu^{h_n}(t)\sharp \pp+(1-\theta)X^*(t)\sharp \pp.\] Applying the H\"older inequality, we obtain that
		\[\begin{split}
			\int_{\Omega}|&f_0(t,X^*(t,\omega),Z^{h_n}_\nu(t)\sharp \pp,\nu(\omega))-f_0(t,X^*(s,\omega),X^*(t)\sharp \pp,\nu(\omega))|\pp(d\omega) \\ &\leq 
			\bigg[\int_\Omega\int_0^1\int_\Omega\int_0^1 \|\nabla_m f_0(t,X^*(s,\omega),m^{n}(t,\theta),y^n_3(r,t,\omega'),\nu(\omega))\|^q\\&{}\hspace{130pt}dr\pp(d\omega')d\theta \pp(d\omega)\bigg]^{1/q}\|Z^{h_n}_\nu(t)-X^*(t)\|_{L^p}.
		\end{split}
		\] Using the estimates from assumption~\ref{assumption:derivative_f_0}, Proposition~\ref{prop:Z_h_bounds}, we deduce the following inequality
		\begin{equation}\label{ineq:G_1_n:3}
			\begin{split}
				\expect|f_0(t,X^*(t),Z^{h_n}_\nu&(t)\sharp \pp,\nu)\\&-f_0(t,X^*(t,\omega),X^*(t)\sharp \pp,\nu)| \\ \leq [C_m^0(1&+3C_0^p+\|\nu\|_{L^p}^p)]^{1/q}C_2h_n.
			\end{split}
		\end{equation} Combining~(\ref{ineq:G_1_n:sum_nu})--(\ref{ineq:G_1_n:3}), we have that, for $t\in [s,s+h_n]$,
		\begin{equation}\label{ineq:G_1_n:sum_nu_final}
			\begin{split}
				\frac{1}{h_n}\int_s^{s+h_n}|\mathbb{E}[f_0(t,&Z^{h_n}_\nu(t),Z^{h_n}_\nu(t)\sharp \pp,\nu)\\&-f_0(s,X^*(s),X^*(t)\sharp \pp,\nu)]|\leq a_n^{(4)},
			\end{split}
		\end{equation} where
		\begin{equation}
			\begin{split}
				a_n^{(4)}\triangleq a_n'&+[C_m^0(1+2C_0^p+\norm{\nu}{L^p}^p)]^{1/q}C_1h_n\\&+[C_x^0(1+3C_0^p +\norm{\nu}{L^p}^p)]^{1/q}C_1h_n.\end{split}
		\end{equation} Notice that the sequence $\{a_n^{(4)}\}_{n=1}^\infty$ converges to 0.

		Therefore, recalling the definition of $\Delta^s_\nu f^*_0$, we obtain the estimate
		\begin{equation}\label{ineq:G_1_n_prefinal}
			\begin{split}
				G_n^{(1)}\leq \int_s^{s+h_n}  \expect|f_0(t,X^*(t),X^*(&t)\sharp \pp,u^*(t))\\-f_0(&s,X^*(s),X^*(s)\sharp \pp,u^*(s))|dt\\+a_n^{(4)}\cdot h_n&.
			\end{split}
		\end{equation} This and  equality \eqref{equality:f_0_star} in Proposition~\ref{prop:N_T} imply that 
		$G_n^{(1)}/h_n\rightarrow 0$ as $n\rightarrow\infty$.

		
		\item Let us estimate
		\[\int_s^{s+h_n}\expect|f_{0,x}^*(t)Y_\nu(t)+ (f_{0,m}^*\diamond Y_\nu) (t)|dt.\] 
		We have that
		\begin{equation}\label{ineq:G_2_X_f_0_Y}
			\int_s^{s+h_n} \expect|f_{0,x}^*(t)Y_\nu(t)|dt\leq \|f_{0,x}^*\|_{L^q,s,s+h_n}\cdot \|Y_\nu\|_{L^p,s,s+h_n}.
		\end{equation} By Lemma~\ref{lm:f_0_derivative_bounds}, the values $\|f_{0,x}^*\|_{L^q,s,s+h_n}$ are uniformly bounded. 
		Furthermore, due to Proposition~\ref{prop:Y_nu} and~\eqref{ineq:norm_L^p_sup}, 
		\begin{equation}\label{ineq:G_2_X_Y_nu}
			\|Y_\nu\|_{L^p,s,s+h_n}\leq (h_n)^{1/p}C_3.
		\end{equation}
		Combining this,~\eqref{ineq:G_2_X_f_0_Y} and~\eqref{ineq:G_2_X_Y_nu} we arrive at the estimate
		\begin{equation}\label{ineq:G_2_X_prefinal}
			\int_s^{s+h_n} \expect|f_{0,x}^*(t)Y_\nu(t)|\pp(d\omega)dt\leq C_5h_n^{1/p},
		\end{equation} where
		$C_5$ is a constant.
		
		Analogously, we have
		\[\begin{split}
			\int_s^{s+h_n}\expect&|(f_{0,m}^*\diamond Y_\nu)(t)|dt\\&\leq 
			\int_s^{s+h_n}\int_\Omega\int_\Omega| f_{0,m}^*(t,\omega,\omega')Y_\nu(t,\omega')|\pp(d\omega')\pp(d\omega)dt
			\\&\leq \|f_{0,m}^*\|_{L^q,s,s+h_n}\cdot \|Y_\nu\|_{L^p,s,s+h_n}.\end{split}\] Using Lemma~\ref{lm:f_0_derivative_bounds}, we obtain that 
		$\|f_{0,m}^*\|_{L^q,s,s+h_n}$ are uniformly bounded. This and~\eqref{ineq:G_2_X_f_0_Y} give the estimate
		\[\int_s^{s+h_n}\expect|(f_{0,m}^*\diamond Y_\nu)(t)|dt\leq C_6h_n^{1/p},\] where $C_6$ is a constant (certainly dependent on $(X^*,u^*)$). Using this inequality and~\eqref{ineq:G_2_X_prefinal}, we conclude that
		\[\int_s^{s+h_n}\expect\big|f_{0,x}^*(t)Y_\nu(t)+ (f_{0,m}^*\diamond Y_\nu) (t)\big|dt\leq (C_5+C_6)h_n^{1/p}.\] 
		Therefore, $G_n^{(2)}$ defined by~(\ref{intro:G_2_n}) is such that
		\[G_n^{(2)}/h_n\rightarrow 0\text{ as }n\rightarrow\infty.\]
		
		\item We have that
		\[\begin{split}
			f_0(t,Z^{h_n}_\nu(t),&Z^{h_n}_\nu(t)\sharp \pp,u^*(t))-f_0(t,X^*(t),Z^{h_n}_\nu(t)\sharp \pp,u^*(t)) \\ =\int_0^1 \nabla_x f_0&(t,y^n_4(r,t),Z^{h_n}_\nu(t)\sharp \pp,u^*(t))(Z^{h_n}_\nu(t)-X^*(t))dr.
		\end{split}\] Here we use the designation
		\[y^n_4(r,t,\omega)\triangleq	X^*(t)+r(Z^{h_n}_\nu(t)-X^*(t))\]  omitting the dependence on $\omega$.
		Denote
		\[\begin{split}
			\varpi_{0,x}^n(r,t,\omega)\triangleq \nabla_x f_0(t,y^n_4(r,t,&\omega),Z^{h_n}_\nu(t)\sharp \pp,u^*(t,\omega))\\-\nabla_x f_0&(t,X^*(t,\omega),Z^{h_n}_\nu(t)\sharp \pp,u^*(t,\omega)).
		\end{split}\] 
		Therefore,
		\begin{equation}\label{ineq:G_3_n_first}\begin{split}
				G^{(3)}_n\leq \expect \int_{s}^T\int_0^1 |\varpi_{0,x}^n(r&,t)(Z^{h_n}_\nu(t)-X^*(t,\omega))|dr dt\\+
				\int_{s}^T\expect |f_{0,x}^*(t&) (Z^{h_n}_\nu(t)-X^*(t)-h_nY_\nu(t))|dt \\ \leq 
				\Bigg[\expect\int_{s}^T\int_0^1 \|\varpi_{x}^{0,n}&(r,t)\|^qdr dt\Bigg]^{1/q}\|Z^{h_n}_\nu-X^*\|_{L^p,s,T}\\+
				\|&f_{0,x}^*\|_{L^q,s,T} \|Z^{h_n}_\nu-X^*-h_nY_\nu\|_{L^p,s,T}.
			\end{split}
		\end{equation}
		
		Notice that, due to the choice of the sequence $\{h_n\}_{n=1}^\infty$, $Z^{h_n}_\nu\rightarrow X^*$ $\lambda\otimes \pp$-a.e. Therefore,  $\varpi_{0,x}^n$ converges to zero $\lambda\otimes\lambda\otimes\pp$-a.e. as $n\rightarrow\infty$. Moreover, $\pp$-a.s.
		\[\begin{split}
			\|\varpi_{0,x}^n(r,t&)\|\\\leq \|\nabla_x &f_0(t,X^*(t)+r(Z^{h_n}_\nu(t)-X^*(t)),Z^{h_n}_\nu(t)\sharp \pp,u^*(t))\|\\&+\|\nabla_x f_0(t,X^*(t),Z^{h_n}_\nu(t)\sharp \pp,u^*(t))\|.
		\end{split}\] Using assumption~\ref{assumption:derivative_f_0}, the Jensens's inequality, Proposition~\ref{prop:Z_h_bounds} and the fact that $u^*\in\up$, we obtain that
		\[\expect\int_{s}^T\int_{0}^1\|\varpi_{0,x}^n(r,t)\|^qdrdt\leq 2 C_x^0(1+2 C_0+\norm{u^*}{\up})<+\infty.\] Therefore, by the dominated convergence theorem
		\[\expect\int_{s}^T\int_{0}^1\|\varpi_{0,x}^n(r,t)\|^qdrdt\rightarrow 0\text{ as }n\rightarrow\infty.\] Furthermore, by the third statement of Proposition ~\ref{prop:Z_h_bounds} and~\eqref{ineq:norm_L^p_sup},
		\[\|Z^{h_n}_\nu(t)-X^*(t)\|_{L^p}\leq T^{1/p}C_1h_n.\] 
		
		Additionally, by Lemma~\ref{lm:f_0_derivative_bounds},
		$\|f_{0,x}^*\|_{L^q,s,T}<+\infty$. Finally, thanks to Proposition~\ref{prop:derivative_y_h} and~(\ref{ineq:norm_L^p_sup}), 
		$ \|Z^{h_n}_\nu(t)-X^*(t)-h_nY_\nu(t)\|_{L^p,s,T}/h_n\rightarrow 0$ as $n\rightarrow\infty$.
		
		Combining the above estimates of the right-hand side of estimate~(\ref{ineq:G_3_n_first}), we conclude that 
		\[\frac{G^{(3)}_n}{h_n}\rightarrow 0\text{ as }n\rightarrow\infty.\]

		\item 
		As above, we have that, for $\pp$-a.e. $\omega\in\Omega$,
		\[\begin{split}
			f_0(t,&X^*(t,\omega),Z^{h_n}_\nu(t)\sharp \pp,u^*(t,\omega))-f_0(t,X^*(t,\omega),X^*(t)\sharp \pp,u^*(t,\omega)) \\ &=
			\int_0^1\int_\Omega\int_0^1\nabla_m f_0(t,X^*(t,\omega),m^n(t,\theta), y^n_5(r,t,\omega'),u^*(t,\omega))\\ &{}\hspace{260pt}dr\pp(d\omega')d\theta,
		\end{split}
		\]
		where we denote
		\[y^n_5(r,t,\omega')=X^*(t,\omega')+r(Z_\nu^{h_n}(t,\omega')-X^*(t,\omega')),\]
		\[m^n(t,\theta)\triangleq \theta Z_\nu^{h_n}(t)\sharp \pp+(1-\theta)X^*(t)\sharp \pp.\]
		Arguing as in the proof of estimate~(\ref{ineq:Z_h_Z_star_Lp:second:Holder:second:final}), we have
		\[\begin{split}
			\expect\int_{s}^T| f_0&(t,X^*(t),Z^{h_n}_\nu(t)\sharp \pp,u^*(t))\\&{}\hspace{50pt}-f_0(t,X^*(t),X^*(t)\sharp \pp,u^*(t))-h_n(f_{0,m}^*\diamond Y_\nu)(t)|dt \\ \leq
			\bigg[&\int_\Omega\int_{s}^T\int_0^1\int_\Omega\int_0^1 \|\varpi_{0,m}^n(\theta,r,t,\omega,\omega')\|^qdr\pp(d\omega')d\theta d t\pp(d\omega)\bigg]^{1/q}\\ \\&{}\hspace{210pt} \|Z_\nu^{h_n}-X^*\|_{L^p,s,T}\\ &{}\hspace{60pt}+
			\norm{f_{0,m}^*}{L^q,s,T}\norm{Z_\nu^{h_n}(t)-X^*(t)-h_nY_\nu(t)}{L^p,s,T},
		\end{split}\]
		Here we put
		\[\begin{split}
			\varpi_{0,m}^n(\theta,r,t,&\omega,\omega')\\\triangleq \nabla_m &f_0(t,X^*(t,\omega),m^n(t,\theta), y^n_5(r,t,\omega'),u^*(t,\omega))\\&-
			\nabla_m f_0(t,X^*(t,\omega),X^*(t)\sharp \pp, X^*(t,\omega'),u^*(t,\omega)).\end{split}\]
		Furthermore, $\{\varpi_{0,m}^n\}$ converges to zero $\lambda\otimes\lambda\otimes\lambda\otimes\pp\otimes\pp$-a.e., and the functions $\|\varpi_{0,m}^n\|^q$ are uniformly integrable (here we use the same arguments as in Step 3).
		Thus, due to the dominated convergence theorem,
		\[\begin{split}
			\int_\Omega\int_{s+h_n}^T\int_0^1\int_\Omega\int_0^1 \|\varpi_{0,m}^n(\theta,r,t,\omega,\omega')\|^qdr\pp(d\omega')d\theta d t&\pp(d\omega)\rightarrow 0\\&\text{ as }n\rightarrow\infty.\end{split}\] Recall that the third statement of Proposition~\ref{prop:Z_h_bounds} says that 
		$\|Z_\nu^{h_n}(t)-X^*(t)\|_{L^p}\leq C_2 h_n$. Moreover, by Lemma~\ref{lm:f_0_derivative_bounds}, $\norm{f_{0,m}^*}{L^q,s,T}<+\infty$. Finally, Proposition~\ref{prop:derivative_y_h} states that $\|Z_\nu^{h_n}(t)-X^*(t)-h_nY_\nu(t)\|_{L^p}/h_n$ tends to $0$ uniformly w.r.t time variable. This and~(\ref{ineq:norm_L^p_sup}) give that 
		\[\frac{1}{h_n}\norm{Z_\nu^{h_n}(t)-X^*(t)-h_nY_\nu(t)}{L^p,s,T}\rightarrow 0\text{ as }n\rightarrow\infty.\]
		Therefore, $G^{(4)}_n/h_n$ tends to 0 when $n\rightarrow \infty$.
		
	\end{enumerate}
	Steps 1--5 imply that
	\[\begin{split}
		\lim_{n\rightarrow\infty}\bigg|\frac{1}{h_n}\bigg[\int_0^T\mathbb{E}[f_0(t,&Z^{h_n}_\nu(t),Z^{h_n}_\nu(t)\sharp \pp,u^{h_n}_\nu(t))dt\\-&\int_0^T\mathbb{E}f_0(t,X^*(t),X^*(t)\sharp \pp,u^*(t))]dt\bigg]\\-&
		\mathbb{E}\Delta^s_\nu f^*_0+\int_s^T\mathbb{E}[f_{0,x}^*(t,\omega)Y_\nu(t,\omega)+ (f_{0,m}^*\diamond Y_\nu)(t)]dt\bigg|\\
		\leq\frac{1}{h_n}(G_n^{(1)}&+G_n^{(2)}+G_n^{(3)}+G_n^{(4)})\rightarrow 0\text{ as } n\rightarrow\infty.
	\end{split}\] This completes the proof.
\end{proof}

\newpage
\phantomsection
\addcontentsline{toc}{section}{References}

\end{document}